\theoremstyle{plain}
\newtheorem{theorem}{Theorem}[section]
\newtheorem{corollary}[theorem]{Corollary}
\newtheorem{proposition}[theorem]{Proposition}
\newtheorem{lemma}[theorem]{Lemma}
\theoremstyle{definition}
\newtheorem{remark}[theorem]{Remark}
\newtheorem{example}[theorem]{Example}
\newtheorem{definition}[theorem]{Definition}
\newcommand{\eps}{\varepsilon}
\newcommand\trnorm[1]{\left\|\kern-1.2pt\left|#1\right\|\kern-1.2pt\right|}
 \DeclareMathOperator{\sign}{sign}
 \DeclareMathOperator{\supp}{supp}
\begin{document}
\title[New formulas for decreasing rearrangements]{New formulas for decreasing rearrangements and a class of Orlicz-Lorentz spaces}

\author[Kami\'nska]{Anna Kami\'nska}
\address[Kami\'nska]{Department of Mathematics\\
University of Memphis\\ Memphis, USA}
\email{\texttt{kaminska@memphis.edu}}

\author[Raynaud]{Yves Raynaud}
\address[Raynaud]{Institut de Math\'ematiques de Jussieu, Universit\'e Paris 06-UPMC and CNRS, 4 place Jussieu, F-75252 Paris cedex 05, France}
\email{\texttt{yves.raynaud@upmc.fr}}

\keywords{decreasing rearrangement, Hardy's formulas, dual spaces, Orlicz-Lorentz spaces}

\thanks {2010\emph{ Mathematics Subject Classification.}\ {26D07, 39B62, 42B25, 46B10, 46E30}}

\date{\today}

\begin{abstract}   Using a nonlinear version of the well known Hardy-Littlewood inequalities, we derive new formulas for decreasing rearrangements of functions and sequences in the context of convex functions. We use these formulas for deducing several properties of the modular functionals defining the  function and sequence spaces $M_{\varphi,w}$ and $m_{\varphi,w}$ respectively, introduced earlier in \cite{HKM} for describing the K\"othe dual of ordinary  Orlicz-Lorentz  spaces  in a large variety of cases ($\varphi$ is an Orlicz function and $w$ a {\it decreasing} weight). We study these $M_{\varphi,w}$ classes in the most general setting, where they may even not be linear, and identify their K\"othe duals with  ordinary (Banach) Orlicz-Lorentz  spaces. We introduce a new class of rearrangement invariant Banach spaces $\mathcal{M}_{\varphi,w}$ which proves to be the K\"othe biduals of the $M_{\varphi,w}$ classes. In the case when the class $M_{\varphi,w}$ is a separable quasi-Banach space, $\mathcal{M}_{\varphi,w}$ is its Banach envelope.

\end{abstract}
\maketitle

\section{Introduction and Preliminaries}\label{prelim}

The theories of Lorentz, Orlicz-Lorentz or Marcinkiewicz spaces have been developed on the ground of the concept of decreasing rearrangement, submajorization and maximal functions. A basic tool in this domain are
the Hardy-Littlewood inequalities and equations which are thoroughly discussed in several papers and monographs \cite{BS, KPS}.  In this paper we study special Orlicz-Lorentz classes which were introduced for the purpose of expliciting the structure of the K\"othe duals of a large variety of "classical" Orlicz-Lorentz spaces \cite{HKM}. It turns out that in this context a certain kind of ``generalized Hardy-Littlewood inequalities'', in the spirit of those introduced in the fifties by G. G. Lorentz \cite{Lo}, are particularly useful.

Let's first introduce basic notions, definitions, symbols and facts needed later.
As usual by $\mathbb{R}$, $\mathbb{R}_+$ and $\mathbb{N}$ we denote the set of all real, non-negative real and natural numbers, respectively.
Let $I=(0,a), 0<a\le \infty$.    By $L^0$ denote the set of all Lebesgue measurable functions $f: I\rightarrow \mathbb{R}$.  Given $f\in L^0$ define its {\it decreasing rearrangement} as
\[
f^*(t)  = \inf |\{s\in I: d_f(s) \le t\}|, \ \ \ t\in I,
\]
where $d_f(s) = |\{t\in I: |f(t)| > s\}|$, $s\ge 0$.  Analogously, if
$x = \{x(n)\}$ is a sequence of real numbers then $x^* = \{x^*(n)\}$, where
\[
x^*(n)= \inf \{s>0: d_x(s) \le n-1\}, \ \ \  n\in \mathbb{N},
\]
 and $d_x(s) = |\{k\in \mathbb{N}: |x(k)| > s\}|$, $s \ge 0$.  Given two functions $f, g\in L^0$, or respectively two sequences $x, y$, we write $f\sim g$, respectively
$x\sim y$, whenever $f^*=g^*$, respectively $x^*=y^*$.

Given $f,g\in L^0$, we say that $g$ is {\it submajorized} by $f$ and write $g\prec f$, whenever
\[
\int_0^t g^* \le \int_0^t f^* \ \ \  \text{for all}\ \  t\in I.
\]
Similarly for sequences $x=\{x(n)\}$, $y=\{y(n)\}$ we write $y\prec x$, if
\[
\sum_{n=1}^m y^*(n) \le \sum_{n=1}^m x^*(n) \ \ \ \text{for all} \ \ \ m\in\mathbb{N}.
\]
By $|A|$ we shall denote the cardinality of $A\subset \mathbb{N}$
or the Lebesgue measure of $A$ for $A\subset I$. Recall that a function $\tau: I\rightarrow I$ is a {\it measure
preserving transformation} \cite{BS} if for every measurable set $A\subset
I$ the set $\tau^{-1}(A) = \{t\in (0,a): \tau(t) \in A\}$ is
measurable and $|\tau^{-1}(A)| = |A|$. It is well known that for any two measurable sets $A, B \subset I$ with $|A| = |B|$ there is a measure preserving transformation $\tau : A \to B$ which is measurable, one-to-one and onto function \cite{Roy}.
Any one-to-one and onto function of $\mathbb{N}$ will be called {\it automorphism} of $\mathbb{N}$, and obviously  it preserves the measure of $A\subset \mathbb{N}$.

The space $(E, \|\cdot\|_E)$ is called a (quasi) Banach function space if $E\subset L^0$, $\|\cdot\|_E$ is a (quasi) norm and whenever $f\in E$, $g\in L^0$ and
$|g| \le |f|$ then $g\in E$ and $\|g\|_E\le \|f\|_E$. We say that a Banach function space $E$ is a rearrangement invariant (r.i.) function space whenever  $f\in E$ yields that $f^*\in E$ and $\|f\|_E = \|f^*\|_E$.
We say that $(E, \|\cdot\|_E)$ satisfies the {\it Fatou property} if for any $f_n \in E$ if $f_n\uparrow f$ a.e. and $\sup_n \|f_n\|_E < \infty$ then $f\in E$ and $\|f_n\|_E \uparrow \|f\|_E$.  The space $E$ is said to be {\it order continuous} if $\|f_n\|_E \downarrow 0$ for any $f_n\in E$ such that $f_n\downarrow 0$ a.e. Analogously $(E, \|\cdot\|_E)$ is called a (quasi) Banach sequence space or rearrangement invariant sequence space if $E$ is a subspace of all real sequences and has the similar properties as analogous function spaces. For information on Banach function or sequence spaces we refer to \cite{BS, KA, KPS, LT2, Z} and for quasi-Banach spaces to \cite{KM, KPR, KR}.

 The terms decreasing or increasing will stand for non-increasing or non-decreasing, respectively.  A function $w\in L^0$ is called a {\it weight function} whenever it is non-negative and  decreasing.  We set $W(t)= \int_0^t w $ for all $t\in I$. The function $W$ is either everywhere infinite (except at 0) or everywhere finite on $I$.
  Similarly $w= \{w(n)\}$ is a {\it weight sequence} if it is non-negative  and
decreasing. Let also $W(n) = \sum_{i=1}^n w(i)$, $n\in \mathbb{N}$.  Notice that the function $W(t)/t$ on $I$, and the sequence  $\{W(n)/n\}$ are decreasing. It is said that the weight function or the weight sequence $w$ is {\it regular} if there exists $C>0$ such that $W(u) \le Cuw(u)$, for all $u\in I$ or $u\in \mathbb{N}$, respectively.

That weight functions and sequences are assumed to be decreasing will be recalled only in the statements of important results.

The symbol $\varphi$ throughout the paper stands for an {\it Orlicz function} that is  $\varphi:\Bbb R_+ \rightarrow \Bbb R_+$  is convex, strictly increasing and $\varphi(0)=0$.  It follows that $\varphi(t)/t$ is increasing and $\varphi(c/t)t$ is decreasing with respect to $t>0$, for any $c>0$. It is said that $\varphi$ satisfies {\it condition $\Delta_2$}
whenever there exists $K>0$
such that $\varphi(2u) \le K \varphi(u)$ for all $u\ge 0$.
Let $\varphi_*(t) = \sup_{s>0} \{st - \varphi(s)\}$, $t\ge 0$, be the {\it complementary} function of $\varphi$. An Orlicz function $\varphi$ is called {\it $N$-function} whenever $\lim_{t\to\infty} \varphi(t)/t = \infty$ and $\lim_{t\to 0} \varphi(t)/t = 0$. It is well known that the conjugate  $\varphi_*$ of an $N$-function $\varphi$ is also an $N$-function \cite{KrRut}. We say that two Orlicz functions $\varphi_1$ and $\varphi_2$ are equivalent if for some $C>0$, $\varphi_1(C^{-1}t) \le \varphi_2(t) \le \varphi_1(Ct)$ for all $t\ge 0$.

We also say that two expressions $A$ and $B$ (not both Orlicz functions) are equivalent, whenever there exists $C>0$ such that
$C^{-1}A \le B \le C A$.

In the context of the work presented here let's agree on the following convention. Given $f, v \ge 0$ on $I$, if $v(t) = 0$ then define

 \[
  \varphi\left(\frac{|f(t)|}{v(t)}\right) v(t)=  \begin{cases}
     &  0\ \  \text{if}\ \ f(t)=0; \\
    &  \infty\ \  \text{if}\ \ f(t)\neq 0.\\
   \end{cases}
  \]
Similarly as above, for sequences $v=\{v(n)\}\ge 0$ and $x=\{x(n)\}$,  if $v(n) =0$ define
 \[
  \varphi\left(\frac{|x(n)|}{v(n)}\right) v(n)=  \begin{cases}
     &  0\ \  \text{if}\ \ x(n)=0; \\
    &  \infty\ \  \text{if}\ \ x(n)\neq 0.\\
   \end{cases}
  \]
For any function $f\in L^0$ and sequence $\{x(n)\}$  let
\[
M(f) = \int_I \varphi\left(\frac{f^*}{w}\right) w \ \  \text{and}\ \ \
m(x) = \sum_{n=1}^\infty \varphi\left(\frac{x^*(n)}{w(n)}\right) w(n).
\]

Let's now discuss the results of this paper.
In section \ref{sec:New formulas} we discuss new rearrangement invariant formulas, expressing $M(f)$ or $m(x)$ in an equivalent way, in the spirit of Hardy-Littlewood formulas \cite[pp. 63-75]{KPS}, \cite[Chapter 2, sec. 2-3]{BS}.
Recall that the basic Hardy-Littlewood formulas state that for any $f,g\in L^0$, $\int_I f g \le \int_I f^*g^*$, and in fact we have that $\int_I f^* g^* = \sup_{g\sim v} \int_I |f| |v|$. There exists also a parallel formula if one of the function is increasing and the other is decreasing. So if $h\ge 0$ is increasing then $\int_I f^* h  = \inf_{g\sim h} \int_I |f| |g|$.  On the other hand G. G. Lorentz \cite{Lo} extended the Hardy-Littlewood inequalities as $\int_I \Phi(f ,g) \le \int_I \Phi(f^*,g^*)$, where the interval $I$ is finite and $\Phi: \mathbb R_+^2\to\mathbb R$ belongs to a certain class of continuous functions which are called nowadays ``supermodular''. This means that, for the natural lattice structure of  $\mathbb R^2$:
\[\forall x,y\in  \mathbb R_+^2,  \Phi(x\vee y)+\Phi(x\wedge y)\ge \Phi(x)+\Phi(y)\]
These ``generalized Hardy-Littlewood inequalities'' were extended in the last decade to $n$-dimensional context and to other kinds of rearrangement procedures, with applications to the theory of optimization and partial differential systems.

We show first that given an Orlicz function $\varphi$ and a weight function $w$,  $M(f) \le \int\varphi(|f|/v)v$ for any $v\sim w$, $v\ge 0$, and then that $M(f)= \inf_{v\sim w} \int\varphi(|f|/v)v $.  The work is conducted initially for finite sequences, then extended to infinite sequences and eventually to function case.  The link with the above cited result by G.G. Lorentz is the fact that the function $(s,t)\mapsto -\varphi(\frac st)t$ is supermodular on the interior of  $\mathbb R_+^2$. However it takes value $\infty$ on the semi-axis $\{t=0\}$. For this reason and for the commodity of the reader, our proof is self-contained and does not refer to \cite{Lo}. Next, under additional assumption of $w$ being "controlled by $\varphi$", in particular under regularity of $w$ and $\Delta_2$ condition of $\varphi$,  we refine the approximation of $M(f)$ by similar infimum  expression limited to positive $v$ and such that $v\sim w$.   We close the section showing another formula for $M(f)$ that states $M(f)= \inf_{v^*\le w} \int\varphi(|f|/v)v $. Analogous formulas are also proved for $m(x)$.

By $M_{\varphi,w}$  and $m_{\varphi,w}$ denote the class of all functions  $f\in L^0$ and
sequences $x=\{x(n)\}$, such that for some $\lambda>0$, $M(\lambda f)<\infty$ and
$ m(\lambda x)<\infty$, respectively. Given $f\in M_{\varphi,w}$ and $x\in m_{\varphi, w}$, define
\[
\|f\|_M = \inf\left\{\epsilon > 0: M\left(f/\epsilon\right) \le 1\right\}
 \ \ \ \text{and}\ \ \
 \|x\|_m = \inf\left\{\epsilon > 0: m\left(x/\epsilon\right) \le 1\right\}.
\]
 Note that these classes are never trivial.  They contain indicator functions of integrable sets, and more generally bounded measurable functions with supports of finite measure. Indeed the function $\varphi(1/w)w$ is increasing on $I$, and thus $\int_0^t\varphi(1/w)w<\infty$ for every $t\in I$. The notations $\|\cdot \|_M$ and $\|\cdot\|_m$ should not be misleading since these functionals are in general neither norms nor equivalent to norms. They are not even necessarily equivalent to quasi-norms, and the classes $M_{\varphi,w}$  and $m_{\varphi,w}$ may  not be even linear spaces. Nevertheless the set of positive decreasing elements in these classes is a convex cone, on which the functionals $\|\cdot \|_M$ and $\|\cdot \|_m$ are convex. Section \ref{sec:Classes M} is devoted to investigate these classes.

The classes $M_{\varphi,w}$ and $m_{\varphi,w}$ appeared for the first time in \cite{HKM} as duals of Orlicz-Lorentz spaces. It was proved that under assumptions of regularity of $w$, if $\varphi$ is an $N$-function then the K\"othe dual $(\Lambda_{\varphi,w})'$ of Orlicz-Lorentz space $\Lambda_{\varphi,w}$ coincides with $M_{\varphi_*,w}$ as sets and the homogeneous functional $\|\cdot\|_{M_{\varphi_*,w}}$ is equivalent to the dual norm in $(\Lambda_{\varphi,w})'$. In this section we study the more general case where $w$ is no more assumed to be regular, nor integrable on finite intervals. In this wider context it may happen that the  functionals $\|\cdot \|_M$ and $\|\cdot \|_m$ are not quasinorms.  We show that in this case the classes $M_{\varphi,w}$, $m_{\varphi,w}$ are not even closed under addition. This phenomenon was first described for the case of generalized Lorentz spaces in the paper \cite{CKMP}. A sufficient condition for these classes to be quasi-normed Banach spaces is that $1/w$ satisfies condition $\Delta_2$. This condition is also necessary when $\varphi$ has lower index  $\alpha_\varphi>1$. But even when this is not the case, it happens that these classes have nontrivial K\"othe duals provided $w$ is integrable on finite intervals, and these duals are precisely the ordinary Orlicz-Lorentz spaces $\Lambda_{\varphi_*,w}$ corresponding to the conjugate Orlicz function $\varphi_*$. This may be considered as a kind of generalization of the main result of \cite{HKM}.

In section \ref{sec:Spaces P}, inspired by the formulas proved in section \ref{sec:New formulas},  for any Orlicz function $\varphi$ and a weight $w$  we introduce a new class of function spaces $\mathcal{M}_{\varphi,w}$ and their sequence version the spaces $\mathfrak{m}_{\varphi,w}$. We first show that they are  rearrangement invariant Banach spaces with the Fatou property. It holds in general that $M_{\varphi,w} \subset \mathcal{M}_{\varphi,w}$ and $\mathfrak{m}_{\varphi,w} \subset \mathcal{M}_{\varphi,w}$. We prove that they are equal with the quasinorm $\|\cdot\|_M$ (resp., $\|\cdot\|_m$) and the norm in $\mathcal{M}_{\varphi,w}$ (resp., in $\mathfrak{m}_{\varphi,w}$) equivalent whenever $w$ is regular. The converse of this statement holds true under additional assumption that the lower index $\alpha_\varphi > 1$. The latter fact was obtained by  calculating the formula of the fundamental function of the spaces $\mathcal{M}_{\varphi,w}$ and $\mathfrak{m}_{\varphi,w}$. We finish this section by showing that the spaces $\mathcal{M}_{\varphi, w}$ are the K\"othe  duals of the ordinary Orlicz-Lorentz spaces $\Lambda_{\varphi_*,w}$, and thus the K\"othe biduals of the spaces $M_{\varphi,w}$  where this last identification is isometric.  It follows at once that if $M_{\varphi,w}$ is normable then $M_{\varphi,w}=\mathcal{M}_{\varphi,w}$ with equivalent norms.

The  identification of $\mathcal{M}_{\varphi,w}$ as K\"othe dual of  $\Lambda_{\varphi_*,w}$ was first given by K. Le\'snik, after attending a talk about   a first version of the present paper. Le\'snik's elegant proof is based on Calder\'on-Lozanovskii method and quite different from that given here. It will be presented in the paper \cite{KLR} where the theory of these new classes $\mathcal{M}_{\varphi,w}$ and $\mathfrak{m}_{\varphi,w}$ is developed further.

\section{Formulas for Decreasing Rearrangements}\label{sec:New formulas}

We define $[[1,n]] = \{1,\dots,n\}$ for any $n\in\mathbb{N}$.

\begin{proposition}\label{prop:Finite} Let $w=\{w(i)\}_{i=1}^n$ be a finite
decreasing, positive weight sequence. Then for every finite
sequence $x=\{x(i)\}_{i=1}^n$,
\begin{align*}
\sum_{i=1}^n \varphi\left(\frac{x^*(i)}{w(i)}\right) w(i)& = \inf
\left\{\sum_{i=1}^n \varphi\left(\frac{x^*(i)}{w(\sigma(i))}\right)
w(\sigma(i)): \ \ \sigma\ \text{is a permutation of}\
[[1,n]]\right\}
\end{align*}
\end{proposition}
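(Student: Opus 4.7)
The inequality $\le$ is immediate by taking $\sigma=\mathrm{id}$ in the infimum, so the real task is to show that the identity actually minimizes
$$S(\sigma) := \sum_{i=1}^n \varphi\bigl(x^*(i)/w(\sigma(i))\bigr)\, w(\sigma(i))$$
over all permutations $\sigma$ of $[[1,n]]$. The plan is a bubble-sort argument on $\sigma$: if $\sigma\neq\mathrm{id}$ then there is an inversion, i.e.\ a pair $i<j$ with $\sigma(i)>\sigma(j)$, and swapping the values $\sigma(i)$ and $\sigma(j)$ strictly reduces the number of inversions. Iterating, any $\sigma$ reaches $\mathrm{id}$ in finitely many such swaps, so it is enough to prove that one single swap at an inversion does not increase $S$.

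Fixing such $i<j$ and setting $a=x^*(i)$, $b=x^*(j)$, $p=w(\sigma(j))$ and $q=w(\sigma(i))$, the monotonicity of $x^*$ and $w$ together with the positivity of $w$ yield $a\ge b\ge 0$ and $0<q\le p$. The swap changes the joint $(i,j)$-contribution from $\varphi(a/q)q+\varphi(b/p)p$ to $\varphi(a/p)p+\varphi(b/q)q$, so the bubble-sort step reduces to the two-variable key inequality
$$\varphi(a/p)p+\varphi(b/q)q \ \le\ \varphi(a/q)q+\varphi(b/p)p.$$
Equivalently, one needs $g(a)\ge g(b)$ where $g(t):=q\varphi(t/q)-p\varphi(t/p)$, and since $a\ge b$ this will follow once $g$ is shown to be non-decreasing on $\R_+$. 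This is the essential point, and the only place where one truly uses convexity.

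For that monotonicity, the plan is to invoke the right derivative $\varphi'_+$, which exists and is non-decreasing on $\R_+$ because $\varphi$ is convex; together with $\varphi(0)=0$, a change of variable gives $r\varphi(t/r)=\int_0^t \varphi'_+(u/r)\,du$ for every $r>0$ and $t\ge 0$. Hence
$$g(t)=\int_0^t \bigl[\varphi'_+(u/q)-\varphi'_+(u/p)\bigr]\,du,$$
and since $q\le p$ implies $u/q\ge u/p$ while $\varphi'_+$ is monotone, the integrand is non-negative, so $g$ is non-decreasing as required. This monotonicity is nothing but the submodularity of $(a,b)\mapsto b\varphi(a/b)$ (equivalently the supermodularity of $-b\varphi(a/b)$ highlighted in the introduction); the bubble-sort reduction from this two-point inequality to the full statement is then purely combinatorial. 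The only real obstacle is thus the convexity-based monotonicity of $g$, everything else being bookkeeping.
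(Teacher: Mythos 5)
Your proof is correct and follows essentially the same route as the paper: both rest on the same two-point exchange inequality $\varphi(a/p)p+\varphi(b/q)q\le\varphi(a/q)q+\varphi(b/p)p$ for $a\ge b\ge 0$, $0<q\le p$, and then reduce a general permutation to a sequence of such transpositions, each of which does not increase the sum. The only differences are cosmetic: the paper derives the exchange inequality from the chord-slope form of convexity rather than by integrating $\varphi'_+$, and it organizes the transpositions by induction on $n$ (first moving the index $n$ into place) rather than by a bubble sort on inversions.
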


\begin{proof}

Clearly, the left side of the required inequality is bigger than the right side.
It remains to show the opposite inequality. Let's start with two
dimensional spaces, and take $x^*=(s_1, s_2)$ and $w=(t_1, t_2)$,
where $s_1 > s_2 > 0$ and $ t_1 > t_2 >0$. Then we have $s_1/t_1 >
s_2/t_1,\, s_1/t_2 > s_2/t_2$ and $s_1/t_2 > s_1/t_1, \, s_2/t_2 >
s_2/t_1$. Consequently by convexity of $\varphi$ we get
\[
\frac{\varphi(s_1/t_1) - \varphi(s_2/t_1)}{s_1/t_1 - s_2/t_1}
\le \frac{\varphi(s_1/t_2) - \varphi(s_2/t_2)}{s_1/t_2 - s_2/t_2},
\]
which equivalently means that
\begin{equation}\label{eq:2m}
\sum_{i=1}^2 \varphi\left(\frac{x^*(i)}{w(i)}\right) w(i)
\le \sum_{i=1}^2 \varphi\left(\frac{x^*(i)}{w(\sigma(i))}\right)w(\sigma(i)),
\end{equation}
where $\sigma(1) = 2$ and $\sigma(2) = 1$. Now we reason by
induction on $n$.  We assume for $n>2$ that
\[
\sum_{i=1}^{n-1} \varphi\left(\frac{x^*(i)}{w(i)}\right) w(i) \le
\sum_{i=1}^{n-1} \varphi\left(\frac{x^*(i)}{w(\alpha(i))}\right) w(\alpha(i))
\]
for any permutation $\alpha$ of $[[1,n-1]]$.
If $\sigma(n)=n$, then $\sigma$ induces a
permutation of $[[1,n-1]]$ and we apply the induction hypothesis
to $\{x(i)\}_{i=1}^{n-1}$ and $\{w(i)\}_{i=1}^{n-1}$. Adding to both
sides the term $\varphi\left(\frac{x^*(n)}{w(n)}\right) w(n)$ we get the required inequality.
If $\sigma(n)<n$, then also $\sigma^{-1}(n)<n$. By (\ref{eq:2m})
setting $s_1 = x^*(\sigma^{-1} (n))$, $s_2 = x^*(n)$, $t_1=
w(\sigma(n))$, $t_2 = w(n)$ we have
\[
\varphi\left(\frac{x^*(\sigma^{-1}(n))}{w(n)}\right)
w(n)+\varphi\left(\frac{x^*(n)}{w(\sigma(n))}\right) w(\sigma(n))\ge
\varphi\left(\frac{x^*(\sigma^{-1}(n))}{w(\sigma(n))}\right)
w(\sigma(n))+\varphi\left(\frac{x^*(n)}{w(n)}\right) w(n).
\]
Hence if $\tau$ is the transposition which exchanges the indices
$n$ and $\sigma(n)$, that is if $\tau(i) = i$ for $i\ne n,
\sigma(n)$, and $\tau(n) = \sigma(n)$, $\tau(\sigma(n)) = n$,
then we get
\[ \sum_{i=1}^n
\varphi\left(\frac{x^*(i)}{w(\sigma(i))}\right) w(\sigma(i)) \ge
\sum_{i=1}^n \varphi\left(\frac{x^*(i)}{w(\tau\circ\sigma(i))}\right)
w(\tau\circ\sigma(i)).
\]
Finally since the permutation $\tau\circ\sigma$ fixes the index $n$, that
is $\tau\circ\sigma(n) = n$, we can apply the previous case and
the induction is finished.
\end{proof}

\begin{corollary}\label{cor:finite}
Let $w=\{w(i)\}_{i=1}^n$ be a finite, decreasing and positive sequence, and $(a_{ij})$ be a
$n\times n$ matrix of non-negative numbers satisfying the
condition
\[
 \sum_{j=1}^n a_{ij}=\sum_{j=1}^n a_{ji}  \  \ \ \  i= 1,\dots, n,
 \]
that is the $i$-th row and the $i$-th column have the same
sum,  then for every sequence  of real numbers $\{x(i)\}_{i=1}^n$ we
have
\begin{equation*}
\sum_{i,j=1}^n \varphi\left(\frac{x^*(i)}{w(i)}\right) w(i)a_{ij} \le
\sum_{i,j=1}^n \varphi\left(\frac{x^*(i)}{w(j)}\right) w(j)a_{ij}.
\end{equation*}

\end{corollary}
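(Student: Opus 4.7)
My plan is to reduce the claim to Proposition \ref{prop:Finite} via a circulation/cycle decomposition of the matrix $(a_{ij})$. The hypothesis $\sum_j a_{ij}=\sum_j a_{ji}$ says that, viewed as weights on the edges of the complete directed graph on $[[1,n]]$ (loops allowed), $(a_{ij})$ is a weighted Eulerian \emph{circulation}. My goal is to write any such circulation as a non-negative combination of ``simple cycle matrices'' $M_\sigma$, where $\sigma$ is a cyclic permutation of some subset $S\subseteq [[1,n]]$ and $(M_\sigma)_{ij}=1$ iff $i\in S$ and $j=\sigma(i)$ (the case $|S|=1$ being a loop matrix).

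To establish this decomposition I would induct on the number of nonzero off-diagonal entries of $(a_{ij})$. If some $a_{i_0,j_0}>0$ with $i_0\neq j_0$, the circulation condition propagates: since $\sum_j a_{j_0,j}=\sum_j a_{j,j_0}\geq a_{i_0,j_0}>0$ there exists $j_1$ with $a_{j_0,j_1}>0$, and iterating this traces a directed walk of positive weights which must close into a cycle within $n$ steps. Subtracting from $(a_{ij})$ the scalar multiple $\lambda M_\sigma$ associated to this cycle, where $\lambda$ is the minimum weight along the cycle, preserves non-negativity and the circulation property while strictly reducing the number of nonzero off-diagonal entries. Once only diagonal entries survive, each $a_{ii}\geq 0$ is absorbed as a loop cycle matrix with coefficient $a_{ii}$.

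Given the decomposition, it suffices to establish the target inequality for each cycle matrix $M_\sigma$ separately and combine by non-negative linearity. For a fixed $M_\sigma$ with support $S=\{q_1<q_2<\dots<q_k\}$, the two sides of the inequality reduce respectively to
\[
\sum_{r=1}^k \varphi\!\left(\frac{x^*(q_r)}{w(q_r)}\right) w(q_r) \quad \text{and} \quad \sum_{r=1}^k \varphi\!\left(\frac{x^*(q_r)}{w(\sigma(q_r))}\right) w(\sigma(q_r)).
\]
The sub-sequences $\{x^*(q_r)\}_{r=1}^k$ and $\{w(q_r)\}_{r=1}^k$ remain decreasing (the latter positive), and $\sigma$ restricted to $S$ corresponds, via the order-preserving identification $r\mapsto q_r$, to a permutation of $[[1,k]]$; Proposition \ref{prop:Finite} then yields exactly the required inequality (trivial equality when $|S|=1$).

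The main obstacle is the first step, the Eulerian cycle decomposition for weighted digraphs, but the induction on the number of nonzero off-diagonal entries is a standard combinatorial argument that handles it cleanly. Once that is in place, the rest is a direct specialization of Proposition \ref{prop:Finite} followed by assembly along the decomposition.
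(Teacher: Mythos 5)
Your argument is correct, but it follows a genuinely different route from the paper's proof. The paper first approximates the entries $a_{ij}$ by non-negative rationals respecting the row/column condition, clears denominators to reduce to integer entries, and then ``blows up'' the index set to $J=[[1,\sum_{ij}a_{ij}]]$: intervals $I_{ij}$ of length $a_{ij}$ are grouped into row-blocks $A_i$ and column-blocks $B_i$, a permutation of $J$ carrying each $A_i$ onto $B_i$ is chosen, and Proposition \ref{prop:Finite} is applied \emph{once} to the inflated decreasing sequences $\hat x,\hat w$. Your circulation/cycle decomposition instead works directly with arbitrary non-negative real entries, so it avoids the approximation and homogeneity steps entirely (the paper leaves implicit both the continuity argument needed to pass from rational back to real $a_{ij}$ and the care required to keep the rational approximants non-negative); the price is that Proposition \ref{prop:Finite} is invoked once per cycle, on the restricted subsequences $\{x^*(q_r)\}$, $\{w(q_r)\}$, which --- as you note --- are still decreasing and positive, so that reduction is legitimate, and the conclusion follows by summing with the non-negative coefficients. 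The one imprecision is your termination measure: when tracing the walk out of $j_0$ you may pick $j_1=j_0$ if $a_{j_0j_0}>0$, in which case subtracting $\lambda M_\sigma$ kills a diagonal entry rather than an off-diagonal one, so the number of nonzero \emph{off-diagonal} entries need not strictly decrease at that step. Either induct on the total number of nonzero entries, or strip the diagonal at the outset --- this preserves the circulation condition, and the loop terms contribute identically to both sides of the inequality. With that small adjustment the proof is complete.
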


\begin{proof}

We first reduce the proof
to the case where $a_{ij}$ are positive rational numbers.
In fact we approximate $a_{ij}$, $1\le i\le n, 1\le j< n$, by rational
numbers $q_{ij}$ and then use the relations $\sum_j
q_{ij}=\sum_jq_{ji}$ for defining the last row of the
approximating matrix. Then by homogeneity we can further reduce
it to the case where $a_{ij}$ are natural numbers. We consider
then a partition of the interval $J=[[1,\sum_{ij}a_{ij}]]$ into
disjoint  intervals $I_{ij}\subset\Bbb N$, with the
respective lengths $|I_{ij}|=a_{ij}$. We put them in lexicographic
order
\[
I_{11}<I_{12}<...<I_{1n}<I_{21}<...<I_{2n}<...<I_{n1}<...<I_{nn},
\]
and define $A_i=\bigcup_j I_{ij}$ and $B_j=\bigcup_i I_{ij}$. Set
$\hat x(k)=x^*(i)$ and  $\hat w(k)=w(i)$ for $k\in A_i$. Note that
the sequences $\hat x$ and $\hat w$ are decreasing and
$|A_i|=|B_i|$.  So we can find a permutation $\sigma$ of $J$
mapping $A_i$ onto $B_i$ for each  $i=1,\dots,n$. Then the two
sides of the desired inequality are respectively equal to
$\sum_k\varphi\left(\frac{\hat x^*(k)}{\hat w(k)}\right) \hat w(k)$
and $\sum_k\varphi\left(\frac{\hat x^*(k)}{\hat w(\sigma(k))}\right)
\hat w(\sigma(k))$. Consequently,  by Proposition \ref{prop:Finite} the proof is finished.

\end{proof}

\begin{lemma}\label{l:Infinite:ineq} Let $w=\{w(n)\}$ be a decreasing weight sequence. Then for every sequence $x$,
\begin{align*}\label{eq:11}
\sum_{n=1}^\infty \varphi\left(\frac{x^*(n)}{w(n)}\right) w(n)& \le
\inf\left\{\sum_{n=1}^\infty \varphi\left(\frac{|x(n)|}{v(n)}\right)
v(n): v\sim w, v\ge 0\right\}\\
&\le \inf\left\{\sum_{n=1}^\infty \varphi\left(\frac{|x(n)|}{v(n)}\right)
v(n): v\sim w, v> 0\right\}\\
&\le \inf\left\{\sum_{n=1}^\infty \varphi\left(\frac{|x(n)|}{w\circ
\sigma(n)}\right) w\circ \sigma(n): \sigma
\ \text{automorphism of} \ \ \mathbb{N}\right\}.
\end{align*}

\end{lemma}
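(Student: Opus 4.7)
The plan is to prove the three inequalities in the chain successively. The second is immediate from set inclusion: $\{v\sim w, v>0\}\subset\{v\sim w, v\ge 0\}$, so the infimum over the smaller set majorizes that over the larger one. The third is the same observation: for any automorphism $\sigma$ of $\mathbb{N}$, the sequence $v:=w\circ\sigma$ is a rearrangement of $w$, so $v^*=w^*=w$ and (whenever $w>0$) $v$ is strictly positive, hence lies in the domain of the middle infimum; edge cases where $w$ has zeros are absorbed by the convention $\varphi(\cdot/0)\cdot 0$ fixed in the preliminaries.

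The heart of the lemma is the first inequality. Fix $v\sim w$ with $v\ge 0$; the task is to show
\[
\sum_{n=1}^\infty \varphi\!\left(\frac{x^*(n)}{w(n)}\right) w(n) \le \sum_{n=1}^\infty \varphi\!\left(\frac{|x(n)|}{v(n)}\right) v(n).
\]
Since $v^*=w$, the sequences $v$ and $w$ share the same multiset of values with their multiplicities; partitioning $\mathbb{N}$ according to the value of $w$ and of $v$ and bijecting corresponding blocks, I obtain a bijection $\pi:\mathbb{N}\to\mathbb{N}$ with $v=w\circ\pi$. Substituting and then reindexing by $m=\pi(n)$ on the right, the claim reduces to a single-weight inequality: for every $y\ge 0$ with $y^*=x^*$,
\[
\sum_n \varphi\!\left(\frac{x^*(n)}{w(n)}\right) w(n) \le \sum_n \varphi\!\left(\frac{y(n)}{w(n)}\right) w(n),
\]
where I have set $y(m):=|x(\pi^{-1}(m))|$.

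To establish this single-weight statement I would truncate and invoke Proposition \ref{prop:Finite}. For each $N$, let $y_N^*$ denote the decreasing rearrangement of the finite tuple $(y(1),\dots,y(N))$; since this tuple is itself a permutation of $y_N^*$, Proposition \ref{prop:Finite} (applied to the decreasing sequence $y_N^*$ and the decreasing weight $(w(1),\dots,w(N))$, with the permutation witnessing the rearrangement) yields
\[
\sum_{k=1}^N \varphi\!\left(\frac{y_N^*(k)}{w(k)}\right) w(k) \le \sum_{k=1}^N \varphi\!\left(\frac{y(k)}{w(k)}\right) w(k) \le \sum_{k=1}^\infty \varphi\!\left(\frac{y(k)}{w(k)}\right) w(k).
\]
Since $y_N^*(k)\uparrow y^*(k)=x^*(k)$ as $N\to\infty$ for each fixed $k$, the monotone convergence theorem passes the left-hand bound to the infinite sum, completing the argument. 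The main substantive step is the matching bijection $\pi$; the rest is truncation together with MCT.
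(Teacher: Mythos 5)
There is a genuine gap in the first (and main) inequality: the matching bijection $\pi$ with $v=w\circ\pi$ does not exist in general. For sequences, $v\sim w$ means only $v^*=w^*=w$, i.e.\ equality of distribution functions $d_v=d_w$, and this does \emph{not} force $v$ and $w$ to have the same multiset of values. Two counterexamples: (a) take $w>0$ everywhere and let $v$ vanish on the odd integers while $v(2k)=w(k)$; then $v^*=w$ but $v$ takes the value $0$ infinitely often and $w$ never does, so no bijection $\pi$ with $v=w\circ\pi$ can exist. (b) Even with $v>0$: take $w\equiv 1$ and $v=(1/2,1,1,1,\dots)$; then $d_v(s)=d_w(s)$ for all $s>0$, hence $v^*=w$, yet the multisets differ. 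Case (a) is not a technicality — it is exactly why the lemma states the first infimum over $v\ge 0$ separately from the infimum over permutations $w\circ\sigma$, and why the paper later needs the extra ``$\varphi$-controlled'' hypothesis (Theorem \ref{integral:ineq2}) to close the gap between these infima. If every admissible $v$ were of the form $w\circ\pi$, all three infima in the lemma would coincide trivially and much of Section \ref{sec:New formulas} would be vacuous.

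The paper circumvents this by permuting $x$ rather than $v$: for each $n$ it chooses an automorphism $\tau_n$ of $\mathbb N$ with $x^*(i)=|x\circ\tau_n(i)|$ for $i\le n$ (possible once one has reduced to $x(n)\to 0$), and works with the finite block $v_n=(v\circ\tau_n(i))_{i=1}^n$. The crucial point is that one only gets the \emph{inequality} $v_n^*(i)\le w(i)$ on this block, not equality, and this loss is absorbed by the monotonicity of $t\mapsto\varphi(a/t)t$ before Proposition \ref{prop:Finite} is invoked. That monotonicity step is precisely what your reduction is missing; without it, the argument cannot handle a $v$ that is equimeasurable with $w$ but not a rearrangement of it. (Your truncation-plus-monotone-convergence mechanism and the conversion between ``permute the data'' and ``permute the weight'' in Proposition \ref{prop:Finite} are fine; the degenerate cases $\supp w\ne\mathbb N$ and $x(n)\nrightarrow 0$ would also need to be treated, as the paper does, but those are secondary.)
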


\begin{proof}
\par   Observe that the second inequality is obvious and the third one is also clear since for every
automorphism $\sigma$ of $\mathbb{N}$ we have $(w\circ \sigma)^* =
w$.

In order to prove the first one, let $v\ge 0$ be any sequence such that $v\sim w$.
We observe first that if $\supp w \neq \mathbb{N}$, then both sides of the first inequality are equal. In fact if $|\supp x| \le |\supp w|< \infty$, then the situation is reduced to the finite case as in Proposition \ref{prop:Finite}. If $|\supp x| > |\supp w|$ then $m(x) = \infty$, and for any $v\sim w$ there is at least one $i$ such that $v(i)=0$ and $x^*(i) > 0$. So the right side is also infinity. Thus we assume that $w> 0$.

We claim first that if $x(n)\nrightarrow 0$ as $n\rightarrow\infty$, then each expression
above is equal to infinity and the equalities hold.
Indeed if  $x(n)\nrightarrow 0$ then $\inf_n x^*(n) = K > 0$. Since $w$ is decreasing, there exists $b>0$ such that the set
$S= \{n: w(n) \le b\}$ is infinite. Hence using the fact that the function $t\mapsto\varphi(a/t)t$ is decreasing for a fixed $a$,
\[
\sum_{n=1}^\infty \varphi\left(\frac{x^*(n)}{w(n)}\right) w(n) \ge \sum_{n\in S} \varphi\left(\frac{x^*(n)}{w(n)}\right) w(n)
\ge \sum_{n\in S}  \varphi\left(\frac{K}{b}\right) b = \infty.
\]
Similarly, for any $v\sim w$, $v\ge 0$, the set $\{n: v(n) \le b\}$ is also infinite, and so
 $\sum_{n=1}^\infty \varphi\left(\frac{|x(n)|}{v(n)}\right)v(n) = \infty$. Therefore all expressions in the above inequalities are equal to infinity.

Suppose now that $x(n)\to 0$ if $n\to\infty$.
Letting $n\in\mathbb{N}$ be arbitrary,
there exists an automorphism $\tau_n$ of $\mathbb{N}$ such that
$x^*(i) = |x\circ\tau_n(i)|$ for all $i=1,\dots,n$. Then $(v\circ
\tau_n)^* = v^* = w$, and so setting a finite sequence $v_n =
(v\circ \tau_n (i))_{i=1}^n$, we have that its rearrangement
$v_n^*(i) \le w(i)$ for all $i=1,\dots,n$. Assume $v_n^*(i) > 0$ for all $i\in \{1,\dots,n\}$. Then applying the fact that
the function $t\mapsto \varphi(a/t)t$ is decreasing for any fixed
$a>0$, and Proposition \ref{prop:Finite}, we get the following
inequalities
\begin{align*}
\sum_{i=1}^n \varphi\left(\frac{x^*(i)}{w(i)}\right) w(i) & \le
\sum_{i=1}^n \varphi\left(\frac{x^*(i)}{v^*_n(i)}\right)v_n^*(i)
\le \sum_{i=1}^n \varphi\left(\frac{x^*(i)}{v\circ \tau_n(i)}\right)v\circ\tau_n(i)\\
& = \sum_{i=1}^n \varphi\left(\frac{|x\circ\tau_n(i)|}{v\circ \tau_n(i)}\right)v\circ\tau_n(i)
\le \sum_{i=1}^\infty \varphi\left(\frac{|x(i)|}{v(i)}\right) v(i).
\end{align*}
Now let $v_n^*(i) = 0$ for some $i\in \{1,\dots,n\}$. Then if $x^*(i)=0$, by our convention each $i$-th term of the first four sums above is equal to zero, so the inequalities hold. If $x^*(i) > 0$ then $\varphi(x^*(i)/v_n^*(i))v_n^*(i) = \infty$, and so the second sum is also infinity. By definition of $\tau_n$ there is $k\in \{1,\dots,n\}$ such that $|x\circ \tau_n(k)|>0$ and $v\circ \tau_n(k)=0$, so at least one of the term in the third sum is infinity. Finally the last sum is infinity since for $j = \tau_n(k)$ we have $|x(j)| > 0$ and $v(j) =0$. Thus in all cases the inequalities are satisfied.

Letting then $n\rightarrow\infty$ we obtain that
\[
\sum_{i=1}^\infty \varphi\left(\frac{x^*(i)}{w(i)}\right) w(i) \le
\sum_{i=1}^\infty \varphi\left(\frac{|x(i)|}{v(i)}\right) v(i),
\]
for every $v\ge0$ and $v\sim w$. This allows us to pass to the
infimum on the right side and the proof is completed.

\end{proof}

\begin{theorem}\label{th:infinite:equality1} Let $w=\{w(n)\}$ be
a decreasing weight sequence. Then for any $x=\{x(n)\}$,
\begin{align*}
\sum_{n=1}^\infty \varphi\left(\frac{x^*(n)}{w(n)}\right) w(n)& =
\inf\left\{\sum_{n=1}^\infty \varphi\left(\frac{|x(n)|}{v(n)}\right)
v(n): v\sim w, v\ge 0\right\}.
\end{align*}
\end{theorem}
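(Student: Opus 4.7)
The plan is to verify the inequality reverse to that of Lemma \ref{l:Infinite:ineq} by exhibiting, for any sequence $x$, a specific $v\ge 0$ with $v\sim w$ whose associated sum equals the left-hand side. If the LHS is infinite, the lemma forces the infimum to be infinite too, so we may assume the LHS is finite. The proof of Lemma \ref{l:Infinite:ineq} then shows that either $\supp w = \mathbb{N}$ and $x(n)\to 0$, or $\supp w$ is finite with $|\supp x|\le |\supp w|$.

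The construction pairs the largest weight $w(1)$ with the largest value $x^*(1)$, and so on. Enumerate $\supp x$ as $\{i_k\}$ in order of decreasing $|x|$, so that $|x(i_k)| = x^*(k)$ for each valid $k$, and set $v(i_k) = w(k)$. Distribute the remaining values $\{w(k) : k > |\supp x|\}$ bijectively on a suitable subset of $\mathbb{N}\setminus \supp x$, and set $v = 0$ at all indices left over. When $|\supp x|$ is infinite, the remaining-weights list is empty and $v$ is simply zero off $\supp x$; when it is finite, the complement $\mathbb{N}\setminus\supp x$ is infinite and easily accommodates the rest.

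Two verifications close the argument. First, each value $w(k)$ appears exactly once in $v$, so $d_v(s) = |\{k : w(k) > s\}| = d_w(s)$ for every $s > 0$, giving $v^* = w^* = w$ and hence $v\sim w$; the possibly many zero entries of $v$ do not affect $d_v(s)$ for $s>0$. Second, the indices $n = i_k \in \supp x$ contribute $\varphi(x^*(k)/w(k))\,w(k)$ to the right-hand sum, while each $n\notin\supp x$ satisfies $|x(n)|=0$ and therefore contributes $0$ by the convention fixed in the preliminaries. The total equals the LHS, and combined with Lemma \ref{l:Infinite:ineq} yields the claimed equality. The main (modest) obstacle is keeping the bookkeeping consistent across the sub-cases of whether $\supp x$ and its complement are finite or infinite, and in particular confirming $v^* = w$ in the presence of zero entries in $v$.
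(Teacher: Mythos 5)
Your proof is correct and follows essentially the same route as the paper: both constructions realize the infimum by pairing $x^*(k)$ with $w(k)$ through an enumeration of $\supp x$ in decreasing order of $|x|$ (the paper's bijection $\sigma$ with $|x|=x^*\circ\sigma$), extending $v$ by zero off $\supp x$ and invoking the convention for $0/0$ terms. The bookkeeping you flag (checking $v^*=w$ despite extra zero entries, and the finite/infinite support subcases) is handled in the paper in exactly the same way.
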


\begin{proof}
As in the proof of Lemma \ref{l:Infinite:ineq} we can assume that $w$ is positive and $x(n)\to 0$ as $n\to\infty$.  If $\supp x = \mathbb{N}$ or $|\supp x| < \infty$ then there exists a one-to-one and onto mapping $\sigma:\mathbb{N} \to \mathbb{N}$ such that $|x(n)| = x^*\circ\sigma(n)$, $n\in \mathbb{N}$. If $|\supp x| = \infty$ but $\supp x \ne \mathbb{N}$, then we find $\sigma: \supp x \to \mathbb{N}$, one-to-one, onto and such that $|x(n)| = x^*\circ \sigma(n)$, $n\in \supp x$. In both cases
\[
m(x) = \sum_{n\in \supp x} \varphi\left(\frac{|x(n)|}{w\circ\sigma(n)}\right) w\circ\sigma(n).
\]
In the first case defining $v = w\circ \sigma$, we have $v\sim w$ and $v>0$.
In the second case define $v= \{v(n)\}$ such that $v(n) = w\circ \sigma(n)$ for $n\in\supp x$, and $v(n) = 0$ for $n\notin \supp x$. Since the range of $\sigma$ is $\mathbb{N}$, it is clear that $v\sim w$, $v\ge 0$, and by our convention $m(x) = \sum_{n=1}^\infty \varphi\left(\frac{|x(n)|}{v(n)}\right)v(n)$, which completes the proof.

\end{proof}

\begin{theorem}\label{integral:ineq} Let $w$ be a decreasing weight function on $I$.

\rm(i) For every $f\in L^0$ we have
\begin{equation*}
\int_I \varphi\left(\frac{f^*}{w}\right) w
= \inf \left\{\int_I \varphi\left(\frac{|f|}{v}\right)v: {v\sim w}, v\ge 0 \right\}.
\end{equation*}

\rm(ii) If $I=(0,a)$ with $a<\infty$  we also have
\begin{align*}
\int_I \varphi\left(\frac{f^*}{w}\right) w
&= \inf \left\{\int_I \varphi\left(\frac{|f|}{v}\right)v: {v\sim w}, v > 0 \right\}\\
&= \inf \left\{\int_I \varphi\left(\frac{|f|}{w\circ\sigma}\right)w\circ\sigma: \sigma\ \text{measure
 preserving mapping from $I$ onto $I$}\right\}.
\end{align*}
\end{theorem}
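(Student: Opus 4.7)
The proof mirrors the sequence case (Lemma \ref{l:Infinite:ineq} and Theorem \ref{th:infinite:equality1}), with a discretization argument bridging the sequence and integral settings. The chain of inequalities
\[
\int_I\varphi(f^*/w)w\le \inf_{v\sim w,\,v\ge 0}\int_I\varphi(|f|/v)v\le \inf_{v\sim w,\,v>0}\int_I\varphi(|f|/v)v\le \inf_\sigma \int_I\varphi(|f|/w\circ\sigma)w\circ\sigma
\]
is immediate from the increasing constraints on $v$, so the task is to prove the leftmost inequality and to produce minimizers realizing equality in each case.

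For the basic inequality, truncate (if necessary) to a finite subinterval $(0,b)\subset I$ and partition $(0,b)$ into $n$ equal pieces $J_1,\dots,J_n$ of length $\delta$. Let $\bar v_i=\delta^{-1}\int_{J_i}v$, $\bar w_i=\delta^{-1}\int_{J_i}w$, and define a matrix $a_{ij}=\delta^{-1}|J_i\cap\sigma^{-1}(J_j)|$ via any measure-preserving $\sigma:(0,b)\to(0,b)$ with $v=w\circ\sigma$ a.e.; this matrix has all row and column sums equal to $1$. Applying Corollary \ref{cor:finite} to discretized values of $f^*$ and $w$ then yields a finite inequality. By joint convexity of the perspective function $(s,t)\mapsto\varphi(s/t)t$, Jensen's inequality delivers $\sum_i\varphi(\bar f_i/\bar v_i)\bar v_i\delta\le \int_{(0,b)}\varphi(|f|/v)v$, controlling the right-hand side; on the left, monotonicity of $f^*$ and $w$ makes the Riemann sums converge to $\int\varphi(f^*/w)w$, so sending $n\to\infty$ and then $b\nearrow a$, with Fatou's lemma, gives the integral inequality. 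Equality in (i) is realized by invoking the standard rearrangement theorem (see \cite{BS}) to obtain a measure-preserving map $\tau:\{|f|>0\}\to\{f^*>0\}$ with $|f|=f^*\circ\tau$; setting $v=w\circ\tau$ on $\{|f|>0\}$ and $v=0$ elsewhere gives $v\sim w$ (matching zero tails) and $\int_I\varphi(|f|/v)v=\int_I\varphi(f^*/w)w$ by change of variables, using the convention from the introduction for terms with $v=0$. For the extra conclusions in (ii), the finiteness of $|I|$ allows one to extend $\tau$ to a measure-preserving bijection $\tilde\tau:I\to I$, so that $v=w\circ\tilde\tau$ is simultaneously of the required form $w\circ\sigma$ and strictly positive wherever $w$ is, witnessing both of the two further equalities.

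The main obstacle is the discretization step: the piecewise-constant approximations do not exactly preserve the relation $v\sim w$, and two-sided convexity estimates are required so that the finite inequality from Corollary \ref{cor:finite} survives the limit. A secondary issue is that when $|I|=\infty$ or $w$ has a zero tail, some of the $a_{ij}$ may fail to define honest doubly stochastic matrices; this is handled by the truncation $(0,b)\nearrow I$ combined with monotonicity of $w$, and by absorbing exceptional zero sets into the convention governing $\varphi(|f|/v)v$ when $v=0$.
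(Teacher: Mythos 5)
Your skeleton matches the paper's: Ryff's theorem supplies the near-minimizer for the ``$\geq$'' direction, and the ``$\leq$'' direction is reduced to Corollary \ref{cor:finite}. But the reduction you sketch has genuine gaps. First, the discretization bookkeeping does not close. Corollary \ref{cor:finite} gives $\sum_i\varphi(\bar f_i/\bar w_i)\bar w_i\le\sum_{i,j}a_{ij}\varphi(\bar f_i/\bar w_j)\bar w_j$, so you must then bound $\sum_{i,j}a_{ij}\varphi(\bar f_i/\bar w_j)\bar w_j\,\delta$ by $\int\varphi(|f|/v)v$. Jensen applied to the perspective function on $S_{ij}=J_i\cap\sigma^{-1}(J_j)$ produces the averages of $|f|$ and of $v$ over $S_{ij}$, not $\bar f_i$ (an average over $J_i$) and $\bar w_j$ (an average over $J_j$); and your displayed inequality $\sum_i\varphi(\bar f_i/\bar v_i)\bar v_i\delta\le\int\varphi(|f|/v)v$ is a single sum in which $\{\bar v_i\}$ is not a rearrangement of $\{\bar w_i\}$, so Corollary \ref{cor:finite} does not apply to it; moreover for that inequality $\bar f_i$ must be the average of $|f|$, while for the left Riemann sums it must be a discretization of $f^*$ --- these differ unless $|f|=f^*$, a reduction you never make. (A repair exists --- replace averages by $\inf_{J_i}f^*$ and $\sup_{J_j}w$ and use that $\varphi(s/t)t$ is increasing in $s$ and decreasing in $t$, then Fatou --- but it is not the argument you describe.) Second, and more seriously, when $|I|=\infty$ the truncation to $(0,b)$ destroys the hypothesis you need: $v\chi_{(0,b)}$ is in general not equimeasurable with $w\chi_{(0,b)}$, so no measure-preserving $\sigma:(0,b)\to(0,b)$ with $v=w\circ\sigma$ exists and the matrix $a_{ij}$ is undefined; ``monotonicity of $w$'' does not remedy this. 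The paper's device is to first replace $f$ by a simple function with support $E$ of finite measure, restrict $v$ to $E$, and pass from $w$ to $v_E^*\le w$ using that $t\mapsto\varphi(c/t)t$ is decreasing; that step has no analogue in your plan.

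A smaller slip: in the equality direction of (i), if $0<|\supp f|<\infty$ your witness ($v=w\circ\tau$ on $\supp f$, $v=0$ elsewhere) has $v^*=w\chi_{(0,|\supp f|)}\neq w$, so it is not an admissible competitor in the infimum over $v\sim w$. You must instead extend $\tau$ to a measure-preserving map of $I$ onto $I$ (as you do in part (ii)) and take $v=w\circ\tau$ everywhere; setting $v=0$ off $\supp f$ is legitimate only when $\supp f^*=I$.
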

\begin{proof}
Similarly as in the proof of Lemma \ref{l:Infinite:ineq} we can assume that $w>0$ on $I$. Assume that the left-hand side of the equality in (i) is finite. Then reasoning similarly as in the proof of Lemma \ref{l:Infinite:ineq}, $\lim_{t\to\infty}f^*(t)=0$ and so by Ryff's Theorem
\cite[Cor. 7.6 in Chap. 2]{BS}  there exists  a measure preserving and onto map $\tau:\supp f\to \supp f^*$
such that $|f|=f^*\circ\tau$ on $\supp f$.
 If $\supp f = I$ or $|\supp f| < \infty$ then $\tau$ is, resp. can be extended to a measure preserving mapping from $I$ onto $I$. In the other case we have $\supp f^* =I$. Thus the range of $\tau$ is $I$ in both cases.
 Hence letting $v(t) = w \circ\tau(t)$ for $t\in\supp f$ and $v(t) =0$ for $t\notin \supp f$, we have $v\sim w$, $v\ge 0$, and by our convention
\[
\int_I \varphi\left(\frac{f^*}{w}\right)\,w =\int_{\supp f} \varphi\left(\frac{f^*\circ\tau}{w\circ
\tau}\right)\,w\circ\tau = \int_{\supp f} \varphi\left(\frac{|f|}{w\circ\tau}\right)\,w\circ\tau =
\int_I \varphi\left(\frac{|f|}{v}\right)\,v.
\]
Thus we get that the left-hand side of  the equality in (a) is greater than the right-hand side.

For case (ii) observe that if $a<\infty$ then we can find a measure preserving transformation $\tau$ on $I$ such that $|f|= f^*\circ \tau$. Then $v= w \circ \tau > 0$ and $v\sim w$. Thus
\begin{align*}
\int_I \varphi\left(\frac{f^*}{w}\right) w &= \int_I \varphi\left(\frac{f^*\circ\tau}{w\circ\tau}\right) w\circ \tau =
\int_I \varphi\left(\frac{|f|}{w\circ\tau}\right) w\circ \tau\\
&\ge \inf \left\{\int_I \varphi\left(\frac{|f|}{w\circ\sigma}\right)w\circ\sigma: \sigma\ \text{measure preserving mapping from $I$ to $I$}\right\}\\
 &\ge \inf \left\{\int_I \varphi\left(\frac{|f|}{v}\right)v: {v\sim w}, v > 0 \right\}.
 \end{align*}
As for the converse direction we need to prove that for every weight $v\sim w$ and $v\ge 0$, we have

\begin{equation}\label{eq:22b}
\int_I \varphi\left(\frac{f^*}{w}\right) w \le\int_I \varphi\left( \frac{|f|}{v}\right)v.
\end{equation}

If there exists $A\subset I$ with $|A| > 0$ such that $f(t) \neq 0$ and $v(t) = 0$ a.e. on $A$, then
(\ref{eq:22b}) is satisfied since its right side is equal to infinity. Thus assume further that $\supp f \subset \supp v$.
By standard approximation argument we may assume that $f$ is a simple function, and
consequently that $f^*$ is a decreasing step function as below
\[
f^*=\sum_{i=1}^N x_i^*\chi_{A_i},\qquad |f|=\sum_{i=1}^N x_i^*\chi_{E_i}
\]
with $x_i^*>0$, $A_i=[a_{i-1},a_i)$, $0=a_0<a_1<\dots <a_N<\infty$ and  $|E_i|=|A_i|=a_i-a_{i-1}$, $i=1,\dots,N$.
Let $A=[0,a_N)$, $E=\bigcup E_i$,  $v_E=v\chi_E$ and $v_E^* = (v_E)^*$. By the assumption that $E=\supp f \subset \supp v$ we have that $v_E > 0$ on $E$ and $v_E^* > 0$ on $A$. Moreover
 $v_E^*   \le v^*=w^*$.  Hence
\[
\int_I \varphi\left(\frac{f^*}{w}\right) w = \int_A \varphi\left(\frac{f^*}{w}\right) w
\le\int_A \varphi\left( \frac{f^*}{v^*_E}\right)v^*_E
=\int_I \varphi\left( \frac{f^*}{v^*_E}\right)v^*_E.
\]
For each $i$ there is a measure-preserving transformation $\tau_i:A_i\to E_i$. Let
$\tau: A\to E$ be the measure preserving transformation defined by $\tau(t)=\tau_i(t)$
whenever $t\in A_i$. Then $|f|\circ\tau= f^*$, $v_E\circ\tau$ is a positive function on $A$ with
$(v_E\circ\tau)^*=v_E^*$ and
\[
\int_I \varphi\left( \frac{|f|}{v}\right)v=\int_A \varphi\left(
\frac{f^*}{v_E\circ\tau}\right)\,v_E\circ\tau.
\]
Thus it is sufficient to prove (\ref{eq:22b}) with $v_E^*,
v_E\circ\tau,f^*,A$ respectively in place of $w,v,f,I$. Observe that both $v_E^*$ and $v_E\circ \tau$ are positive on $A$.
Therefore we can reduce the proof to the case where $I$ has finite measure, $f$ is a
decreasing, positive simple function and $v$ is positive on $I$. Now, another approximation argument
allows to reduce to the case where  $w$ is also a simple function
(approximating e.g. $w$ by $w+\eps$, $\eps>0$,  and then $w+\eps$
from below by simple decreasing functions $w_{n,\eps}\ge\eps$).
Note that if $v\sim w$ then $v=w\circ\sigma$ for some measure
preserving map $I\to I$, and $v$ is approximated by the simple
functions $w_{n,\eps}\circ\sigma$. Also, refining the partition if
necessary, we may suppose that the function $w$ is built on the
same intervals as the function $f^*=f$. Thus we may suppose
$w=\sum_{i=1}^N w_i\chi_{A_i}$ and $v=\sum_{i=1}^Nw_i\chi_{B_i}$
with $|B_i|=|A_i|$ and $w_i > 0$. Let $a_{ij}=|A_i\cap B_j|$. Since $(A_i)$ and
$(B_i)$ are two partitions of the interval $I$ we have $\sum_j
a_{ij}=|A_i|$, $\sum_i a_{ij}=|B_j|=|A_j|$, so $\sum_j
a_{ij}=\sum_j a_{ji}$ for all $i=1,\dots,N$. Finally applying  Corollary
\ref{cor:finite} we get
\[
\int_I \varphi\left(\frac{f^*}{w}\right)\,w
=\sum_{i,j=1}^Na_{ij}\varphi\left(\frac{x_i}{w_i}\right)\,w_i
\le \sum_{i,j=1}^Na_{ij}\varphi\left(\frac{x_i}{w_j}\right)\,w_j
=\int_I\varphi\left(\frac{f}{v}\right)\,v,
\]
and the proof is completed.
\end{proof}

If we wish to approximate $m(x)$ in Theorem \ref{th:infinite:equality1}, or $M(f)$ in Theorem \ref{integral:ineq} for $a=\infty$, by the infimum  over positive  $v$ or $w\circ\sigma$ where $\sigma$ is an automorphism of $\mathbb{N}$ or measure preserving transformation of  $(0,\infty)$, we need some additional assumptions on $w$ and $\varphi$ as we will see  below.
We start with a preparatory lemma.

\begin{lemma}\label{lem:2} Let $w$ be a positive decreasing weight function on $(0,\infty)$. Let $f\in L^0$ be such that
$lim_{t\to\infty}f^*(t) = 0$,  $|\supp f| = \infty$ and $|(\supp f)^c| > 0$. Then for every $T>0$ there exists a  measure preserving and surjective mapping $\sigma: (0,\infty)\to (0,\infty)$ such that
\[
\int_0^\infty \varphi\left(\frac{|f|}{w\circ\sigma}\right) w\circ\sigma \le \int_0^T \varphi\left(\frac{f^*}{w}\right) w + \int_T^\infty \varphi\left(\frac{f^*(t)}{w(2t)}\right) w(2t)\,dt.
\]
An analogous statement remains true in discrete case, where $f$ is replaced by a sequence,  $\sigma$ by a one-to-one and onto mapping of $\mathbb{N}$, and integration by summation.

\end{lemma}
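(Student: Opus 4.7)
My plan is to build $\sigma$ by shunting the excess support of $w\circ\sigma$ into a chosen subset of $(\supp f)^c$, so that on $\supp f$ the composition $w\circ\sigma$ coincides with $w$ on the part where $f^*$ is large and is dominated by $w(2\cdot)$ on the part where $f^*$ is small. Write $E=\supp f$ and $c=|E^c|\in(0,\infty]$. Since $|E|=\infty$ and $\lim_{t\to\infty}f^*(t)=0$ we have $\supp f^*=(0,\infty)$, and Ryff's Theorem (used already in the proof of Theorem \ref{integral:ineq}) provides a measure-preserving bijection $\tau\colon E\to(0,\infty)$ with $|f|=f^*\circ\tau$ on $E$. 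Pushing forward by $\tau$, the problem reduces to constructing a measure-preserving injection $\rho\colon(0,\infty)\to(0,\infty)$ satisfying (i) $\rho(s)=s$ for $s\in[0,T]$, (ii) $\rho(s)\le 2s$ for $s>T$, and (iii) $|(0,\infty)\setminus\rho((0,\infty))|=c$.

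To produce such $\rho$, I would choose a set $S\subset(T,\infty)$ with $|S|=c$ and $|S\cap(0,r]|\le r/2$ for every $r>0$, and let $\rho$ be the unique order-preserving measure-preserving bijection of $(0,\infty)$ onto $(0,\infty)\setminus S$. Property (i) is automatic because $S$ is disjoint from $[0,T]$; (ii) follows from $\rho(s)=s+|S\cap(0,\rho(s)]|\le s+\rho(s)/2$, i.e.\ $\rho(s)\le 2s$. Explicit choices of $S$ exist for both possibilities: if $c<\infty$, take $S=[R,R+c]$ with $R=\max(T,c)$; if $c=\infty$, take $S=\bigcup_{j\ge 1}[jT,(j+\tfrac12)T]$. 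A direct estimate in either case gives the required bound $|S\cap(0,r]|\le r/2$. One then defines $\sigma$ to be $\rho\circ\tau$ on $E$ together with any measure-preserving bijection $E^c\to S$ (possible since $|E^c|=|S|=c$); the resulting $\sigma$ is a measure-preserving surjection of $(0,\infty)$ onto itself.

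The remaining computation is bookkeeping. Since $f$ vanishes off $E$, the left-hand integral equals $\int_E\varphi(|f|/(w\circ\sigma))\,w\circ\sigma$; substituting $\sigma=\rho\circ\tau$ and $|f|=f^*\circ\tau$, then setting $s=\tau(t)$, yields $\int_0^\infty\varphi(f^*(s)/w(\rho(s)))\,w(\rho(s))\,ds$. Splitting at $T$: on $[0,T]$ the integrand equals $\varphi(f^*/w)\,w$ because $\rho(s)=s$ there; on $[T,\infty)$ the bound $\rho(s)\le 2s$, combined with the decreasing character of $t\mapsto\varphi(a/t)t$ noted in the introduction, gives the pointwise estimate $\varphi(f^*(s)/w(\rho(s)))w(\rho(s))\le\varphi(f^*(s)/w(2s))w(2s)$, and integration finishes the bound.

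The discrete case follows the same blueprint: let $\tau\colon\supp x\to\mathbb{N}$ be a bijection with $|x|=x^*\circ\tau$, and take $\rho\colon\mathbb{N}\to\mathbb{N}$ to be the injection fixing $\{1,\ldots,T\}$ whose image complement is either $\{R+1,\ldots,R+c\}$ with $R=\max(T,c)$ (when $c<\infty$) or $\{T+2,T+4,T+6,\ldots\}$ (when $c=\infty$); a direct count shows $\rho(i)\le 2i$ for $i>T$, and the rest of the argument is identical. The only step requiring real care is the selection of the skip set $S$ (or its discrete analog): arranging simultaneously $|S|=c$, $S\subset(T,\infty)$, and $|S\cap(0,r]|\le r/2$ is the heart of the construction. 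Once $S$ is fixed, the rest is a routine application of Ryff's Theorem and the monotonicity of $t\mapsto\varphi(a/t)t$.
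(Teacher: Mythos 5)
Your construction is correct and is essentially the paper's own: the paper's map $u\mapsto u+jT$ on $\tau^{-1}(jT,(j+1)T)$ is exactly the order-preserving, measure-preserving bijection onto the complement of a skip set of relative density at most $1/2$ (the vacated blocks $((2j-1)T,2jT)$ receiving $(\supp f)^c$), and both arguments close with the same two facts, namely $\rho(s)\le 2s$ and the monotonicity of $t\mapsto\varphi(a/t)t$. The only substantive difference is that you handle the case $|(\supp f)^c|<\infty$ explicitly via the choice $S=[R,R+c]$, whereas the paper carries out the details only when $|(\supp f)^c|=\infty$.
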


\begin{proof}
We shall prove this only in the case when $f\in L^0$ is such that $\lim_{t\to\infty}f^*(t) = 0$ and $|\supp f| = |(\supp f)^c| = \infty$. Then there exist $\tau: \supp f \to \supp f^* = (0,\infty)$ and $\rho:  (\supp f)^c\to (0,\infty)$ such that both are measure preserving and onto, and moreover $|f|=f^*\circ \tau$. Let $T>0$. Define
   \[
  \sigma(t)=  \begin{cases}
     &  \tau(t)+jT\ \  \text{if}\ \ t\in \tau^{-1}(jT,(j+1)T) , j=0,1,\dots; \\
     &  \rho(t)+(j-1)T\ \  \text{if}\ \ t\in  \rho^{-1}(jT,(j+1)T) , j=1,2,\dots.
  \end{cases}
  \]
  It is easy to check that $\sigma:(0,\infty)\to (0,\infty)$ is measure preserving and onto.
Then
  \begin{align*}
\int_0^\infty &\varphi\left(\frac{|f|}{w\circ\sigma}\right) w\circ\sigma = \int_{\supp f} \varphi\left(\frac{|f|}{w\circ\sigma}\right) w\circ\sigma
=\int_{\supp f} \varphi\left(\frac{f^*\circ\tau}{w\circ\sigma}\right) w\circ\sigma\\
= &\int_{\tau^{-1}(0,T)}\varphi\left(\frac{f^*\circ\tau}{w\circ\tau}\right) w\circ\tau + \sum_{j=1}^\infty
\int_{\tau^{-1}(jT,(j+1)T)}\varphi\left(\frac{f^*(\tau(t))}{w(\tau(t)+jT)}\right)w(\tau(t)+jT)\,dt \\
=& \int_0^T \varphi\left(\frac{f^*}{w}\right) w +  \sum_{j=1}^\infty \int_{jT}^{(j+1)T} \varphi\left(\frac{f^*(u)}{w(u+jT)}\right)
w(u+jT)\,du.
\end{align*}

Now by decreasing monotonicity of the function $t\mapsto \varphi(c/t)t$ for any $c>0$, we have that for $u\in (jT,(j+1)T)$
it holds
\[
\varphi\left(\frac{f^*(u)}{w(u+jT)}\right) w(u+jT) \le \varphi\left(\frac{f^*(u)}{w(2u)}\right) w(2u).
\]
Hence
\begin{align*}
\int_0^\infty \varphi\left(\frac{|f|}{w\circ\rho}\right) w\circ\rho
&  \le \int_0^T \varphi\left(\frac{f^*}{w}\right) w +
\sum_{j=1}^\infty \int_{jT}^{(j+1)T} \varphi\left(\frac{f^*(u)}{w(2u)}\right) w(2u)\,du \\
& = \int_0^T \varphi\left(\frac{f^*}{w}\right) w + \int_T^\infty \varphi\left(\frac{f^*(u}{w(2u)}\right) w(2u)\, du,
\end{align*}
and the proof is completed.

\end{proof}

Now we are ready to present more refined results than Theorems \ref{th:infinite:equality1} and \ref{integral:ineq}, but with an additional assumption of control of the decreasing slope of $w$ in relation to $\varphi$.

\begin{definition} A positive weight function (resp., sequence) $w$ is said to be controlled by the Orlicz function $\varphi$ (or shortly ``to be {\it $\varphi$-controlled}\,'') if for some $K>0$ and every $c>0$, $t\in I$ with $2t\in I$, (resp., every $n\in \mathbb{N}$) it holds
\[
\varphi\left(\frac c {w(2t)}\right)w(2t)\le K\varphi\left(\frac c {w(t)}\right)w(t)
\ \ \left(\text{resp.,}\ \ \ \varphi\left(\frac c {w(2n)}\right)w(2n)\le K\varphi\left(\frac c {w(n)}\right)w(n)\right).
\]
\end{definition}
\begin{remark} (i) Trivial cases are the constant weights that are controlled by every Orlicz function, and the Orlicz function $\varphi(t)=t$ which controls every weight.
\par (ii) If $\varphi$ and $1/w$ satisfy $\Delta_2$ condition, then $w$ is $\varphi$-controlled. Notice that if $w$ is regular then $1/w$ satisfies $\Delta_2$-condition. In fact, by regularity of $w$ for some $C>0$ and all $t\in I$, $W(t) \le C tw(t)$. Hence $tw(t) \le W(2t) \le 2Ctw(2t)$ and so $1/w(2t) \le 2C/w(t)$ which means that $1/w$ satisfies $\Delta_2$ condition.
\end{remark}

Recall that the dilation operator $D_2$ is defined for $f\in L^0$ as $D_2f(t) = f(t/2)$, $t\in I$, and for a sequence $x$ as $D_2 x (n) = x(\lceil n/2\rceil)$.

\begin{theorem}\label{integral:ineq2} Let $\varphi$ be an Orlicz function. $\rm(i)$ Let $w$ be a $\varphi$-controlled decreasing  weight function on $I = (0,\infty)$.   Then for every $f\in L^0$,
\begin{align*}
\int_0^\infty \varphi\left(\frac{f^*}{w}\right) w
&= \inf \left\{\int_0^\infty \varphi\left(\frac{|f|}{v}\right)v: {v\sim w}, v > 0 \right\}\\
&= \inf \left\{\int_0^\infty \varphi\left(\frac{|f|}{w\circ\sigma}\right)w\circ\sigma: \sigma\ \text{ measure preserving mapping from $I$ to $I$}\right\}.
\end{align*}

$\rm(ii)$ Let $w=\{w(n)\}$ be a $\varphi$-controlled decreasing weight sequence.  Then for any $x=\{x(n)\}$,
\begin{align*}
\sum_{n=1}^\infty \varphi\left(\frac{x^*(n)}{w(n)}\right) w(n)& =
\inf\left\{\sum_{n=1}^\infty \varphi\left(\frac{|x(n)|}{v(n)}\right)
v(n): v\sim w, v>0\right\}\\
& = \inf\left\{\sum_{n=1}^\infty \varphi\left(\frac{|x(n)|}{w\circ
\sigma(n)}\right) w\circ \sigma(n): \sigma \ \text{automorphism of $\mathbb{N}$}\right\}.
\end{align*}
If $w$ is not necessarily $\varphi$-controlled then the above
equalities hold true for those functions $f$ or sequences $x$ for which $M(D_2f)< \infty$ or $m(D_2x)< \infty$, respectively.

\end{theorem}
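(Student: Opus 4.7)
The proof naturally splits into the easy direction and its converse. For the easy direction, the chain
\[
\int_0^\infty \varphi(f^*/w)\, w \;\le\; \inf_{v\sim w,\,v>0} \int_0^\infty \varphi(|f|/v)\, v \;\le\; \inf_\sigma \int_0^\infty \varphi(|f|/w\circ\sigma)\, w\circ\sigma
\]
follows at once from Theorem \ref{integral:ineq}(i), since the positive $v\sim w$ form a subfamily of all non-negative $v\sim w$, and the weights $w\circ\sigma$ for $\sigma$ measure preserving onto $I$ form a still smaller subfamily. The entire content of (i) therefore lies in the reverse bound $\inf_\sigma \int \varphi(|f|/w\circ\sigma)\, w\circ\sigma \le \int \varphi(f^*/w)\, w$.

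If $M(f)=\infty$ there is nothing to prove, so assume $M(f)<\infty$. By transposing to the continuous setting the argument from Lemma \ref{l:Infinite:ineq} (as $w>0$ is decreasing on $(0,\infty)$, the set $\{t: w(t)\le b\}$ has infinite measure for suitable $b$), finiteness of $M(f)$ forces $f^*(t)\to 0$. When $|(\supp f)^c|=0$ or $|\supp f|<\infty$, Ryff's theorem supplies a measure preserving surjection $\tau$ of $I$ with $|f|=f^*\circ\tau$ (in the second case after extending by a measure preserving bijection between the cosupports of $f$ and $f^*$, both of infinite measure), and then $\sigma=\tau$ already yields equality. So I may assume $|\supp f|=\infty$ and $|(\supp f)^c|>0$, which is exactly the hypothesis of Lemma \ref{lem:2}.

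Given $\eps>0$, use $M(f)<\infty$ to pick $T$ so large that $\int_T^\infty \varphi(f^*/w)\, w < \eps/K$, where $K$ is the $\varphi$-control constant. Lemma \ref{lem:2} then produces a measure preserving surjection $\sigma$ of $(0,\infty)$ with
\[
\int_0^\infty \varphi(|f|/w\circ\sigma)\, w\circ\sigma \;\le\; \int_0^T \varphi(f^*/w)\, w + \int_T^\infty \varphi\!\left(\frac{f^*(t)}{w(2t)}\right) w(2t)\, dt.
\]
By the $\varphi$-control hypothesis the last integrand is bounded by $K\varphi(f^*(t)/w(t))\,w(t)$, so the tail is at most $\eps$, whence $\int \varphi(|f|/w\circ\sigma)\, w\circ\sigma \le \int_0^\infty \varphi(f^*/w)\, w + \eps$. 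Letting $\eps\downarrow 0$ finishes part (i).

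Part (ii) follows by the same scheme using the discrete version of Lemma \ref{lem:2}, with the preliminary reduction to $w>0$ and $x(n)\to 0$ proceeding exactly as in the proof of Theorem \ref{th:infinite:equality1}. For the concluding sentence, observe that $\varphi$-control was only used to force the tail $\int_T^\infty \varphi(f^*(t)/w(2t))\, w(2t)\, dt$ to vanish as $T\to\infty$; but the substitution $u=2t$ identifies this integral with $\tfrac12 \int_{2T}^\infty \varphi((D_2 f)^*/w)\, w$, which is dominated by $\tfrac12 M(D_2 f)$ and therefore tends to $0$ precisely when $M(D_2 f)<\infty$ (and analogously with sums when $m(D_2 x)<\infty$). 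I expect the main obstacle to be not the analytic estimate itself—essentially a one-line application of $\varphi$-control—but the bookkeeping required to cast each configuration of the support of $f$ into a form to which Lemma \ref{lem:2} or Ryff's theorem directly applies.
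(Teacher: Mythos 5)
Your proposal is correct and follows essentially the same route as the paper: the easy chain of inequalities is inherited from Theorem \ref{integral:ineq}, the case $|\supp f|<\infty$ or $\supp f=I$ is handled by Ryff's theorem, and the remaining case is dispatched by choosing $T$ so that the tail $\int_T^\infty \varphi\left(f^*(t)/w(2t)\right)w(2t)\,dt$ is small (via the $\varphi$-control constant) and invoking Lemma \ref{lem:2}. Your closing observation that the substitution $u=2t$ turns this tail into $\tfrac12\int_{2T}^\infty \varphi\left((D_2f)^*/w\right)w$ is a clean justification of the final sentence of the theorem, which the paper leaves implicit.
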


\begin{proof}
 We shall only prove $\rm(i)$. By the proof of Theorem \ref{integral:ineq} the left-hand side of the first equality is less than its right-hand side, which in turn is less than the right-hand side of the second equality. For a converse it is enough to show that for any $\epsilon > 0$ there is a measure preserving and onto mapping $\sigma$ of $(0,\infty)$ such that $\int_0^\infty \varphi\left(\frac{|f|}{w\circ\sigma}\right) w\circ\sigma \le M(f) + \epsilon$. Assume that $f\in L^0$ is such that
\[
M(f) = \int_0^\infty \varphi\left(\frac{f^*}{w}\right)w < \infty.
\]
It follows that $\lim_{t\to\infty} f^*(t) = 0$. If $|\supp f| < \infty$ or $\supp f = I$, then there exists a measure preserving transformation $\tau$ of $I$ such that $|f| = f^* \circ \tau$. Hence
$M(f) = \int_0^\infty \varphi\left(\frac{|f|}{w\circ\tau}\right) w\circ\tau$, and the equalities hold.

Suppose now that $|\supp f| = \infty$ and $|(\supp f)^c| > 0$. By the assumption that $w$ is  $\varphi$-controlled,
 there is $C>0$ such that  $\varphi\left(\frac \alpha{w(2t)}\right) w(2t)\le C\varphi\left(\frac \alpha{w(t)}\right) w(t)$ for every $\alpha,t>0$. So
\[
\int_0^\infty \varphi\left(\frac{f^*(t)}{w(2t)}\right) w(2t)\, dt \le  CM(f) < \infty.
\]
Then for any $\epsilon > 0$ there exists $T>0$ such that
\[
\int_T^\infty \varphi\left(\frac{f^*(t)}{w(2t)}\right) w(2t)\, dt <  \epsilon.
\]
An appeal to Lemma \ref{lem:2} gives now a suitable measure preserving mapping.

\end{proof}

The next result and its corollary provide another useful reformulations of $m(x)$ or $M(f)$.

\begin{proposition}

Let $w$ be a weight function. Then for every $f\in L^0$,
\begin{equation*}\label{eq:222}
\inf\left\{ \int_I \varphi\left(\frac{|f|}{v}\right) v: v^*\le w, v>0\right\} = \inf\left\{\int_I \varphi\left(
\frac{|f|}{v}\right)v: v\sim w, v>0\right\}.
\end{equation*}
Let $w=\{w(n)\}$ be a weight sequence. Then for
every sequence $x=\{x(n)\}$,
\begin{equation*}\label{eq:111}
\inf \left\{\sum_{n=1}^\infty \varphi\left(\frac{|x(n)|}{v(n)}\right) v(n):  v^*\le w, v>0\right\} =
\inf\left\{\sum_{n=1}^\infty \varphi\left(\frac{|x(n)|}{v(n)}\right) v(n):  v\sim w, v>0\right\}.
\end{equation*}
In the above formulas we can replace simultaneously in both sides  $v>0$ by $v\ge 0$.
\end{proposition}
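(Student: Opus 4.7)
The plan is to prove all four equalities (function versus sequence, $v>0$ versus $v\ge 0$) by a common two-step argument based on Theorem \ref{integral:ineq}(i) (resp.\ Theorem \ref{th:infinite:equality1}) and the pointwise decreasing monotonicity of $t\mapsto \varphi(c/t)\,t$ recalled in Section \ref{prelim}. The inequality ``$\le$'' in each formula is immediate from set inclusion: whenever $v\sim w$ one has $v^*=w\le w$, so $\{v:v\sim w\}\subset\{v:v^*\le w\}$ (intersected with either $\{v>0\}$ or $\{v\ge 0\}$).

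For the reverse inequality, I would fix an admissible $v$ with $v^*\le w$ and show that its integral/sum is bounded below by $M(f)$ (resp.\ $m(x)$). The key observation is that $v^*$ is itself a decreasing weight and $v\sim v^*$ by definition; hence applying Theorem \ref{integral:ineq}(i) with the weight $v^*$ in place of $w$ and $v$ as a competitor in the resulting infimum yields
\[
\int_I \varphi\!\left(\frac{f^*}{v^*}\right) v^* \;\le\; \int_I \varphi\!\left(\frac{|f|}{v}\right) v,
\]
and analogously for sums via Theorem \ref{th:infinite:equality1}. Combined with the pointwise inequality $\varphi(f^*/v^*)\,v^*\ge \varphi(f^*/w)\,w$, which follows from $v^*\le w$ and the monotonicity of $t\mapsto\varphi(c/t)\,t$, integration gives
\[
\int_I \varphi\!\left(\frac{|f|}{v}\right) v \;\ge\; \int_I \varphi\!\left(\frac{f^*}{v^*}\right) v^* \;\ge\; \int_I \varphi\!\left(\frac{f^*}{w}\right) w \;=\; M(f).
\]
Taking the infimum over admissible $v$ and recalling that $M(f)=\inf\{\int_I\varphi(|f|/v)v : v\sim w,\,v\ge 0\}$ closes the argument for the $v\ge 0$ version.

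For the $v>0$ version, I would split into two cases. When $w>0$ a.e.\ on $I$, every $v\sim w$ is automatically positive a.e., so the right-hand infima over $v>0$ and $v\ge 0$ coincide, and the argument above applies verbatim. When $w$ vanishes on a set of positive measure, then $v\sim w$ forces $v=0$ on a set of the same measure, and $v^*\le w$ with $v>0$ everywhere forces $|\supp v^*|=|I|>|\supp w|$, a contradiction; hence both sides are $+\infty$ vacuously. The sequence version is identical in structure, with Theorem \ref{th:infinite:equality1} playing the role of Theorem \ref{integral:ineq}(i) and an analogous discussion of the degenerate case $\supp w\neq \mathbb{N}$.

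The only point demanding a bit of care is the pointwise inequality $\varphi(f^*/v^*)\,v^*\ge \varphi(f^*/w)\,w$ at points where $v^*$ or $w$ vanishes: the assumption $v^*\le w$ rules out $v^*>0=w$, leaving only the cases $v^*=w=0$ (both sides agree under the Section \ref{prelim} convention) and $v^*=0<w$ (the left side is $0$ if $f^*=0$ and $+\infty$ otherwise, while the right side is finite, so the inequality holds trivially). With this verification in place, the proof is essentially assembled from two applications of the preceding theorems.
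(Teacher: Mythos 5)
Your treatment of the $v\ge 0$ version is correct, and it is a genuinely different route from the paper's: the chain $\int_I\varphi(|f|/v)\,v\ \ge\ \int_I\varphi(f^*/v^*)\,v^*\ \ge\ \int_I\varphi(f^*/w)\,w=M(f)$, obtained by applying Theorem \ref{integral:ineq}(i) with the decreasing weight $v^*$ in place of $w$ and then the monotonicity of $t\mapsto\varphi(c/t)t$ (with the degenerate cases handled exactly as you indicate), combined with $\{v:v\sim w\}\subset\{v:v^*\le w\}$, settles that case without the paper's explicit Ryff-type construction.

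The gap is in the $v>0$ version on $(0,\infty)$ and in the sequence case. You reduce it to the claim that if $w>0$ a.e.\ then every $v\sim w$ with $v\ge 0$ is automatically positive a.e., so that the two right-hand infima coincide (and equal $M(f)$). This is true on a finite interval but false when $|I|=\infty$: take $w\equiv 1$ on $(0,\infty)$ and $v=\chi_{(0,\infty)\setminus(1,2)}$ (or $w\equiv 1$ on $\mathbb N$ and $v=\chi_{2\mathbb N}$); then $v^*\equiv 1=w$, so $v\sim w$, yet $v$ vanishes on a set of positive measure. The identity $|\supp v|=|\supp w|$ does not force $v>0$ a.e.\ when both measures are infinite. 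Consequently $\inf\{\int_I\varphi(|f|/v)v:\ v\sim w,\ v>0\}$ may exceed $M(f)$ --- this is precisely the phenomenon that makes the paper introduce the $\varphi$-controlled hypothesis in Theorem \ref{integral:ineq2} --- so your lower bound of the left-hand side by $M(f)$ does not yield the needed inequality $\inf\{\cdots:\ v^*\le w,\ v>0\}\ \ge\ \inf\{\cdots:\ v\sim w,\ v>0\}$. The paper closes this case differently: for each positive $k$ with $k^*\le w$ it builds, via Ryff's theorem applied after splitting $I$ at the level $\alpha=\lim_{t\to\infty}k^*(t)$, a rearrangement $v$ of $w$ satisfying $v\ge k$ \emph{pointwise} (hence $v>0$ whenever $k>0$), whence $\int_I\varphi(|f|/v)v\le\int_I\varphi(|f|/k)k$ by monotonicity of $t\mapsto\varphi(c/t)t$. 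Some such pointwise-domination construction, preserving positivity, is needed for the $v>0$ statement; your argument as written only proves the $v\ge 0$ version and the $v>0$ version for finite $I$.
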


\begin{proof} We shall prove it only for the function case, assuming $I=(0,\infty)$. Since for
every $v\sim w$, we have $v^*=w$, the left side is less than the right side. In
the opposite direction we assume that the left side is finite. Consider any measurable positive
function $k:I\to\mathbb (0,\infty)$ with $k^*\le w$ and $\int\varphi(|f|/k)k<\infty$.

Let $\alpha=\lim_{t\to\infty}k^*(t)$ and  $A= \{t: k(t) > \alpha\}$. Consider two cases.

Let first $|A| = \infty$. Then $((k-\alpha)\chi_A)^* = k^* - \alpha$ on $I$, and $\supp (k-\alpha)\chi_A = A$. By Ryff's theorem there exists an onto measure preserving transformation $\tau:A\to I$ such that for $t\in A$, $(k-\alpha)\chi_A(t) = (k-\alpha)^*\circ \tau(t)$. Hence for $t\in A$, $k(t) - \alpha = k^*\circ\tau(t) - \alpha$, and so $k(t) = k^*\circ\tau(t)$ on $A$.
Define $v(t) = w\circ\tau(t)$ if $t\in A$, and $v(t) = k(t)$ if $t\notin A$. Since $k^* \le w$, so for $t\in A$, $k(t) = k^*\circ\tau(t) \le w\circ(t) = v(t)$. Thus $k\le v$ on $I$. Moreover, since for $t\in A$, $v(t) = w\circ\tau(t) \ge k(t) > \alpha$, and for $t\notin A$, $v(t) \le \alpha$, in view of $|A| = \infty$, we have that for $t\in I$, $v^*(t) = (v\chi_A)^*(t) = ((w \circ\tau)\chi_A)^*(t) = w(t)$.

Let now $|A| < \infty$. Then let $B= \{t:  k(t) = \alpha\}$. Since $\lim_{t\to\infty} k^*(t) = \alpha$, we have $|B| = \infty$.
Then there exists $\tau:A \to \{t: k^*(t)>\alpha\}$ and $\tau:B\to \{t: k^* = \alpha\}$, such that $\tau:A\cup B \to I$ is onto and measure preserving. Moreover $k(t) = k^*\circ\tau(t)$ for $t\in A\cup B$. Let now $v(t) = w\circ\tau(t)$ for $t\in A\cup B$, and $v(t) = k(t)$ otherwise. Clearly $v\ge k$ on $I$. In view of $v(t) \ge \alpha$ on $A\cup B$ and $|A\cup B| = \infty$, we have that $(v\chi_{A\cup B})^* = v^*$. Hence $v^* = ((w\circ\tau)\chi_{A\cup B})^* = w$.

In both cases we have found $v$ such that  $v^*=w$ and $v\ge k$. Hence
\[
\int_I \varphi\left( \frac{|f|}{v}\right)v\le \int_I \varphi\left( \frac{|f|}{k}\right)k.
\]
We complete the proof taking infimum first with respect to $v\sim w$ and then $k$ such that  $k^*\le w$.
\end{proof}

\begin{corollary}\label{cor:2} For any function $f\in L^0$,
\[
M(f) = \int_I \varphi\left(\frac{f^*}{w}\right) w = \inf \left\{ \int_I \varphi\left(\frac{|f|}{v}\right) v: v^* \le w, v \ge 0 \right\},
\]
and for any sequence $x= \{x(n)\}$,
\[
m(x) = \sum_{n=1}^\infty \varphi\left(\frac{x^*(n)}{w(n)}\right) w(n) = \inf \left\{ \sum_{n=1}^\infty \varphi\left(\frac{|x(n)|}{v(n)}\right): v^* \le w, v\ge 0\right\}.
\]

\end{corollary}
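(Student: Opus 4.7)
The plan is to observe that Corollary \ref{cor:2} is an immediate consequence of chaining two previously established results, so almost no new work is needed. First, Theorem \ref{integral:ineq}(i) already identifies
\[
M(f) = \int_I \varphi\!\left(\frac{f^*}{w}\right) w = \inf\left\{\int_I \varphi\!\left(\frac{|f|}{v}\right)v : v\sim w,\ v\ge 0\right\},
\]
and the preceding Proposition, in its ``$v\ge 0$'' formulation that is explicitly granted in its last sentence, asserts
\[
\inf\left\{\int_I \varphi\!\left(\frac{|f|}{v}\right)v : v\sim w,\ v\ge 0\right\}
= \inf\left\{\int_I \varphi\!\left(\frac{|f|}{v}\right)v : v^*\le w,\ v\ge 0\right\}.
\]
Splicing these two identities gives the function-case statement. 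For the sequence case I would argue identically, using Theorem \ref{th:infinite:equality1} in place of Theorem \ref{integral:ineq}(i) together with the sequence half of the same Proposition.

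If one wished to exhibit the underlying mechanism rather than cite the Proposition, the key step is the standard observation that any $v\ge 0$ with $v\sim w$ trivially satisfies $v^*=w\le w$, giving one inequality for free; the reverse inequality requires, given a general $v\ge 0$ with $v^*\le w$, the construction of some $\tilde v\ge v$ with $\tilde v^*=w$ (so that $\int\varphi(|f|/\tilde v)\tilde v\le \int\varphi(|f|/v)v$ by the usual fact that $t\mapsto \varphi(c/t)t$ is decreasing), and this construction is exactly what the Proposition carries out by a Ryff-type rearrangement of $v$ on its upper level sets. Since all the content sits in already-proven statements, there is no genuine obstacle here; the only thing to be careful about is that the $v\ge 0$ (rather than $v>0$) versions of both cited results are used, so that degenerate cases where $v$ or $w$ vanishes are covered through the standing convention $\varphi(c/0)\cdot 0 = \infty$ when $c>0$.
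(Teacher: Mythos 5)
Your proposal is correct and matches the paper's intent exactly: the corollary is stated without a separate proof precisely because it follows by splicing Theorem \ref{integral:ineq}(i) (resp.\ Theorem \ref{th:infinite:equality1}) with the ``$v\ge 0$'' form of the preceding Proposition, which is what you do. Your sketch of the underlying mechanism (enlarging a $v$ with $v^*\le w$ to some $\tilde v\ge v$ with $\tilde v^*=w$ and using the monotonicity of $t\mapsto\varphi(c/t)t$) also faithfully reflects the Proposition's own proof.
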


\section{The classes $M_{\varphi,w}$}\label{sec:Classes M}

In this section we investigate several aspects of the classes $M_{\varphi,w}$, which were defined in section \ref{prelim}: linear structure, concavity and K\"othe duality.

\subsection{Linear structure}

It is known  from the general theory  \cite[Lemma 1.4]{KR} that $M_{\varphi,w}$, resp. $m_{\varphi,w}$, is a linear space and the corresponding homogeneous functional $ \|\cdot \|_M$, resp. $ \|\cdot \|_m$, is a quasi-norm if and only if the dilation operator $D_2$  on the class $M_{\varphi,w}$, resp. $m_{\varphi,w}$, is bounded.  In this case $M_{\varphi,w}$, resp. $m_{\varphi,w}$ are complete, and thus quasi-Banach spaces.
 In view of \cite[Proposition 4.5]{KR} a simple sufficient condition for boundedness of $D_2$ on $M_{\varphi,w}$ is the following inequality
\begin{equation}\label{doubling condition}
 \varphi\left(\frac c {w(2t)}\right)w(2t)\le  \varphi\left(\frac {C c} {w(t)}\right)w(t)
\end{equation}
for some $C>0$ and all $c >0$, $t>0$  with $2t\in I$.
By the fact that $t\mapsto \varphi(c/t) t$ is decreasing for $t>0$ and any $c>0$, it holds in particular if $1/w$ verifies condition $\Delta_2$ (regardless of $\varphi$). Recall that any function $h:I \to \mathbb{R}_+$ satisfies condition $\Delta_2$ if $h(2t) \le K h(t)$ for some $K>0$ and all $t\in I$ such that $2t \in I$.

The class $M_{\varphi,w}$ cannot be linear when $D_2$ is not bounded on it. More precisely we have the following result.

\begin{lemma}\label{lem:quasi-norm}
The following assertions are equivalent.
\begin{itemize}
\item[(\rm i)] The dilation operator $D_2$ acts  on the class $M_{\varphi,w}$.
\item[(\rm ii)] The dilation operator $D_2$ acts  and is bounded on the class $M_{\varphi,w}$.
\item[(\rm iii)] The class $M_{\varphi,w}$ is linear.
\item[(\rm iv)]  The class $M_{\varphi,w}$ is linear and $\|\cdot \|_M$ is a quasi-norm.
\end{itemize}
\end{lemma}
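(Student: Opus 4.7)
My plan is to prove the two nontrivial equivalences (i) $\iff$ (iii) directly, by a convexity argument in one direction and a splitting construction in the other, and to stitch everything together with the trivial implications (ii) $\Rightarrow$ (i) and (iv) $\Rightarrow$ (iii) plus the known equivalence (iv) $\iff$ (ii) coming from \cite[Lemma 1.4]{KR}. Closing the cycle requires one further implication, either (i) $\Rightarrow$ (ii) or (iii) $\Rightarrow$ (iv), and this is where I expect the main technical difficulty.

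For (i) $\Rightarrow$ (iii) I would use the classical rearrangement bound $(f+g)^*(t) \le f^*(t/2) + g^*(t/2) = (D_2 f)^*(t) + (D_2 g)^*(t)$. Given $f, g \in M_{\varphi,w}$, hypothesis (i) places $D_2 f$ and $D_2 g$ in $M_{\varphi,w}$, so some $\lambda > 0$ makes both $M(\lambda D_2 f)$ and $M(\lambda D_2 g)$ finite. Monotonicity of $\varphi$ combined with its pointwise convexity then yields
\[
M\bigl(\tfrac{\lambda}{2}(f+g)\bigr) \le \tfrac{1}{2}M(\lambda D_2 f) + \tfrac{1}{2}M(\lambda D_2 g) < \infty,
\]
so $f+g \in M_{\varphi,w}$; together with positive-homogeneity this is linearity.

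For (iii) $\Rightarrow$ (i) I would produce, for any $f \in M_{\varphi,w}$, an element of $M_{\varphi,w}$ whose decreasing rearrangement equals $D_2 f^*$; rearrangement-invariance of the class then places $D_2 f$ in $M_{\varphi,w}$. In the sequence setting, given $x$ set $y_1(2k-1) = x^*(k)$, $y_1(2k) = 0$, $y_2(2k-1) = 0$, $y_2(2k) = x^*(k)$; a short check gives $y_i^* = x^*$ (hence $y_i \in m_{\varphi,w}$) and $(y_1+y_2)(n) = x^*(\lceil n/2 \rceil) = D_2 x^*(n)$. In the function setting with $I = (0,\infty)$, fix disjoint measurable $A, B \subset I$ of infinite measure, measure-preserving bijections $\phi_A \colon I \to A$ and $\phi_B \colon I \to B$, and set $g_A = (f^* \circ \phi_A^{-1})\chi_A$, $g_B = (f^* \circ \phi_B^{-1})\chi_B$. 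Measure-preservation gives $g_A^* = g_B^* = f^*$, and disjointness of supports gives $d_{g_A+g_B}(s) = 2 d_f(s)$, so $(g_A+g_B)^* = D_2 f^*$. By (iii), $y_1+y_2$ (resp.\ $g_A+g_B$) lies in the class, and rearrangement-invariance finishes. The case $I = (0,a)$ with $a$ finite is handled by first truncating $f$ so that $|\supp f| \le a/2$ and then removing the truncation via the Fatou property of $M$.

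The hardest step I anticipate is closing the cycle. Since \cite[Lemma 1.4]{KR} delivers (iv) $\iff$ (ii), what remains is to go from (iii) to (iv), i.e., to force $\|\cdot\|_M$ to be a quasi-norm once $M_{\varphi,w}$ is known to be linear. The convexity estimate from the (i) $\Rightarrow$ (iii) step in fact yields the preliminary bound $\|f+g\|_M \le 2\max(\|D_2 f\|_M, \|D_2 g\|_M)$, so what is missing is a uniform constant $K$ with $\|D_2 f\|_M \le K\|f\|_M$. I expect to obtain this boundedness of the everywhere-defined linear operator $D_2$ from a closed-graph / Baire-type argument, exploiting the Fatou property of $M$ together with closedness of the graph of $D_2$ in $L^0 \times L^0$ with respect to convergence in measure; this uniform bound is the genuine technical obstacle.
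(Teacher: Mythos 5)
Your treatment of (i) $\iff$ (iii) is correct and is essentially the paper's: the inequality $(f+g)^*\le D_2f^*+D_2g^*$ together with convexity of $\varphi$ gives (i) $\Rightarrow$ (iii), and the identity $(f_1+f_2)^*=D_2f^*$ for two disjoint functions each equimeasurable with $f$ gives (iii) $\Rightarrow$ (i); your explicit disjoint copies are fine, and on a finite interval $(0,a)$ no Fatou step is even needed, since $D_2f^*$ depends only on the restriction of $f^*$ to $(0,a/2)$.

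The genuine gap is the closing implication, exactly where you locate the difficulty. A closed-graph or Baire-category argument for the boundedness of the everywhere-defined operator $D_2$ requires the domain to carry a complete metrizable linear topology, and at that stage you have none: all that is known is that $M_{\varphi,w}$ is a linear subset of $L^0$, and whether $\|\cdot\|_M$ generates such a topology (i.e.\ is a quasi-norm) is precisely the conclusion (iv) you are trying to reach, so the argument is circular. The subspace topology inherited from $L^0$ does not help, since $M_{\varphi,w}$ is not complete in measure. The paper closes the cycle by proving (i) $\Rightarrow$ (ii) directly with a gliding-hump construction that replaces Baire category by the $\sigma$-convexity of $\|\cdot\|_M$ on the cone of nonnegative decreasing functions, where $\|\cdot\|_M$ coincides with the Luxemburg norm of the genuine Banach function space $\mathcal L_{\varphi,w}$: if $D_2$ acted on $M_{\varphi,w}$ but were unbounded, one could choose decreasing $f_n\ge 0$ with $\|f_n\|_M=1$ and $\|D_2f_n\|_M>4^n$, form $f=\sum_{n}2^{-n}f_n$, which belongs to $M_{\varphi,w}$ by $\sigma$-convexity of that cone, and then $D_2f\ge 2^{-n}D_2f_n$ forces $\|D_2f\|_M\ge 2^{-n}\cdot 4^n=2^n$ for every $n$, contradicting $D_2f\in M_{\varphi,w}$. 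You should replace the closed-graph step by this (or an equivalent) concrete construction; the $\sigma$-convexity of the decreasing cone is the missing ingredient in your outline.
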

A trivial, but useful fact which will be used in the proof of this lemma is that the cones of decreasing, nonnegative functions in $M_{\varphi,w}$ and in the Banach function space $\mathcal L_{\varphi,w}$ coincide, where
$\mathcal L_{\varphi,w}=\{f\in L^0: \|f\|_{\mathcal L_{\varphi,w}}: = \inf\{\epsilon > 0: \int_I \varphi(|f|/(\epsilon w))w \le 1\}< \infty \}$,
  and the homogeneous functional $\|\cdot\|_M$ coincides with the norm $\|\cdot\|_{\mathcal L_{\varphi,w}}$ on this cone. In particular this cone is $\sigma$-convex that is closed under infinite convex combinations, and the restriction of the functional $\|\cdot\|_M$ to this cone is $\sigma$-convex.
\begin{proof}[Proof of Lemma \ref{lem:quasi-norm}]
(i) $\implies$ (iii) and (ii) $\implies$ (iv)  follow from the classical inequality $(f+g)^*\le D_2f^*+D_2g^*$.
(iii) $\implies $ (i) and (iv) $\implies$ (ii) follow from the equality $(f_1+f_2)^*=D_2f^*$ whenever $f_1$ and $f_2$ are two disjoint  functions both of which are equimeasurable with $f$.

Clearly (ii) implies (i). Assume that (i) holds true but not (ii). There exists a sequence $(f_n)$ in $M_{\varphi,w}$ with $\|D_2 f_n\|_M> 4^n \|f_n\|_M$ and $\|f_n\|_M \le 1$ for all $n\in\mathbb{N}$. We may assume w.l.o.g. that $f_n=f_n^*$ and $\|f_n\|_M=1$ for every $n\ge 1$. Set $f=\sum_{n=1}^\infty 2^{-n}f_n$. Since the cone of positive decreasing functions in $M_{\varphi,w}$ is $\sigma$-convex, it holds that $f\in M_{\varphi,w}$. Note that  $D_2f\ge 2^{-n}D_2f_n$ for every $n\ge 1$. Thus if $D_2f\in M_{\varphi,w}$ we obtain $\|D_2f\|_M\ge 2^{-n}\|D_2 f_n\|_M \ge 2^{-n}\times 4^n=2^n$ for every $n\ge 1$, a contradiction. Hence (i) implies (ii).
\end{proof}

Denote by $F_M$ the fundamental function of $M_{\varphi,w}$, defined as usual by
\[
F_M(t)=\|\chi_{(0,t)}\|_M, \ \ \ t\in I.
\]
Note that for every $t > 0$ such that $2t\in I$, $D_2\chi_{[0,t]}= \chi_{[0,2t]}$. Thus $D_2$ is bounded on characteristic functions if and only if the fundamental function $F_M$ verifies the condition $\Delta_2$. In particular if $F_M$ does not verify the condition $\Delta_2$ then the class $M_{\varphi, w}$ is not linear. Below we will give a partial converse of that statement.
But let us first give a simple explicit criterion for the fundamental function $F_M$ to verify condition $\Delta_2$. For $t\in I$ we set
\[
G_M(t)=  {1\over w(t)\varphi^{-1}\left({1\over tw(t)}\right)}.
\]
Note that the function $G_M$ is increasing. Indeed if $0<s\le t\in I$ then
\[w(t)\varphi^{-1}\left({1\over tw(t)}\right)\le w(t)\varphi^{-1}\left({1\over sw(t)}\right)\le w(s)\varphi^{-1}\left({1\over sw(s)}\right)\]
since the function $u\mapsto\varphi^{-1}(cu)/u$ is decreasing on $(0,\infty)$ for any $c > 0$.

\begin{lemma}\label{Fundamental M}
The  fundamental function $F_M$ verifies the condition $\Delta_2$ if and only if the function $G_M$ does. Moreover if this is the case, then the functions $F_M$ and $G_M$ are equivalent.
\end{lemma}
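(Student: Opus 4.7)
The plan is to produce a two-sided sandwich $G_M(t/2) \le F_M(t) \le G_M(t)$ and then deduce both halves of the lemma from it by a short bootstrapping argument.

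For the sandwich, the starting point is that $F_M(t)$ is the unique value of $\epsilon$ for which $\int_0^t \varphi(1/(\epsilon w(s))) w(s)\, ds = 1$: existence and uniqueness follow from continuity and strict monotonicity of the map $\epsilon \mapsto \int_0^t \varphi(1/(\epsilon w)) w$, together with the fact that it tends to $0$ as $\epsilon \to \infty$ and to $\infty$ as $\epsilon \to 0^+$. Because $w$ is decreasing and $u \mapsto \varphi(c/u)\,u$ is decreasing for each fixed $c>0$, the integrand is bounded above by $\varphi(c/w(t))\,w(t)$ on all of $(0,t)$, and bounded below by $\varphi(c/w(t/2))\,w(t/2)$ on the subinterval $(t/2,t)$. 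Taking $c = 1/F_M(t)$ in the defining equation yields
\[
1 \le t\,\varphi\!\left(\frac{1}{F_M(t)\,w(t)}\right) w(t), \qquad 1 \ge \frac{t}{2}\,\varphi\!\left(\frac{1}{F_M(t)\,w(t/2)}\right) w(t/2),
\]
and applying $\varphi^{-1}$ gives respectively $F_M(t) \le G_M(t)$ and $F_M(t) \ge G_M(t/2)$, using $G_M(t/2) = 1/[w(t/2)\,\varphi^{-1}(2/(tw(t/2)))]$.

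From the sandwich the rest is automatic. If $G_M$ satisfies $\Delta_2$ with constant $K$, then $G_M(t) \le K\,G_M(t/2) \le K\,F_M(t)$, which together with $F_M(t) \le G_M(t)$ gives the equivalence of $F_M$ and $G_M$; moreover $F_M(2t) \le G_M(2t) \le K\,G_M(t) \le K^2\,F_M(t)$, so $F_M$ inherits $\Delta_2$. Conversely, if $F_M$ satisfies $\Delta_2$ with constant $K$, then $G_M(t) \le F_M(2t) \le K\,F_M(t) \le K\,G_M(t)$ gives equivalence, and $G_M(2t) \le K\,F_M(2t) \le K^2\,F_M(t) \le K^2\,G_M(t)$ gives $\Delta_2$ for $G_M$.

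The main obstacle is the sandwich itself, and specifically the lower bound: one must localize the integral to the subinterval $(t/2,t)$ so that inverting $\varphi$ produces $G_M(t/2)$ rather than some expression that cannot be related to $G_M(t)$ without already invoking $\Delta_2$. The dilation factor $2$ arising here is precisely what forces the conclusion to be phrased as an equivalence modulo $\Delta_2$ rather than a pointwise identity between $F_M$ and $G_M$.
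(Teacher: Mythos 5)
Your proof is correct and follows essentially the same route as the paper: the same sandwich (the paper writes it as $F_M(t)\le G_M(t)\le F_M(2t)$, which is your $G_M(t/2)\le F_M(t)\le G_M(t)$ after replacing $t$ by $2t$), obtained by the same monotonicity bounds on the integrand $s\mapsto \varphi\bigl(c/w(s)\bigr)w(s)$ over $(0,t)$ and over the right half-interval, followed by the same bootstrapping. The only cosmetic difference is that you spell out the existence/uniqueness of the solution of $\int_0^t\varphi(1/(\epsilon w))w=1$ and the final $\Delta_2$ transfers in more detail than the paper does.
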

\begin{proof}
 Indeed, since for every $c>0$ the function $s\mapsto \varphi\left(\frac c{w(s)}\right)w(s)$ is increasing, we have for $t\in I$,
\[1=\int_0^t \varphi \left({1\over F_M(t)w(s)}\right)w(s)ds\le  t\varphi \left({1\over F_M(t)w(t)}\right)w(t),\]
and similarly
\[1=\int_0^{2t }\varphi \left({1\over F_M(2t)w(s)}\right)w(s)ds\ge \int_t^{2t }\varphi \left({1\over F_M(2t)w(s)}\right)w(s)ds\ge t\varphi \left({1\over F_M(2t)w(t)}\right)w(t).\]
Hence for $t>0$ such that $2t\in I$,
\begin{equation}\label{estim F_M}
F_M(t)\le G_M(t) \le F_M(2t).
\end{equation}
Thus if $F_M$ verifies condition $\Delta_2$, then $F_M$ and $G_M$ are equivalent and $G_M$ must also satisfy condition $\Delta_2$. Moreover rewriting (\ref{estim F_M}) as
\[G_M\left(t/2\right)\le F_M(t)\le G_M(t)\]
we see that if $G_M$ verifies $\Delta_2$ then $F_M$ and $G_M$ are equivalent and $F_M$ verifies also condition $\Delta_2$.
\end{proof}

Recall now the lower and upper (Matuszewska-Orlicz) indices \cite{MO, BS, LT2} for a function $h: I\to \mathbb{R_+}$,
\begin{align*}
\alpha_h &= \sup\{p\in \mathbb{R}: \exists C > 0\ \forall t\in I\ \forall\, 0<\lambda \le 1\ \ h(\lambda t) \le C \lambda^p h(t) \},
\\
\beta_h &= \inf\{p\in \mathbb{R}: \exists C > 0\ \forall t\in I\ \forall 0<\lambda \le 1\ \ h(\lambda t) \ge C \lambda^p h(t)\}.
\end{align*}
Clearly the indices are preserved by equivalent functions. Notice that for Orlicz function $\varphi$ we have  $I=\Bbb{R}_+$ in the definitions of indices.

\begin{lemma}
Assume that $\alpha_\varphi > 1$. Then the fundamental function of $M_{\varphi,w}$ satisfies condition $\Delta_2$ if and only if $1/w$ does.
\end{lemma}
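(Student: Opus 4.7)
The plan is to apply Lemma \ref{Fundamental M} to reduce the question to the equivalent assertion: $G_M$ satisfies condition $\Delta_2$ if and only if $1/w$ does. Setting $A(t) = 1/(tw(t))$ and $r(t) = w(t)/w(2t) \ge 1$, a direct computation shows $1/(2tw(2t)) = (r(t)/2)\,A(t)$, and hence
\[
\frac{G_M(2t)}{G_M(t)} = r(t) \cdot \frac{\varphi^{-1}\bigl(A(t)\bigr)}{\varphi^{-1}\bigl((r(t)/2)\, A(t)\bigr)}.
\]
The whole proof then amounts to comparing $r(t)$ with this ratio of values of $\varphi^{-1}$.

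For the direction $1/w \in \Delta_2 \Longrightarrow G_M \in \Delta_2$, which does not use $\alpha_\varphi>1$, suppose $r(t) \le K$. Since $r \ge 1$ we have $(r/2) A \ge A/2$, and concavity of $\varphi^{-1}$ (combined with $\varphi^{-1}(0)=0$) yields $\varphi^{-1}(A) \le 2\,\varphi^{-1}(A/2) \le 2\,\varphi^{-1}((r/2)A)$. Therefore $G_M(2t)/G_M(t) \le 2K$.

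For the converse I would first translate the hypothesis $\alpha_\varphi > 1$ into a sublinear growth bound for $\varphi^{-1}$: fix $p \in (1, \alpha_\varphi)$ and $C \ge 1$ with $\varphi(\lambda u) \le C\lambda^p\,\varphi(u)$ for $\lambda \in (0,1]$, $u > 0$; an elementary inversion yields
\[
\varphi^{-1}(sv) \le C^{1/p}\, s^{1/p}\, \varphi^{-1}(v), \qquad s \ge 1,\ v > 0.
\]
Assuming $G_M \in \Delta_2$ with constant $C_0$, the case $r(t) \le 2$ is trivial; in the case $r(t) > 2$, applying the displayed estimate with $s = r(t)/2 > 1$ gives $\varphi^{-1}(A)/\varphi^{-1}((r/2)A) \ge C^{-1/p}(r/2)^{-1/p}$, whence
\[
C^{-1/p}\, 2^{1/p}\, r(t)^{1 - 1/p} \le \frac{G_M(2t)}{G_M(t)} \le C_0.
\]
Since $p > 1$, the exponent $1 - 1/p$ is strictly positive, so $r(t)$ is uniformly bounded; that is, $1/w \in \Delta_2$.

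The key obstacle is the inversion step producing the exponent $1/p < 1$ for $\varphi^{-1}$, and the essential role of the hypothesis $\alpha_\varphi > 1$ is precisely that this exponent satisfies $1 - 1/p > 0$. Without that positivity, the bound on $G_M(2t)/G_M(t)$ could not control the a~priori unbounded quantity $r(t)$; indeed, for $\varphi(t) = t$ one has $r \cdot \varphi^{-1}(A)/\varphi^{-1}((r/2)A) = r \cdot (r/2)^{-1} = 2$ identically, so $G_M$ is automatically $\Delta_2$ while $1/w$ may fail to be so, showing that some assumption of this kind is unavoidable.
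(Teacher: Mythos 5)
Your proof is correct and follows essentially the same route as the paper: both reduce via Lemma \ref{Fundamental M} to the $\Delta_2$ condition for $G_M$, both handle the easy direction by concavity of $\varphi^{-1}$, and both handle the converse by converting $\alpha_\varphi>1$ into a power bound $\varphi^{-1}(su)\le \mathrm{const}\cdot s^{q}\varphi^{-1}(u)$ with $q<1$ for $s\ge 1$ (the paper quotes $\beta_{\varphi^{-1}}=1/\alpha_\varphi<1$, you derive the same bound by directly inverting the defining inequality of $\alpha_\varphi$), so that the ratio $r(t)=w(t)/w(2t)$ is controlled through a strictly positive power. The only cosmetic difference is your explicit factorization $G_M(2t)/G_M(t)=r(t)\,\varphi^{-1}(A)/\varphi^{-1}((r/2)A)$, which matches the paper's estimate after rearrangement.
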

\begin{proof}
In view of Lemma \ref{Fundamental M} we have only to prove that $G_M$ verifies $\Delta_2$  if and only $1/w$ does.

If $1/w$ satisfies $\Delta_2$, that is $C=\sup\limits_{t\in I}{ w(t)\over w(2t)}<\infty$ then using the fact that $w$ is decreasing and $\varphi^{-1}$ is concave we get for $t\in I$,
\[G_M(2t)= {\frac 1{w(2t)}\over \varphi^{-1}(\frac1{2t w(2t)})} \le {\frac C{w(t)}\over \varphi^{-1}(\frac1{2t w(t)})}\le   {\frac {C}{w(t)}\over \frac 12\varphi^{-1}(\frac1{t w(t)})}= 2CG_M(t),\]
and so $G_M$ fulfils  $\Delta_2$ condition.

Assume now that $G_M$ verifies the condition $\Delta_2$ and set $C=\sup\limits_{t\in I}{G_M(2t)\over G_M(t)} < \infty$. By hypothesis we have $\alpha_\varphi>1$, which implies that $\beta_{\varphi^{-1}} = 1/\alpha_\varphi<1$, that is for some $p<1$, $d\ge1$,  and all $u>0$, $\lambda\ge 1$   we have $\varphi^{-1}(\lambda u)\le d\lambda^p\varphi^{-1}(u)$ . Then for $2t\in I$ we have
\[ CG_M(t)\ge G_M(2t)\ge {\frac 1{w(2t)}\over d \left({w(t)\over w(2t)}\right)^p \varphi^{-1}\left(\frac 1{2tw(t)}\right)} = \frac 1d\left({w(t)\over w(2t)}\right)^{1-p}{\frac 1{w(t)}\over \varphi^{-1}\left(\frac 1{2tw(t)}\right)}\ge  \frac 1d\left({w(t)\over w(2t)}\right)^{1-p}G_M(t).
\]
Hence
$ {w(t)/ w(2t)}\le (dC)^{1\over 1-p}$ that is $1/w$ satisfies condition $\Delta_2$.
\end{proof}

Summing up the preceding results, we obtain a nice characterization of the normability for $M_{\varphi,w}$ classes, at least when $\varphi$ has  convexity ``better than 1''.

\begin{proposition}\label{prop:equiv linearity}
Assume that $\alpha_\varphi > 1$. Then the following assertions are equivalent.
\begin{itemize}
\item[(\rm i)] $M_{\varphi,w}$ is a linear space.
\item[(\rm ii)] $M_{\varphi,w}$ is a linear space, and $\|\cdot \|_M$ is a quasinorm.
\item[(\rm iii)] The fundamental function of  $M_{\varphi,w}$ satisfies condition $\Delta_2$.
\item[(\rm iii)] $1/w$ satisfies condition $\Delta_2$.
\end{itemize}
\end{proposition}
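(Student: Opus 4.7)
The plan is to establish the cycle (i) $\Longleftrightarrow$ (ii) $\Longrightarrow$ (iii) $\Longleftrightarrow$ (iv) $\Longrightarrow$ (ii), thereby getting all four assertions equivalent. Two of these equivalences are already fully in hand: (i) $\Longleftrightarrow$ (ii) is precisely the content of Lemma \ref{lem:quasi-norm}, while (iii) $\Longleftrightarrow$ (iv) is the lemma immediately preceding this Proposition, which is the only place where the assumption $\alpha_\varphi>1$ is actually used. So the real work is only to supply two short bridging implications.

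For (ii) $\Longrightarrow$ (iii): assuming $\|\cdot\|_M$ is a quasinorm, Lemma \ref{lem:quasi-norm} ensures that the dilation operator $D_2$ acts boundedly on $M_{\varphi,w}$. Since $D_2\chi_{(0,t)}=\chi_{(0,2t)}$ for every $t$ with $2t\in I$, taking the quasinorm of both sides gives
\[
F_M(2t)=\|\chi_{(0,2t)}\|_M=\|D_2\chi_{(0,t)}\|_M\le \|D_2\|\,F_M(t),
\]
which is exactly the $\Delta_2$ condition for the fundamental function $F_M$.

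For (iv) $\Longrightarrow$ (ii): if $1/w$ satisfies $\Delta_2$ with constant $C$, that is $w(t)\le Cw(2t)$, then the doubling condition \eqref{doubling condition} follows from the fact that $u\mapsto\varphi(c/u)u$ is decreasing for every $c>0$. Indeed, writing $Cc/w(t)=c/(w(t)/C)$ and $w(t)=C\cdot(w(t)/C)$, and using $w(t)/C\le w(2t)$,
\[
\varphi\!\left(\tfrac{Cc}{w(t)}\right)w(t)=C\,\varphi\!\left(\tfrac{c}{w(t)/C}\right)\tfrac{w(t)}{C}\ge C\,\varphi\!\left(\tfrac{c}{w(2t)}\right)w(2t)\ge \varphi\!\left(\tfrac{c}{w(2t)}\right)w(2t).
\]
By \cite[Proposition 4.5]{KR}, as recalled before the Proposition, \eqref{doubling condition} implies that $D_2$ is bounded on $M_{\varphi,w}$, and a final appeal to Lemma \ref{lem:quasi-norm} yields (ii).

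The proof is mostly a stitching of earlier results, and there is no serious obstacle. The conceptual point worth observing is that the hypothesis $\alpha_\varphi>1$ is confined entirely to the equivalence (iii) $\Longleftrightarrow$ (iv)—the two bridging implications (ii) $\Longrightarrow$ (iii) and (iv) $\Longrightarrow$ (ii) are valid for every Orlicz function.
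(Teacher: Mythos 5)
Your proof is correct and follows exactly the route the paper intends: the paper gives no explicit proof of this proposition, stating only that it ``sums up the preceding results,'' and your assembly of Lemma \ref{lem:quasi-norm}, the observation that $D_2\chi_{(0,t)}=\chi_{(0,2t)}$ forces $F_M$ to satisfy $\Delta_2$, the lemma relating $F_M$ and $1/w$ under $\alpha_\varphi>1$, and the sufficiency of the doubling condition \eqref{doubling condition} is precisely that intended stitching. Your explicit verification that $\Delta_2$ for $1/w$ yields \eqref{doubling condition} is a correct expansion of the paper's one-line justification via the monotonicity of $t\mapsto\varphi(c/t)t$, and your remark that $\alpha_\varphi>1$ is needed only for (iii) $\Longleftrightarrow$ (iv) is accurate.
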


\subsection{Concavity}

We draw now some consequences of section \ref{sec:New formulas} on the geometry of spaces $M_{\varphi,w}$.

\noindent We say that an Orlicz function $\varphi$ is $p$-concave for some $1<p<\infty$ if the map  $t\mapsto \varphi(t^{1/p})$ on $\mathbb{R}_+$ is concave. Similarly a modular $G: L^0\to \mathbb R_+$ is $p$-concave if the map $f\mapsto G(f^{1/p})$ is concave on $L_0^+(I)$.
\begin{proposition}\label{prop:concavity}

\noindent \rm{(1)} The modular $M$ is disjointly superadditive, that is $M(f+g)\ge M(f)+ M(g)$ whenever $f,g\in L^0$ are disjoint.

\noindent \rm{(2)} If  the Orlicz function $\varphi$ is $p$-concave for some $1<p<\infty$ then so is the modular $M$.
\end{proposition}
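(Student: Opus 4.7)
The plan is to deduce both parts from the infimum representation
$M(h)=\inf\{\int_I\varphi(|h|/v)\,v : v\sim w,\ v\ge 0\}$
established in Theorem \ref{integral:ineq}(i), together with the relaxed form
$M(h)=\inf\{\int_I\varphi(|h|/v)\,v : v^*\le w,\ v\ge 0\}$
from Corollary \ref{cor:2}. This is the only serious input needed; the rest is bookkeeping.

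For (1), fix disjoint $f,g\in L^0$ with supports $A:=\supp f$ and $B:=\supp g$, and take any admissible $v\sim w$, $v\ge 0$. I split it as $v=v_1+v_2$ with $v_1:=v\chi_A$ and $v_2:=v\chi_B$. Since $v_i\le v$ pointwise, we have $v_i^*\le v^*=w$ and $v_i\ge 0$, so Corollary \ref{cor:2} gives $M(f)\le\int_I\varphi(|f|/v_1)\,v_1=\int_A\varphi(|f|/v)\,v$, and similarly $M(g)\le\int_B\varphi(|g|/v)\,v$. Summing and using disjointness of $A,B$,
$M(f)+M(g)\le \int_A\varphi(|f|/v)\,v+\int_B\varphi(|g|/v)\,v=\int_I\varphi(|f+g|/v)\,v$,
and taking the infimum over admissible $v$ yields $M(f+g)\ge M(f)+M(g)$.

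For (2), set $\psi(t):=\varphi(t^{1/p})$, which is concave on $\mathbb{R}_+$ by hypothesis. Substituting $g=f^{1/p}$ in Theorem \ref{integral:ineq}(i) and using the identity $\varphi(f^{1/p}/v)\,v=\psi(f/v^p)\,v$ (valid where $v>0$ and, by convention, where $v=0$ too, since both expressions then equal $0$ or $+\infty$ according as $f=0$ or $f>0$), I obtain
$M(f^{1/p})=\inf\{H_v(f):v\sim w,\ v\ge 0\}$, where $H_v(f):=\int_I\psi(f/v^p)\,v$.
For each fixed $v$ the integrand $t\mapsto\psi(f(t)/v(t)^p)v(t)$ is concave in $f(t)$ pointwise: at points where $v(t)>0$ this is the concave function $\psi$ composed with a positive linear map and rescaled by the positive constant $v(t)$; at points where $v(t)=0$ the integrand equals $0$ when $f(t)=0$ and $+\infty$ when $f(t)>0$, which still satisfies the concavity inequality in the extended real sense (if $(1-\lambda)f_1(t)+\lambda f_2(t)>0$ at such a point, then the left-hand side is $+\infty$, and since some $f_i(t)>0$, the right-hand side is $+\infty$ as well). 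Integrating, each $H_v$ is concave on $L^0_+(I)$, and the pointwise infimum of a family of concave functions is concave. Hence $f\mapsto M(f^{1/p})$ is concave on $L^0_+(I)$, which is precisely $p$-concavity of $M$.

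The only subtlety I anticipate is the handling of the convention at null points of the auxiliary weight $v$ in part (2); this is a bookkeeping issue that is resolved by working in the extended-real concavity framework. The substantive content in both parts is supplied by the rearrangement-free infimum formulas of Section \ref{sec:New formulas}, which convert the nonlinear functional $M$ into an infimum of linear/concave functionals and thereby make superadditivity and $p$-concavity transparent.
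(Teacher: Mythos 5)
Your proof is correct and follows essentially the same route as the paper, which deduces both parts from the infimum representation $M(h)=\inf_{v\sim w,\,v\ge 0}\int_I\varphi(|h|/v)v$ of Theorem \ref{integral:ineq}: each functional $I_v$ is disjointly additive and $p$-concave, and these properties pass to the infimum. Your write-up merely fills in the details the paper leaves implicit (the detour through Corollary \ref{cor:2} with $v\chi_A$ in part (1) is harmless but unnecessary, since $v\sim w$ already gives $M(f)\le I_v(f)$ directly).
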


\begin{proof}
(1) results from the fact that by Theorem \ref{integral:ineq}  the modular $M$ is the infimum of the disjointly additive maps $\displaystyle I_v: f\mapsto \int_I\varphi\left(|f|\over v\right)v$, where $v\ge 0, v\sim w$.

 Similarly (2) follows from the fact that the modulars $I_v$ are clearly $p$-concave whenever $\varphi$ is $p$-concave, and an infimum of $p$-concave modulars is $p$-concave.
\end{proof}

\begin{corollary}\label{cor:316}
If the Orlicz function $\varphi$ is equivalent to a $p$-concave Orlicz function for some $1<p<\infty$, then the class $M_{\varphi,w}$ is $p$-concave, that is for some constant $c>0$, for all $f_1,\dots, f_n\in M_{\varphi,w}$ and every $n\in\mathbb N$,
\[   \left \|\left(\sum_{i=1}^n|f_i|^p\right)^{1/p}\right\|_M\ge c \left(\sum_{i=1}^n\|f_i\|_M^p\right)^{1/p}.\]
\end{corollary}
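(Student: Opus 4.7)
The plan is to derive the $p$-concavity of $\|\cdot\|_M$ from the $p$-concavity of the modular $M$ established in Proposition \ref{prop:concavity}(2). First I reduce to the case where $\varphi$ is itself $p$-concave: if $\varphi$ is merely equivalent (with constant $K$) to a $p$-concave Orlicz function $\tilde\varphi$, a routine comparison of Luxemburg functionals gives the two-sided estimate $K^{-1}\|\cdot\|_{M_{\varphi,w}}\le \|\cdot\|_{M_{\tilde\varphi,w}}\le K\|\cdot\|_{M_{\varphi,w}}$ on the common underlying class. Hence establishing the inequality with constant $1$ for $\tilde\varphi$ produces it for $\varphi$ with constant $c=K^{-2}$.

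Assume now $\varphi$ is $p$-concave, so by Proposition \ref{prop:concavity}(2) the functional $F(g):=M(g^{1/p})$ is concave on $L^0_+$. Fix $f_1,\dots,f_n\in M_{\varphi,w}$; without loss of generality $f_i\ge 0$ and $\mu_i:=\|f_i\|_M>0$ for every $i$ (the vanishing of $\mu_i$ forces $f_i=0$, since $\varphi(\infty)=\infty$ and $w>0$ near the origin). Set $f:=\bigl(\sum_{i=1}^n f_i^p\bigr)^{1/p}$ and $S:=\sum_{i=1}^n \mu_i^p$; the target $\|f\|_M\ge S^{1/p}$ will follow once I show $M(f/\lambda)>1$ for every $\lambda<S^{1/p}$. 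To this end fix such a $\lambda$, put $\alpha_i:=\mu_i^p/S$ (so $\sum\alpha_i=1$), and decompose
\[\Bigl(\frac f\lambda\Bigr)^p=\sum_{i=1}^n \Bigl(\frac{f_i}\lambda\Bigr)^p=\sum_{i=1}^n \alpha_i\cdot\frac{(f_i/\lambda)^p}{\alpha_i}.\]
Concavity of $F$ then yields
\[M(f/\lambda)= F\Bigl(\sum \alpha_i\,\frac{(f_i/\lambda)^p}{\alpha_i}\Bigr)\ge \sum \alpha_i\, F\Bigl(\frac{(f_i/\lambda)^p}{\alpha_i}\Bigr)=\sum \alpha_i\, M\Bigl(\frac{f_i}{\lambda\alpha_i^{1/p}}\Bigr).\]
Since $\alpha_i^{1/p}=\mu_i/S^{1/p}$, we have $\lambda\alpha_i^{1/p}=\lambda\mu_i/S^{1/p}<\mu_i$, so by the very definition of the Luxemburg functional each term satisfies $M(f_i/(\lambda\alpha_i^{1/p}))>1$. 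Hence $M(f/\lambda)>\sum \alpha_i=1$, and letting $\lambda\uparrow S^{1/p}$ gives $\|f\|_M\ge S^{1/p}=\bigl(\sum\|f_i\|_M^p\bigr)^{1/p}$.

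I expect the only delicate point to be that in general $M(f_i/\mu_i)$ may be strictly less than $1$, so a direct application of concavity at the endpoint $\lambda=S^{1/p}$ would only yield $M(f/S^{1/p})\le 1$, the wrong direction. The strict inequality $M(f_i/\epsilon)>1$ for $\epsilon<\mu_i$, which is immediate from the infimum characterization of $\|\cdot\|_M$, is precisely what makes the perturbation $\lambda<S^{1/p}$ close the argument.
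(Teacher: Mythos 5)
Your proof is correct and follows essentially the same route as the paper's: reduce to a genuinely $p$-concave $\varphi$, write $(\sum|f_i|^p)/\lambda^p$ as a convex combination with weights $\|f_i\|_M^p/\sum_j\|f_j\|_M^p$, apply the concavity of $h\mapsto M(h^{1/p})$ from Proposition \ref{prop:concavity}(2), and use that $M(f_i/\epsilon)>1$ for $\epsilon<\|f_i\|_M$. The paper normalizes $\sum\|f_i\|_M^p=1$ and dilates by $t>1$ instead of taking $\lambda<S^{1/p}$, but this is only a cosmetic difference; your closing remark about why the strict perturbation is needed is exactly the point the paper's ``$M(tg_i)>1$ for $t>1$'' is handling.
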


\begin{proof} Let $|g|^p=\sum_{i=1}^n|f_i|^p$. It is easy to check that if Orlicz functions are equivalent then the classes $M_{\varphi_1,w}$ and $M_{\varphi_2,w}$ coincide and $\|\cdot\|_{M_{\varphi_1,w}}$ and $\|\cdot\|_{M_{\varphi_2,w}}$ are equivalent.
Thus we may assume w.l.o.g. that $\varphi$ is $p$-concave, and that $\sum_{i=1}^n\|f_i\|_M^p=1$. Let $c_i=\|f_i\|_M$ and $g_i=c_i^{-1}f_i$. We have $|g|^p=\sum_{i=1}^n c_i^p|g_i|^p$ with $\sum_{i=1}^n c_i^p=1 $. Denote $G(f)=M(|f|^p)$ and notice that $M(tg_i) > 1$ for $t>1$. Then by concavity of $G$ we get for all $t>1$,
\[ M(tg)=G(t^p|g|^p)\ge \sum_{i=1}^n c_i^pG(t^p|g_i|^p)= \sum_{i=1}^n c_i^pM(tg_i)> \sum_{i=1}^n c_i^p=1,\]
and thus $\|g\|_M\ge 1$.
\end{proof}

\subsection{K\"othe duality}

By {\it K\"othe dual} of a subset $A$ of $L^0$ we mean the set of elements $f$ of $L^0$ such that $fg\in L_1$ for any $g\in A$, where as usual $L_1$ denotes the space of integrable functions on $I$ equipped with the norm $\|f\|_1 = \int_I|f|$. We denote the K\"othe dual  of $A$ by $A'$. Note that $A'$ is a solid linear subspace in $L_0(I)$. In this subsection we shall determine the K\"othe dual $M'_{\varphi,w}$ of the class $M_{\varphi,w}$. We do not assume this class to be quasi-normed nor even linear.

\begin{lemma}\label{rearrangement-in-M'}
Every element of $M'_{\varphi,w}$ has a decreasing rearrangement which belongs also to $M'_{\varphi,w}$.
\end{lemma}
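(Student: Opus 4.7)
The plan is to show that, for every $f\in M'_{\varphi,w}$ and every $g\in M_{\varphi,w}$, the product $f^*g$ is integrable. Since the class $M_{\varphi,w}$ depends only on the decreasing rearrangement, $g^*\in M_{\varphi,w}$ whenever $g\in M_{\varphi,w}$, and by the classical Hardy--Littlewood inequality $\int_I f^*|g|\le \int_I f^*g^*$. Hence the problem reduces to showing $\int_I f^*g<\infty$ for every nonnegative decreasing $g\in M_{\varphi,w}$.

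The key step is an attainability argument based on Ryff's theorem (Cor.~7.6, Ch.~2 of \cite{BS}). Assuming $\lim_{t\to|I|}f^*(t)=0$, Ryff produces a measure-preserving onto map $\tau:\supp f\to\supp f^*$ such that $|f|=f^*\circ\tau$. I would then define $h$ on $I$ by $h=g\circ\tau$ on $\supp f$ and $h=0$ elsewhere. Measure-preservation gives
\[
d_h(s)=|\{u\in\supp f^*:g(u)>s\}|\le d_g(s),
\]
so $h^*\le g^*=g$ pointwise. Since $M_{\varphi,w}$ is rearrangement invariant and the modular $M$ is monotone in the decreasing rearrangement, it follows that $h\in M_{\varphi,w}$. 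The change of variables formula then yields
\[
\int_I|f|\,h=\int_{\supp f}(f^*\circ\tau)(g\circ\tau)=\int_{\supp f^*}f^*g=\int_I f^*g,
\]
where the last equality uses $f^*=0$ off $\supp f^*$. Since $f\in M'_{\varphi,w}$ and $h\in M_{\varphi,w}$, the left-hand side is finite, whence $\int_I f^*g<\infty$, as required.

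The main obstacle is verifying the hypothesis $\lim_{t\to|I|}f^*(t)=0$ for $f\in M'_{\varphi,w}$ when $|I|=\infty$, which is needed to invoke Ryff's theorem. If this limit were some $c>0$, then $|f|\ge c$ on a set $E$ of infinite measure, and the task would reduce to producing a single $g\in M_{\varphi,w}$ with $\int_E g=\infty$ in order to contradict $f\in M'_{\varphi,w}$. A natural candidate is a $\sigma$-convex combination of normalized characteristic functions of disjoint subsets of $E$ with doubling measures, exploiting the $\sigma$-convexity of the positive decreasing cone in $M_{\varphi,w}$ recorded in the preliminaries to Lemma~3.2. Alternatively, one may truncate $f^*$ at successively smaller positive heights, whose sublevel sets all have finite measure, apply the Ryff-based construction above to each truncation, and conclude for $f^*$ itself by monotone convergence. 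Modulo this technical reduction, the remainder of the argument is essentially routine.
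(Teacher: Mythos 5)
Your main-case argument---transporting the decreasing test function $g$ through the measure-preserving map $\tau$ and using $h^*\le g^*$ to keep $h$ inside $M_{\varphi,w}$---is sound and is essentially the same device the paper uses. The genuine gap is exactly where you locate it, but neither of your proposed repairs works. First, the case $\lim_{t\to\infty}f^*(t)=c>0$ cannot be dismissed by contradiction: it is a \emph{bona fide} case. Take $I=(0,\infty)$, $\varphi(t)=t$ and $w\equiv 1$; then $M(f)=\int_I f^*=\|f\|_1$, so $M_{\varphi,w}=L_1$ and $M'_{\varphi,w}=L_\infty$, which contains the constant functions, for which $f^*\equiv c>0$. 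Here no $g\in M_{\varphi,w}$ with $\int_E g=\infty$ exists, so your first fix is unavailable, and yet the lemma still has to be proved for such $f$. Second, the truncation/monotone-convergence fix shows only that $\int_I f^*g$ is the increasing limit of the finite quantities $\int_I (f_\lambda)^*g$; it does not show that the limit is finite, which is the whole point. Since $f^*\ge c$ everywhere, finiteness of $\int_I f^*g^*$ forces $g^*\in L_1$, so something substantive remains to be proved in this case (for instance: transport $g^*$ onto the infinite-measure set $\{|f|>c/2\}$ by a measure-preserving bijection to deduce $\int_I g^*<\infty$ from $f\in M'_{\varphi,w}$). There is also the prior degenerate possibility that $f^*\equiv+\infty$ (i.e.\ $d_f(\lambda)=\infty$ for every $\lambda$); this one \emph{does} lead to a contradiction, but by a different mechanism, namely the paper's construction of $g=\sum_{n}2^{-n}F_M(n)^{-1}\chi_{A_n}$ with disjoint unit-measure sets $A_n\subset\{|f|>2^nF_M(n)\}$, which uses the $\sigma$-convexity of $\|\cdot\|_M$ on the decreasing cone; you would need to supply this step as well.

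For comparison, the paper's proof sidesteps the vanishing-at-infinity hypothesis of Ryff's theorem entirely: after first proving $f^*<\infty$ as above, it replaces $f$ by the countably-valued function $f_1=\sum_{n\in\mathbb Z}2^n\chi_{B_n}$ with $f_1\le f\le 2f_1$, for which the measure-preserving map from the relevant part of $\supp f_1$ onto $\supp f_1^*$ can be assembled level set by level set (matching sets of equal, possibly infinite, measure), with no assumption on $f^*(\infty)$; it then transports $g$ exactly as you do. If you wish to keep Ryff's theorem as the engine, you must split off and separately handle both the case $f^*\equiv\infty$ and the case $0<f^*(\infty)<\infty$ along the lines indicated above.
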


\begin{proof}
Letting $f\in M'_{\varphi,w}$,  we may assume w.l.o.g. that $f\ge 0$.

i)  We show first  that $f$ has a finite rearrangement. If it is not the case, then for all $\lambda>0$, $|\{t: f(t)>\lambda\}|=\infty$. Then we can find recursively a sequence $(A_n)_{n=1}^\infty$ of disjoint measurable sets of measure 1, such that for every  $n\in \mathbb{N}$ we have $A_n\subset \{t: f(t)>2^n F_M(n)\}$.  Then setting
\[g=\sum_{n=1}^\infty {2^{-n}\over F_M(n)}\chi_{A_n},\]
we get
\[g^*=\sum_{n=1}^\infty {2^{-n}\over F_M(n)}\chi_{[n-1,n]}\le \sum_{n=1}^\infty {2^{-n}\over F_M(n)}\chi_{[0,n]}:=h.\]
Clearly $M(h)\le \sum_{n= 1}^\infty 2^{-n}=1$, and so $M(g)=M(g^*)\le 1$. On the other hand
\[\int_I fg=\sum_{n=1}^\infty\int_{A_n}{2^{-n}\over F_M(n)}f(t)dt=\infty,\]
a contradiction.

ii)  Thus $f^*(t) < \infty$ for all $t\in I$. Set for every $n\in\mathbb Z$,
\[B_n=\{t\in I: 2^n\le f<2^{n+1}\}\  \hbox{ and } \ f_1= \sum_{n\in\mathbb Z} 2^n \chi_{B_n}.  \]
Then $f_1\le f\le 2f_1$, and it is sufficient to prove that $f_1^*\in M'_{\varphi,w}$. Since the set of values of $f_1$ has zero as a unique possible accumulation point, we have
\[
f_1^*= \sum_{n\in J} 2^n \chi_{I_n},
\]
where $J$ is an interval of $\mathbb Z$ and $I_n$ an interval of $I$ with $|I_n|=|B_n|$ for every $n\in J$.
The first element in $J$ is the supremum of those $n$ that $I_n$ has infinite measure, and the last element of $J$ is the supremum of all integers $n$ such that $I_n$ has positive finite measure. In particular if for each $n\in\mathbb{Z}$, $0<|B_n| < \infty$, then $\inf J = -\infty$ and $\sup J = \infty$, and then $f_1^* = \sum_{n=-\infty}^\infty 2^n \chi_{I_n}$.

Let $B=\bigcup\limits_{n\in J} B_n$ and $f_2=\chi_B f_1$. Then $f_1^*=f_2^*$ and there is a measure preserving transformation and onto $\sigma: B\to S$, where $S$ is the support of $f_1^*$,  such that $f_2=f_1^*\circ\sigma$. If $g\in M_{\varphi,w}$,  we have
\[\int f_1^*|g|=\int (f_1^*\circ \sigma)(|g|\circ\sigma)=\int f_2(|g|\circ\sigma).\]
This last integral is finite since $0\le f_2\le f\in M'_{\varphi,w}$ and $(g\circ\sigma)^*=(g\chi_S)^*\le g^*\in M_{\varphi,w}$. It follows that  $f_1^* \in M'_{\varphi,w}$ and the proof is completed.
\end{proof}

\begin{lemma}\label{M'-norm}
For every $f\in M'_{\varphi,w}$ it holds that $\sup\{\|fg\|_1: g\in M_{\varphi,w}, \|g\|_M\le 1\}<\infty$.
\end{lemma}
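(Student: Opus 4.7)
The plan is to compare the functional $g\mapsto\|fg\|_1$ against the associate norm of the auxiliary Banach function space $\mathcal L_{\varphi,w}$ introduced in the discussion preceding the proof of Lemma \ref{lem:quasi-norm}. The key observation is that although $M_{\varphi,w}$ need not be linear, its positive decreasing cone coincides with that of $\mathcal L_{\varphi,w}$ with matching homogeneous functionals, and the whole space $\mathcal L_{\varphi,w}$ sits inside $M_{\varphi,w}$. This will let me transfer to a genuine K\"othe dual setting.

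First, by Lemma \ref{rearrangement-in-M'} we have $f^*\in M'_{\varphi,w}$, and the classical Hardy-Littlewood inequality gives $\|fg\|_1=\int_I|f||g|\le \int_I f^*g^*$ for every $g\in L^0$. Hence it suffices to bound $\int_I f^*g^*$ uniformly over $g\in M_{\varphi,w}$ with $\|g\|_M\le 1$. Next, Theorem \ref{integral:ineq}(i) gives $M(h)\le\int_I\varphi(|h|/w)w$ for every $h\in L^0$, so $\|h\|_M\le\|h\|_{\mathcal L_{\varphi,w}}$ and therefore $\mathcal L_{\varphi,w}\subset M_{\varphi,w}$ continuously. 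This inclusion yields the reverse inclusion $M'_{\varphi,w}\subset\mathcal L'_{\varphi,w}$ on K\"othe duals.

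Since $\mathcal L_{\varphi,w}$ is a Banach function space with the Fatou property, $\mathcal L'_{\varphi,w}$ is also a Banach function space and the standard associate norm inequality
\[
\int_I |u||h|\le \|u\|_{\mathcal L'_{\varphi,w}}\|h\|_{\mathcal L_{\varphi,w}}
\]
holds for all $u\in\mathcal L'_{\varphi,w}$, $h\in\mathcal L_{\varphi,w}$. Applied with $u=f^*\in M'_{\varphi,w}\subset\mathcal L'_{\varphi,w}$ and $h=g^*$, this gives $\int_I f^*g^*\le \|f^*\|_{\mathcal L'_{\varphi,w}}\|g^*\|_{\mathcal L_{\varphi,w}}$. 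Now $g^*$ is positive and decreasing, so on this cone $\|g^*\|_{\mathcal L_{\varphi,w}}=\|g^*\|_M=\|g\|_M\le 1$, and altogether
\[
\|fg\|_1\le \int_I f^*g^*\le \|f^*\|_{\mathcal L'_{\varphi,w}}<\infty,
\]
which bounds the supremum uniformly by $\|f^*\|_{\mathcal L'_{\varphi,w}}$.

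The routine parts are the Hardy-Littlewood reduction to rearrangements and the standard associate-norm duality inequality for Banach function spaces. The main conceptual step, and the place where the previous work of the paper is actually used, is the dual pair of facts that (a) $\mathcal L_{\varphi,w}\subset M_{\varphi,w}$ (relying on the infimum formula from Theorem \ref{integral:ineq}) to transfer the K\"othe duality from $M_{\varphi,w}$ to the genuine Banach function space $\mathcal L_{\varphi,w}$, and (b) $\|\cdot\|_M$ equals $\|\cdot\|_{\mathcal L_{\varphi,w}}$ on the positive decreasing cone, in order to bring the constraint $\|g\|_M\le 1$ (via $g^*$) into the hypothesis of that duality.
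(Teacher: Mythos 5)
Your argument is correct and reaches the same conclusion, but it packages the decisive step differently from the paper. The paper argues directly by contradiction: after the same Hardy--Littlewood reduction to non-negative decreasing $f$ and $g$, it assumes the supremum is infinite, picks decreasing $g_n$ with $\|g_n\|_M\le 1$ and $\int_I fg_n\ge 2^n$, and uses the $\sigma$-convexity of $\|\cdot\|_M$ on the cone of non-negative decreasing functions to form $g=\sum_{n=1}^\infty 2^{-n}g_n$ with $\|g\|_M\le 1$ and $\int_I fg=\infty$, contradicting $f\in M'_{\varphi,w}$. You instead transfer the problem to the auxiliary Banach function space $\mathcal L_{\varphi,w}$ via the inclusion $\mathcal L_{\varphi,w}\subset M_{\varphi,w}$ (a correct consequence of Theorem \ref{integral:ineq}, taking $v=w$ in the infimum) and then invoke the standard fact that membership in the set-theoretic K\"othe dual of a Banach function space forces the associate norm to be finite. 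Be aware that this ``standard fact'' is precisely the nontrivial content of the lemma: the resonance/uniform-boundedness theorem for Banach function spaces is itself proved by the very same gliding-hump series argument (or by the closed graph theorem), so your route is a legitimate repackaging that buys brevity at the cost of an external citation rather than a genuinely cheaper proof. One point you should make explicit: $\mathcal L_{\varphi,w}$ need not satisfy all the textbook axioms of a Banach function space in the Bennett--Sharpley sense (for $I=(0,\infty)$ the indicator of an unbounded set of finite measure may fail to belong to it), so you should either check that the version of the resonance theorem you cite uses only completeness, the ideal property and the Fatou property --- which it does --- or simply run the $\sum 2^{-n}g_n$ argument directly in the decreasing cone, which is what the paper does.
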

\begin{proof}
By the ordinary Hardy-Littlewood inequality and the fact that $f^*$ belongs to $M'_{\varphi,w}$ too, we may w.l.o.g. suppose that $f$ is non-negative and decreasing and show that
\[
\sup\{\|fg\|_1: g\ge 0, \hbox{ decreasing, } \|g\|_M\le 1\}<\infty.
\]
If it is not the case, we may find a sequence $(g_n)$ of decreasing functions in $M_{\varphi,w}$, with $\|g_n\|_M\le 1$ and
$\int_I fg_n\ge 2^n$, $n\in\mathbb{N}$. Setting $g=\sum_{n=1}^\infty 2^{-n}g_n$ we have that $g\ge 0$, $g$ is decreasing and
\[ \|g\|_M\le \sum_{n=1}^\infty 2^{-n}\|g_n\|_M\le 1,\]
since $\|\ \|_M$ is convex on the cone of non-negative decreasing elements of $M_{\varphi,w}$. On the other hand
\[\int_I fg=\sum_{n=1}^\infty 2^{-n}\int_I fg_n\ge \sum_{n=1}^\infty 2^{-n}\times 2^n=\infty,\]
a contradiction.
\end{proof}

Given a weight function $w$, by  $L_\varphi(w)$ denote the Orlicz space associated to the Orlicz function $\varphi$ and the measure $wdt$, that is $L_\varphi(w) = \{f\in L^0: \exists \lambda > 0 \ \int_I \varphi(\lambda |f|) w < \infty\}$. It is equipped with the Luxemburg norm $\|f\|_{L_\varphi(w)} = \inf\{\epsilon > 0: \int_I \varphi(|f|/\epsilon) w \le 1\}$. Recall also that the Orlicz-Lorentz space $\Lambda_{\varphi,w}$ consists of all $f\in L^0$ such that $f^* \in L_\varphi(w)$.

\begin{definition}\label{def}
Let $\varphi$ be  $N$-function, and $\varphi_*$ be its complementary function. For  $f\in\Lambda_{\varphi_*,w}$ we denote by  $\|f\|^0_{\Lambda_{\varphi_*,w}}$ the Orlicz norm of $f^*$ in the weighted Orlicz space $L_{\varphi_*}(w)$, that is
\[ \|f\|^0_{\Lambda_{\varphi_*,w}}= \|f^*\|^0_{L_{\varphi_*} (w)} = \sup\left\{ \int_I f^*gw: g\in L_{\varphi}(w), \|g\|_{L_{\varphi}(w)}\le 1\right\}. \]
We call $\|f\|^0_{\Lambda_{\varphi_*,w}}$ the {\it Orlicz norm} of $f$ in $\Lambda_{\varphi_*,w}$ and by $\Lambda^0_{\varphi_*,w}$ denote the space
$\Lambda_{\varphi_*,w}$ equipped with the Orlicz norm. Recall that the Orlicz norm in $L_{\varphi_*}(w)$ can be equivalently expressed by the Amemiya formula (e.g. \cite[p. 92, Theorem 10.2]{KrRut} or \cite[p. 7]{M})  which gives here
\[
 \|f\|^0_{\Lambda_{\varphi_*,w}}= \|f^*\|^0_{L_{\varphi_*} (w)} = \inf_{k>0} \frac{1}{k} \left(1 + \int_I \varphi_* (kf^*) w\right).
 \]
\end{definition}
For more information on Orlicz spaces we refer to \cite{BS, KPS, KrRut, LT2} and for Orlicz-Lorentz spaces to \cite{KR} and references there.
\begin{theorem}\label{th:dual}
Let $\varphi$ be N-function, and $\varphi_*$ be its complementary function. Then $M_{\varphi,w}' = \Lambda_{\varphi_*,w}^0$ that is the K\"othe dual of the class $M_{\varphi,w}$ is the classical Orlicz-Lorentz space $\Lambda_{\varphi_*,w}$, and the Orlicz norm on  $\Lambda_{\varphi_*,w}$ is dual to the homogeneous functional $\|\cdot\|_M$ in the sense that $ \|f\|^0_{\Lambda_{\varphi_*,w}} = \sup\left\{ \int_I fg: g\in M_{\varphi,w}, \|g\|_M\le 1\right\}$, for all $f\in  \Lambda_{\varphi_*,w}$.

\end{theorem}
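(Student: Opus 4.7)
The identification $M_{\varphi,w}' = \Lambda_{\varphi_*,w}^0$ together with the norm formula reduces, via Lemma \ref{M'-norm}, to proving the pointwise identity
\[
N(f) := \sup\left\{\int_I fg : g \in M_{\varphi,w},\ \|g\|_M \le 1\right\} = \|f\|^0_{\Lambda_{\varphi_*,w}}
\]
for every $f \in L^0$. Indeed, Lemma \ref{M'-norm} gives $f \in M'_{\varphi,w} \Rightarrow N(|f|) < \infty$, while the sign-trick $g \mapsto |g|\sign(f)$ (permissible by solidity of $\|\cdot\|_M$) combined with a scaling argument yields $N(f) < \infty \Rightarrow fg \in L_1$ for all $g \in M_{\varphi,w}$. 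Since both $N(f)$ and $\|f\|^0_{\Lambda_{\varphi_*,w}}$ depend only on $f^*$ (the right-hand side by definition; the left-hand side via Hardy--Littlewood, the rearrangement invariance of $\|\cdot\|_M$, and Ryff's theorem to realize comonotone pairs), I reduce to the case $f = f^* \ge 0$.

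For the upper bound $N(f) \le \|f\|^0_{\Lambda_{\varphi_*,w}}$: monotone convergence applied in the definition of the Luxemburg-type functional $\|\cdot\|_M$ shows that $\|g\|_M \le 1$ is equivalent to $M(g) = \int_I \varphi(g^*/w)\,w \le 1$; equivalently, $h := g^*/w$ has Luxemburg norm at most $1$ in the weighted Orlicz space $L_\varphi(w)$. The classical Hardy--Littlewood inequality together with the standard duality between Luxemburg and Orlicz norms in $L_\varphi(w)$ vs.\ $L_{\varphi_*}(w)$ then yields
\[
\int_I fg \,\le\, \int_I f^*g^* \,=\, \int_I f\,h\,w \,\le\, \|f\|^0_{L_{\varphi_*}(w)} \,=\, \|f\|^0_{\Lambda_{\varphi_*,w}}.
\]

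For the lower bound $\|f\|^0_{\Lambda_{\varphi_*,w}} \le N(f)$, which is the decisive step, I exploit the new formula of Section \ref{sec:New formulas}. Given any $h \ge 0$ with $\int_I \varphi(h)\,w \le 1$, set $g := hw$. Applying Theorem \ref{integral:ineq}(i) with the particular admissible weight $v := w$ (trivially $w \sim w$) gives
\[
M(g) \,=\, \inf_{v\sim w,\,v\ge 0}\int_I \varphi\!\left(\frac{|g|}{v}\right) v \,\le\, \int_I \varphi\!\left(\frac{|g|}{w}\right) w \,=\, \int_I \varphi(h)\,w \,\le\, 1,
\]
so $\|g\|_M \le 1$. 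Since $f = f^*$, we have $\int_I fg = \int_I f\,h\,w$, hence $\int_I f\,h\,w \le N(f)$. Passing to the supremum over admissible $h$ and invoking the sup-definition of the Orlicz norm gives $\|f\|^0_{\Lambda_{\varphi_*,w}} = \sup\{\int_I f\,h\,w : \int_I \varphi(h)\,w \le 1\} \le N(f)$.

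The main obstacle is precisely this last step. The inequality $M(hw) \le \int_I \varphi(h)\,w$ is a priori far from evident, because $hw$ need not be decreasing, so that $M(hw)$ is genuinely computed through the rearrangement $(hw)^*$ rather than through $hw$ itself. The ``generalized Hardy--Littlewood'' formula of Theorem \ref{integral:ineq} is exactly what bridges this gap: it permits to test the modular $M$ against any non-negative weight equimeasurable with $w$, in particular against $w$ itself, and this freedom is the whole content of the lower-bound direction of the duality.
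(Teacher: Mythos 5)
Your proposal is correct, and the decisive step is handled by a genuinely different device than in the paper. The upper bound (H\"older via Hardy--Littlewood and the duality $(L_{\varphi_*}(w))'=L_\varphi(w)$) coincides with part (i) of the paper's proof. For the lower bound, the paper must produce from a near-optimal $h\in L_\varphi(w)$ a \emph{decreasing} competitor: it transports $h$ by the change of variable $u=W(t)$ to obtain a decreasing $h_1$ equimeasurable with $h$ for the measure $w\,dt$, so that $g=h_1w$ is decreasing and $M(g)=\int_I\varphi(h_1)w\le 1$ can be computed without rearranging; it then passes from decreasing $f$ to general $f$ via the Fatou property and measure-preserving automorphisms on simple functions. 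You instead take an \emph{arbitrary} $h\ge 0$ in the modular unit ball, set $g=hw$, and invoke Theorem \ref{integral:ineq}(i) (with the competitor $v=w$) to get $M(hw)\le\int_I\varphi(h)w\le 1$; this bypasses the symmetrization of $h$ entirely and is exactly the kind of application the generalized Hardy--Littlewood formula of Section \ref{sec:New formulas} is designed for --- arguably cleaner, at the price of resting the whole duality on the nontrivial Theorem \ref{integral:ineq} rather than on an elementary change of variables. Two points you treat implicitly should be spelled out: the reduction to $f=f^*$ on the supremum side needs the attainment form $\int_I f^*g^*=\sup_{\tilde g\sim g}\int_I|f||\tilde g|$ (quoted in the paper's introduction) together with the rearrangement invariance of $\|\cdot\|_M$, which is legitimate but replaces the paper's Fatou-plus-simple-functions step in part (ii); and the set identification in the degenerate case $W\equiv\infty$ (where $\Lambda_{\varphi_*,w}=\{0\}$ and one must show $M'_{\varphi,w}=\{0\}$) is absorbed in your framework into the classical fact that $\sup\{\int_I f^*hw:\int_I\varphi(h)w\le 1\}=\infty$ whenever $f^*\notin L_{\varphi_*}(w)$, for which the paper gives an explicit test-function computation. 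Neither point is a genuine gap.
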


\begin{remark}
When $W(t)=\int_0^tw(s)\,ds=\infty$ for $t>0$ the theorem states simply that $M'_{\varphi,w}=\{0\}$.
\end{remark}
\begin{proof}
i) We prove first that $\Lambda_{\varphi_*,w}\subset M'_{\varphi, w}$, and more precisely
\begin{equation}
f\in \Lambda_{\varphi_*,w}, g\in M_{\varphi, w} \implies fg\in L_1\hbox{ and } \|fg\|_1\le \|f\|^0_{\Lambda_{\varphi_*,w}} \|g\|_M.
\end{equation}
We have  $f^*\in L_{\varphi_*}(w)$  while $g^*/w\in L_{\varphi}(w)$. Using the Hardy-Littlewood inequality and the K\"othe duality of $(L_{\varphi_*}(w))'=L_{\varphi}(w)$ with respect to the new measure $wdt$, we have
\[ \int_I |fg| \le \int_I f^*g^*= \int_I f^*(g^*/w)w\le \|f^*\|^0_{L_{\varphi_*}(w)}\|g^*/w\|_{L_\varphi(w)}= \|f\|^0_{\Lambda_{\varphi_*,w}} \|g\|_M.  \]

ii) We prove now that the ``unit ball'' of $M_{\varphi,w}$ is 1-norming for $\Lambda_{\varphi_*,w}$ and its Orlicz norm, that is for all $f\in \Lambda_{\varphi_*,w}$,
\begin{equation}\label{1-norming}
 \|f\|^0_{\Lambda_{\varphi_*,w}} = \sup\left\{ \int_I fg: g\in M_{\varphi,w}: \|g\|_M\le 1\right\}.
\end{equation}
\par We prove the statement (\ref{1-norming}) first when $f$ is non-negative and decreasing. Since then $\|f\|^0_{\Lambda_{\varphi_*,w}}=\|f\|^0_{L_{\varphi_*}(w)}$, we can find for every $\eps>0$,  a non-negative $h\in L_\varphi(w)$ with the Luxemburg norm $\|h\|_{L_\varphi(w)}\le 1$ such that
\[
\int_I fhw\ge (1-\eps)\|f\|^0_{\Lambda_{\varphi_*,w}}.
\]
Let us show that $h$ may be chosen to be decreasing.  Note first that this does not result directly from the Hardy-Littlewood inequality because $f^*$ is perhaps not equimeasurable with $f$ for the measure $wdt$. We may assume that $W(t)<\infty$ for all $t>0$, since otherwise  $\Lambda_{\varphi_*,w}=\{0\}$ and the statement is trivial. Then with $J=W(I)=(0,b)$ for some $b\in (0,\infty]$,
\[\int_I fhw\,dt=\int_J (f\circ W^{-1})(h\circ W^{-1}) du\le \int_J (f\circ W^{-1})(h\circ W^{-1})^* du = \int_I f h_1 w du,\]
where $h_1= (h\circ W^{-1})^*\circ W$ is decreasing and equimeasurable with $h$ for the measure $wdt$. In particular
$\|h_1\|_{L_\varphi(w)}=\|h\|_{L_\varphi(w)}\le 1$. We put now $g=h_1w$. Then $g$  is decreasing and we have
\[\int_I fg\ge (1-\eps)\|f\|^0_{\Lambda_{\varphi_*,w}},\]
\[ \int_I \varphi\left(\frac gw\right)w=\int_I\varphi(h)w\,dt\le 1,\]
which implies that $\|g\|_M\le 1$. It shows (\ref{1-norming}) in the case when $f$ is non-negative and decreasing.

\par We deduce now the statement (\ref{1-norming}) for general $f$. Since $\Lambda_{\varphi_*,w}$ with Orlicz norm has the Fatou property it is sufficient to prove the statement when $f$ is a simple function with support of finite measure. In this case there is an invertible measure preserving automorphism of $I$ such that $|f|=f^*\circ \sigma$. Then for any $g\in M_{\varphi,w}$ we have
\[
\int_I f(\sign f) (g\circ \sigma) = \int_I |f|(g\circ \sigma) = \int_I (f^*\circ\sigma)( g\circ \sigma)=\int_I f^*g \hbox{ and } \|( \sign f )g\circ \sigma\|_M=\|g\|_M.
 \]
Therefore the statement results from the decreasing case.

iii) We show finally that $M'_{\varphi, w}\subset \Lambda_{\varphi_*,w}$.

Consider first the case where $W(t)<\infty$, for all $t\in I$. In this case $\Lambda_{\varphi_*,w}$ is not trivial and contains the bounded functions with support of finite measure.
\noindent Let $f\in M'_{\varphi, w}$. Consider any bounded function $h$ with support of finite measure such that $|h|\le |f|$. We have $h\in \Lambda_{\varphi_*,w}$ and by (\ref{1-norming}) and Lemma \ref{M'-norm} we get
\begin{align*}
\|h\|^0_{\Lambda_{\varphi_*,w}}&=\sup\left\{ \int_I |hg|: g\in M_{\varphi,w}: \|g\|_M\le 1\right\}\\
&\le \sup\left\{ \int_I |fg|: g\in M_{\varphi,w}: \|g\|_M\le 1\right\} := C(f)<\infty.
\end{align*}
Since $|f|=\sup \{h\in L^0: 0\le h\le |f|, h  \hbox{ bounded with finite measure support}\}$,  and $\Lambda_{\varphi_*,w}$ has the Fatou property it results that $f\in \Lambda_{\varphi_*,w} $.

Consider now the case where $W(t)=\infty$, $t\in I$. In this case $\Lambda_{\varphi_*,w}=\{0\}$ and we have to prove that $M'_{\varphi,w}$ is trivial too. It is sufficient to prove that this space contains no indicator function, and by Lemma
\ref{rearrangement-in-M'} that it does not contain $\chi_{[0,b]}$, $0<b\in I$. In order to do this we test this function on the functions
$f_u= (w\wedge w(u))\chi_{[0,b]}$, $u \in (0,b]$. For $c\ge 0$ we have
\begin{align*}
\int_0^b \varphi(cf_u/w)w& =\int_0^u \varphi(cw(u)/w(t))w(t)\,dt + \varphi(c)\int_u^b w(t)\,dt \\
&\le \varphi(c)uw(u)+\varphi(c)\int_u^b w(t)\,dt = \varphi(c)\int_0^b w(t)\wedge w(u)\,dt.
\end{align*}
Thus choosing  $c_u=\varphi^{-1}\left({1\over \int_0^b w(t)\wedge w(u)\,dt}\right)$ we have $\|c_uf_u\|_M\le 1$.  Note that $c_u\to 0$ when $u\to 0$. Then
\[\int_I \chi_{[0,b]} c_uf_u= c_u\int_0^b w(t)\wedge w(u)\,dt = {c_u\over \varphi(c_u)}\to \infty \hbox{ when } u\to 0,\]
since $\varphi$ is a $N$-function. This implies $\chi_{[0,b]}\not\in M'_{\varphi,w}$ by Lemma \ref{M'-norm} and completes the proof.
\end{proof}

\begin{example}
Here is an example of a decreasing function $w$ such that $W(t) < \infty$, $t\in I$ but $1/w$ does not verify the condition $\Delta_2$.  Consequently if $\alpha_\varphi>1$ the class $M_{\varphi,w}$ is not linear but its K\"othe dual is non trivial. Let $I=(0,1]$ and $w$ be defined by $w(t)=2^{k^2}$ when $t\in (4^{-(k+1)^2},4^{-k^2}]$, $k=0,1,\dots$. Then $w(t)\le t^{-1/2}$ for all $t\in I$ and so $W$ is finite on $I$ which implies that $M_{\varphi,w}'= \Lambda_{\varphi_*,w} \ne \{0\}$. Moreover $w(2t_k)= 2^{1-2k}w(t_k)$ for $t_k= 4^{-k^2}$, $k=1,2,\dots$, which implies that $1/w$ does not verify $\Delta_2$ condition.
\end{example}

\section{A new class of Orlicz-Lorentz Spaces}\label{sec:Spaces P}

It is well know that $\|\cdot\|_M$ (resp.,  $\|\cdot\|_m$) is a quasi-norm if the weight $w$ is regular. Here we will show more.  If $w$ is regular then we will define explicitly a norm on $M_{\varphi,w}$ equivalent to $\|\cdot\|_M$ in function case or a norm on $m_{\varphi,w}$ equivalent to  $\|\cdot\|_m$ in sequence case. In fact we will define a new class of r.i. Banach spaces induced by an Orlicz function $\varphi$ and a weight $w$. It will turn out that these spaces are in fact the K\"othe biduals of the classes $M_{\varphi,w}$.

\subsection{Definition and properties}

The formulas in Corollary \ref{cor:2} suggest the following definition.

\begin{definition} Let $\varphi$ be an Orlicz function and $w$ be a positive weight sequence or a weight function.  Define the following functionals
\begin{align*}
P(f) &= \inf\left\{\int_I \varphi\left(\frac{|f|}{v}\right) v:  v\prec w, v\ge 0\right\},\ \ \ \ f\in L^0,\\
p(x) &= \inf\left\{\sum_{n=1}^\infty \varphi\left(\frac{|x(n)|}{v(n)}\right) v(n):  v\prec w, v\ge 0\right\},\ \ \ \ x = \{x(n)\}.
\end{align*}
They correspond to function space $\mathcal{M}_{\varphi,w}$ and sequence space $\mathfrak{m}_{\varphi,w}$, defined respectively as the set of $f\in L^0$ and $x=\{x(n)\}$ such that
\[
\|f\|_\mathcal{M} = \inf\left\{\epsilon > 0: P
\left(\frac{f}{\epsilon}\right) \le 1\right\} < \infty \ \ \ \text{and}\ \ \
\|x\|_\mathfrak{m} = \inf\left\{\epsilon > 0: p\left(\frac{x}{\epsilon}\right) \le 1\right\} < \infty.
\]
\end{definition}

\begin{remark}\label{embeddings} By Corollary \ref{cor:2} it is clear that $P(f) \le M(f)$ and thus $M_{\varphi,w} \subset \mathcal{M}_{\varphi,w}$.  Now, let $h\ge 0$ be a function such that $h\prec w$. Then $t h^*(t) \le \int_0^t h^* = W(t)$, and so $h^*(t)\le w_1(t):=  W(t)/t$. Note that $w_1$ is also a decreasing weight in $I$ such that $w_1 \ge w$. Hence by Corollary \ref{cor:2},
\[
\int_I \varphi\left(\frac{|f|}{h}\right) h
\ge  M_1( f):= \int_I \varphi\left(\frac{f^*}{w_1}\right)w_1 = \inf\left\{\int_I \varphi\left(\frac{|f|}{v}\right)v: v^* \le w_1, v\ge 0\right\}.
\]
Thus  passing to infimum when $h\prec w$ we get $P(f) \ge M_1(f)$, and consequently we have
\begin{equation}\label{inclusions}
M_{\varphi,w}\subset \mathcal{M}_{\varphi,w}\subset M_{\varphi,w_1}.
\end{equation}
Analogously $m_{\varphi,w} \subset \mathfrak{m}_{\varphi,w}\subset m_{\varphi,w_1}$. Moreover
\begin{equation*}
\|f\|_M\ge \|f\|_\mathcal{M}\ge \|f\|_{M_1},
\end{equation*}
and in particular the functional $\|f\|_\mathcal{M}$ is faithful that is $\|f\|_\mathcal{M}=0$ implies $f=0$ as an element of $L^0$.

\begin{proposition}\label{prop:equiv}
If the weight $w$ is regular then the classes $\mathcal{M}_{\varphi,w}$ and $M_{\varphi,w}$ coincide and the associated functionals $\|\cdot\|_\mathcal{M}$ and $\|\cdot\|_M$ are equivalent.
\end{proposition}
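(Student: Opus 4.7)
The plan is to exploit the chain of embeddings already recorded above the proposition, namely $M_{\varphi,w}\subset \mathcal{M}_{\varphi,w}\subset M_{\varphi,w_1}$ together with the sandwich $\|f\|_M\ge \|f\|_\mathcal{M}\ge \|f\|_{M_1}$, and to show that under regularity of $w$ the outer classes $M_{\varphi,w}$ and $M_{\varphi,w_1}$ coincide with equivalent gauges; the stated equivalence will then follow by squeezing.

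First I would observe that regularity forces $w$ and $w_1$ to be pointwise equivalent. Since $w$ is decreasing one always has $W(t)\ge tw(t)$, so $w_1(t)=W(t)/t\ge w(t)$ on $I$, while regularity gives the reverse bound $w_1(t)\le Cw(t)$. Thus $w\le w_1\le Cw$, and in particular $C\ge 1$.

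Next I would invoke the elementary monotonicity already used throughout the paper: for any fixed $a>0$, the map $t\mapsto \varphi(a/t)\,t$ is decreasing on $(0,\infty)$. Applied with $a=f^*(s)$ and with the inequality $w\le w_1$, this yields the pointwise bound
\[
\varphi\!\left(\frac{f^*}{w_1}\right)w_1 \le \varphi\!\left(\frac{f^*}{w}\right)w,
\]
hence $M_1(f)\le M(f)$ (the direction already known). For the reverse, the same monotonicity applied to $w_1/C\le w$ gives
\[
\varphi\!\left(\frac{f^*}{w}\right)w \le \varphi\!\left(\frac{f^*}{w_1/C}\right)\frac{w_1}{C} = \frac{1}{C}\varphi\!\left(\frac{Cf^*}{w_1}\right)w_1,
\]
so after integration $M(f)\le (1/C)\,M_1(Cf)$. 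A routine translation to Luxemburg gauges (if $\|f\|_{M_1}\le \epsilon$ then $M_1(f/\epsilon)\le 1$, whence $M(f/(C\epsilon))\le (1/C)M_1(f/\epsilon)\le 1$) then produces $\|f\|_M\le C\|f\|_{M_1}$.

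Combined with the known inequality $\|f\|_M\ge \|f\|_\mathcal{M}\ge \|f\|_{M_1}$, this closes the sandwich and delivers the coincidence of $\mathcal{M}_{\varphi,w}$ and $M_{\varphi,w}$ with equivalent functionals. The only substantive step is the pointwise equivalence $w\le w_1\le Cw$ extracted from regularity; once this is in hand, the decreasing character of $t\mapsto \varphi(a/t)t$ propagates it automatically to the modulars $M$ and $M_1$ and to the associated Luxemburg gauges, so no genuine analytic obstacle remains.
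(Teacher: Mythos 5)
Your proposal is correct and follows essentially the same route as the paper: both exploit the sandwich $M_{\varphi,w}\subset \mathcal{M}_{\varphi,w}\subset M_{\varphi,w_1}$, note that regularity makes $w$ and $w_1$ pointwise equivalent, and transfer that equivalence to the modulars via the decreasing map $t\mapsto \varphi(a/t)t$ (your inequality $M(f)\le C^{-1}M_1(Cf)$ is exactly the paper's $M_1(f)\ge CM(C^{-1}f)$). You merely spell out the translation to the Luxemburg gauges in more detail than the paper does.
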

\begin{proof}
If the weight $w$ is regular, the weights $w$ and $w_1$ are equivalent.  Hence $M_{\varphi,w}=M_{\varphi, w_1}$ and thus by (\ref{inclusions}), $\mathcal{M}_{\varphi,w}=M_{\varphi,w}$ (analogously $m_{\varphi,w} = \mathfrak{m}_{\varphi,w}$ when the weight $w$ is regular). More precisely if $w_1\le C w$, $C\ge 1$,  then $M_1(f)\ge CM(C^{-1}f)\ge M(C^{-1}f)$, and thus $\|f\|_{M_1}\ge C^{-1}\|f\|_M$.
\end{proof}
\noindent  A partial converse to Proposition \ref{prop:equiv} will be given later in Proposition \ref{prop:equality P=M}.
\end{remark}

We show now that $P(f)$ and $p(x)$ are rearrangement invariant.

\begin{proposition}\label{prop:ri}
 For any function $f\in L^0$ and a sequence $x = \{x(n)\}$ we have $P(f) = P(f^*)$ and $p(x) = p(x^*)$.

\end{proposition}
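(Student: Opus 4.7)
I will prove the two inequalities $P(f^*)\le P(f)$ and $P(f)\le P(f^*)$ separately; the sequence identity $p(x)=p(x^*)$ follows by the same scheme.

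For the first inequality, given any admissible $v\ge 0$ with $v\prec w$, its decreasing rearrangement $v^*$ is itself a decreasing weight with $v^*\prec w$, so $v^*$ is admissible in the infimum defining $P(f^*)$. Applying Corollary \ref{cor:2} with $v^*$ in place of $w$ yields
\[\int_I\varphi\left(\frac{f^*}{v^*}\right)v^* \;=\;\inf\left\{\int_I\varphi\left(\frac{|f|}{u}\right)u : u^*\le v^*,\,u\ge 0\right\}\;\le\; \int_I\varphi\left(\frac{|f|}{v}\right)v,\]
where the inequality holds because $v$ itself satisfies $v^*\le v^*$. Thus $P(f^*)\le\int_I\varphi(|f|/v)\,v$, and taking the infimum over $v$ delivers $P(f^*)\le P(f)$.

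For the reverse inequality, given $v\ge 0$ with $v\prec w$, Ryff's theorem supplies a measure preserving surjection $\tau\colon\supp f\to\supp f^*$ satisfying $|f|=f^*\circ\tau$. Define $v'=v\circ\tau$ on $\supp f$ and $v'=0$ elsewhere. Then $v'$ is equidistributed with $v\chi_{\supp f^*}$, so $(v')^*\le v^*$, and hence $v'\prec w$. Using the convention $\varphi(0/0)\cdot 0=0$ on $(\supp f)^c$ together with the measure-preserving property of $\tau$, one obtains
\[\int_I\varphi\left(\frac{|f|}{v'}\right)v' \;=\; \int_{\supp f^*}\varphi\left(\frac{f^*}{v}\right)v \;\le\; \int_I\varphi\left(\frac{f^*}{v}\right)v,\]
and taking the infimum over $v$ yields $P(f)\le P(f^*)$.

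The principal obstacle is ensuring the applicability of Ryff's theorem, which requires $f^*(t)\to 0$ at the right endpoint of $I$ (or $|\supp f|<\infty$). In the contrary case, which can only occur when $|I|=|\supp f|=\infty$ and $\lim_{t\to\infty}f^*(t)>0$, every admissible $v$ satisfies $v^*(t)\le W(t)/t$ on finite subintervals, from which one checks directly that $P(f^*)=\infty$, so the inequality $P(f)\le P(f^*)$ is vacuous. In the discrete case one replaces measure preserving maps by automorphisms of $\mathbb N$, uses a bijection $\sigma$ realizing $|x|=x^*\circ\sigma$ on $\supp x$, and invokes the sequence form of Corollary \ref{cor:2}.
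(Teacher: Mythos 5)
Your proof is correct, and for the inequality $P(f^*)\le P(f)$ it takes a genuinely different route from the paper. The paper proves this direction by hand: first for countable-valued $f$, by constructing a measure preserving surjection $\tau$ of $I$ onto the set where $|f|$ exceeds $\lim_{t\to\infty}f^*(t)$ (split into the cases $|A|=\infty$ and $|A|<\infty$) so that $|f|\circ\tau=f^*$ and $v\circ\tau\prec w$, and then for general $f$ by approximating $|f|$ from above by countable-valued functions $g_n\downarrow |f|$ and passing to the limit via dominated convergence. Your observation that this direction follows in a few lines from Corollary \ref{cor:2} applied with the decreasing weight $v^*$ in place of $w$ --- since $v$ is itself a competitor in the infimum over $\{u\ge 0: u^*\le v^*\}$, while $v^*\prec w$ makes $v^*$ admissible for $P(f^*)$ --- is cleaner: it eliminates both the countable-range reduction and the limiting argument by reusing the Section \ref{sec:New formulas} machinery already packaged in Corollary \ref{cor:2} (the paper itself invokes that corollary for the modified weight $W(t)/t$ in Remark \ref{embeddings}, so applying it to $v^*$ is in keeping with its stated generality). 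Your proof of the reverse inequality $P(f)\le P(f^*)$ is essentially the paper's first step: Ryff's theorem, the transported weight $v\circ\tau$ extended by zero, and $(v\circ\tau)^*\le v^*$; and your disposal of the case $\lim_{t\to\infty}f^*(t)>0$, where $P(f^*)=\infty$, mirrors the argument the paper uses to justify that Ryff's theorem is applicable. The only detail worth writing out fully is the claim that $P(f^*)=\infty$ in that degenerate case, which goes exactly as in the paper: for $v\prec w$ either $|\{v\le b\}|=\infty$ for some $b>0$, forcing the integral to diverge, or else $v^*\equiv\infty$, contradicting $v^*(t)\le W(t)/t$.
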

\begin{proof} We provide the proof only in the case when $I=(0,\infty)$. Let $f\in L^0$.

First we show that $P(f^*) \ge P(f)$. Let $v\prec w$ and assume $\int_I \varphi\left(\frac{f^*}{v}\right) v < \infty$. It follows that $\lim_{t\to\infty}f^*(t) = 0$. Indeed, if we assume that $\inf_{t\ge 0} f^*(t) = K>0$ then setting $B= \{t: v(t) \le b\}$, $b> 0$, we have
$|B| \varphi\left(\frac{K}{b}\right) b \le \int_B \varphi\left(\frac{K}{v}\right) v
\le \int_I \varphi\left(\frac{f^*}{v}\right) v <\infty.$ Hence $|B| < \infty$ for any $b\ge 0$, and so $|\{t: v(t) > b\}| = \infty$ for every $b\ge 0$. Consequently $v^* =\infty$ on $(0,\infty)$, which contradicts the assumption $v\prec w$.

Thus by Ryff's theorem there exists an onto and measure preserving transformation $\tau: \supp f \to \supp f^*$ such that $|f| = f^* \circ \tau$ on $\supp f$. If $|\supp f| < \infty$ then $|\supp f| = |\supp f^*|$ and we extend $\tau$ to a measure preserving transformation from $I$ onto $I$. If $|\supp f|= \infty$ then $\supp f^* = I$. In the first case define $v_1 =v\circ \tau$, in the second case let $v_1(t) = v \circ \tau(t)$ for $t\in\supp f$ and $v_1(t) = 0$ for $t\in (\supp f)^c$. Since the range of $\tau$ in both cases is $(0,\infty)$, so $v_1\sim v$. Moreover in the first case,
\[
\int_I \varphi\left(\frac{f^*}{v}\right)v = \int_I \varphi\left(\frac{f^*\circ \tau}{v\circ\tau}\right) v\circ\tau =\int_I \varphi\left(\frac{|f|}{v_1}\right)v_1,
\]
and in the second case applying our convention we also get
\[
\int_I \varphi\left(\frac{f^*}{v}\right)v = \int_{\supp f} \varphi\left(\frac{|f|}{v\circ\tau}\right) v\circ\tau =\int_I \varphi\left(\frac{|f|}{v_1}\right)v_1.
\]
Summarizing, for every $v\prec w$ we found $v_1\prec w$ such that
$\int_I \varphi\left(\frac{f^*}{v}\right)v = \int_I \varphi\left(\frac{|f|}{v_1}\right)v_1$, which shows that $P(f^*) \ge P(f)$.

Now we will show that $P(f) \ge P(f^*)$.
Let's consider first $f\in L^0$ such that its range is countable.
Let $v\prec w$ and $\int_I \varphi\left(\frac{|f|}{v}\right)v < \infty$. Let $A = \{t: |f(t)| > K\}$ where $K=\lim_{t\to\infty}f^*(t)$.

Suppose $|A| = \infty$. Then since $f$ has countable range there exists an onto and measure preserving transformation $\tau: (0,\infty) \to A$ such that $|f|\circ \tau = f^*$ on $(0,\infty)$. Therefore
\begin{equation}\label{eq:31}
\int_I \varphi\left(\frac{|f|}{v}\right)v \ge \int_A \varphi\left(\frac{|f|}{v}\right)v =
\int_I \varphi\left(\frac{|f|\circ\tau}{v\circ\tau}\right)v\circ\tau =
\int_I \varphi\left(\frac{f^*}{v\circ\tau}\right)v\circ\tau.
\end{equation}
Since the range of $\tau$ is equal to $A$, setting $v_1=v \circ\tau$ we have $v_1^* \le v^*$, and hence $v_1\prec w$.

Now let $|A|<\infty$. Setting $B=\{t: |f(t)| = K\}$, we have $|B|=\infty$. We find
onto and measure preserving transformations $\tau_1: (0,|A|) \to A$ and
$\tau_2: (|A|, \infty) \to B$ such that $|f|\circ \tau_1(t) = f^*(t)$ for $t\in (0,|A|)$, and $|f|\circ \tau_2(t) = f^*(t)=K$ for $t\in (|A|, \infty)$. Thus
$\tau = \tau_1|_{(0,|A|)} + \tau_2|_{(|A|, \infty)}$ is a measure preserving mapping from $(0,\infty)$ onto $A \cup B$ and such that $|f|\circ \tau = f^*$ on $I$. Then
\begin{equation}\label{eq:32}
\int_I \varphi\left(\frac{|f|}{v}\right)v \ge \int_{A\cup B} \varphi\left(\frac{|f|}{v}\right)v =
\int_I \varphi\left(\frac{|f|\circ\tau}{v\circ\tau}\right)v\circ\tau =
\int_I \varphi\left(\frac{f^*}{v\circ\tau}\right)v\circ\tau.
\end{equation}
We have that $(v\circ \tau)^* \le v^*$ and so $v_1 = v\circ \tau \prec w$.

By (\ref{eq:31}) and (\ref{eq:32}), 
for any $v\prec w$ we can find $v_1 \prec w$ such that
\begin{equation}\label{eq:33}
\int_I \varphi\left(\frac{|f|}{v}\right)v \ge
\int_I \varphi\left(\frac{f^*}{v_1}\right)v_1\ge P(f^*).
\end{equation}
Let now $f\in L^0$ and $v\prec w$ be such that $\int_I\varphi\left(\frac{|f|}{v}\right)v< \infty$. There exists a sequence $\{g_n\}$ of countable valued and measurable functions such that
\[
|f| \le g_n \le v\varphi^{-1} \left(\left(1 + \frac{1}{n}\right) \varphi\left(\frac{|f|}{v}\right)\right) \ \ \ \text{and} \ \ \ g_n \downarrow |f|\ \text{a.e.}.
\]
Hence $\varphi\left(\frac{g_n}{v}\right)v\downarrow  \varphi\left(\frac{|f|}{v}\right)v$ and $\varphi\left(\frac{g_n}{v}\right)v\le 2 \varphi\left(\frac{|f|}{v}\right)v$ a.e., and by the Lebesgue convergence theorem,
\[
\lim_{n\to\infty}\int_I\varphi\left(\frac{g_n}{v}\right)v =  \int_I\varphi\left(\frac{|f|}{v}\right)v.
\]
Hence for any $\epsilon > 0$ there exists $m\in\mathbb{N}$ such that
\[
\int_I\varphi\left(\frac{|f|}{v}\right)v \ge  \int_I\varphi\left(\frac{g_m}{v}\right)v - \epsilon.
\]
Now by (\ref{eq:33}) there is $v_1 \prec w$ and such $\int_I\varphi\left(\frac{g_m}{v}\right)v \ge  \int_I\varphi\left(\frac{g_m^*}{v_1}\right)v_1$. But $g_m^* \ge f^*$, and so
\[
\int_I\varphi\left(\frac{|f|}{v}\right)v \ge  \int_I\varphi\left(\frac{g^*_m}{v_1}\right)v_1 - \epsilon
\ge  \int_I\varphi\left(\frac{f^*}{v_1}\right)v_1 - \epsilon \ge P(f^*) - \epsilon.
\]

\end{proof}

The next corollary provides alternative formulas for $\|\cdot\|_P$ and $\|\cdot\|_p$.
It follows by Proposition \ref{prop:ri} and Theorem \ref{integral:ineq} in function case and Theorem \ref{th:infinite:equality1} in sequence case.

\begin{corollary}\label{cor:3}
For any $f\in L^0$ and $x= \{x(n)\}$,
\[
P(f) = \inf\left\{\int_I \varphi\left(\frac{f^*}{v}\right) v: v\prec w, v\downarrow \right\}, \ \ \
p(x) = \inf\left\{ \sum_{n=1}^\infty \varphi\left( \frac{x^*(n)}{v(n)}\right) v(n): v\prec w, v\downarrow\right\},
\]
where $v\downarrow$ denotes a non-negative decreasing function or sequence.

\end{corollary}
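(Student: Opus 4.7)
The plan is to deduce the claim from Proposition \ref{prop:ri} together with the formulas of Theorem \ref{integral:ineq}\,(i) for the function case and Theorem \ref{th:infinite:equality1} for the sequence case. I write the argument for functions; the sequence case is formally identical. Since $P(f) = P(f^*)$ by Proposition \ref{prop:ri}, it is enough to establish
\[
P(f^*) \;=\; \inf\left\{\int_I \varphi\!\left(\frac{f^*}{v}\right)v : v\prec w,\ v\downarrow\right\}.
\]

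One direction is immediate: any decreasing $v\geq 0$ with $v\prec w$ is admissible in the infimum defining $P(f^*)$, so $P(f^*)$ is bounded above by the right-hand side. For the converse, the key step is to replace any candidate $v\geq 0$ satisfying $v\prec w$ by its decreasing rearrangement $v^*$ without increasing the integral. Since submajorization depends only on rearrangements, $v^*\prec w$ is automatic, and of course $v^*\downarrow$. Applying Theorem \ref{integral:ineq}\,(i) with decreasing weight $v^*$ and function $f^*$ yields
\[
\int_I \varphi\!\left(\frac{f^*}{v^*}\right)v^* \;=\; \inf\left\{\int_I \varphi\!\left(\frac{f^*}{u}\right)u : u\sim v^*,\ u\geq 0\right\},
\]
where we used $(f^*)^* = f^*$. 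Since $v$ itself is equimeasurable with $v^*$, the function $u=v$ is admissible in this infimum, hence
\[
\int_I \varphi\!\left(\frac{f^*}{v^*}\right)v^* \;\leq\; \int_I \varphi\!\left(\frac{f^*}{v}\right)v.
\]
Taking the infimum over all $v\geq 0$ with $v\prec w$ on both sides and combining with $P(f) = P(f^*)$ closes the argument for functions. The sequence identity $p(x) = p(x^*)$ from Proposition \ref{prop:ri} and Theorem \ref{th:infinite:equality1} give the analogous conclusion for sequences.

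I do not expect a substantive obstacle. The only nuance is that $v^*$ may vanish on part of $I$ (for instance when $v$ has bounded support), but the convention on $\varphi(c/0)\cdot 0$ fixed in Section \ref{prelim} keeps all integrals well-defined and preserves the inequality in this degenerate regime. Thus the whole proof reduces to a direct rearrangement step plugged into the Hardy--Littlewood-type formula already established in Section \ref{sec:New formulas}.
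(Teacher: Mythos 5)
Your argument is correct and is exactly the route the paper intends: the paper derives Corollary~\ref{cor:3} from Proposition~\ref{prop:ri} together with Theorem~\ref{integral:ineq} (resp.\ Theorem~\ref{th:infinite:equality1}), and your replacement of a candidate $v$ by $v^*$ via the equality $\int_I\varphi(f^*/v^*)v^*=\inf\{\int_I\varphi(f^*/u)u: u\sim v^*,\ u\ge 0\}$ is precisely how those results are meant to be applied. You have merely written out the details the paper leaves implicit, including the harmless degenerate case where $v^*$ vanishes.
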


\begin{lemma}\label{lem:Fatou}
For any non-negative functions $f_n, f \in L^0$ such that $f_n \uparrow f$ and $\sup_n P(f_n) < \infty$, it holds $P(f_n) \uparrow P(f)$. The similar statement holds in sequence case.

\end{lemma}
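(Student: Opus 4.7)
The plan has two directions. The inequality $\sup_n P(f_n) \le P(f)$ is immediate, since for any admissible weight $v \ge 0$ with $v \prec w$, the integrand $\int_I \varphi(|\cdot|/v)\,v$ is pointwise monotone in $|f|$; from $|f_n| \le |f|$ and the infimum definition of $P(f_n)$ I would simply pass to the infimum on the right. So it remains to prove the reverse inequality $P(f) \le L := \sup_n P(f_n)$, assuming $L < \infty$.

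For that direction I would fix $\epsilon > 0$ and, using Corollary \ref{cor:3}, choose for each $n$ a decreasing nonnegative weight $v_n$ with $v_n \prec w$ such that
\[
\int_I \varphi\!\left(\frac{f_n^*}{v_n}\right) v_n \le P(f_n) + \epsilon \le L + \epsilon.
\]
Because each $v_n$ is decreasing and $\int_0^t v_n \le W(t)$, one has the pointwise bound $v_n(t) \le W(t)/t$ (the case $W \equiv \infty$ reduces to a trivial situation since then $\Lambda_{\varphi_*,w}=\{0\}$, cf.\ Theorem \ref{th:dual}, and can be handled separately). Helly's selection theorem then supplies a subsequence $(v_{n_k})$ converging pointwise a.e.\ on $I$ to a decreasing nonnegative function $v_\infty$. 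Applying Fatou's lemma to the inequalities $\int_0^t v_{n_k} \le W(t)$ yields $v_\infty \prec w$, so $v_\infty$ is admissible for $P$. Using the standard fact that $f_n \uparrow f$ implies $f_n^* \uparrow f^*$ pointwise, and applying Fatou's lemma once more to the nonnegative integrand $\varphi(f_{n_k}^*/v_{n_k})v_{n_k}$, I would obtain
\[
\int_I \varphi\!\left(\frac{f^*}{v_\infty}\right) v_\infty \le \liminf_k \int_I \varphi\!\left(\frac{f_{n_k}^*}{v_{n_k}}\right) v_{n_k} \le L + \epsilon.
\]
Since $v_\infty \prec w$, this gives $P(f^*) \le L + \epsilon$; Proposition \ref{prop:ri} yields $P(f) = P(f^*)$, and letting $\epsilon \to 0$ finishes the argument. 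The sequence-space case is analogous, with Helly's theorem replaced by a Cantor diagonal extraction on $\mathbb N$ using the pointwise bound $v_n(k) \le W(k)/k$.

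The main obstacle is verifying the pointwise inequality needed to apply Fatou's lemma at points where $v_\infty(t) = 0$ while $f^*(t) > 0$: by the convention adopted in the introduction the left-hand integrand there is $\infty$, and one must show that $\liminf_k \varphi(f_{n_k}^*(t)/v_{n_k}(t))\,v_{n_k}(t) = \infty$ as well. Since all $v_n$ are decreasing, the zero set $\{v_\infty = 0\}$ is a right tail $[b,a)$, which simplifies the bookkeeping. Writing $\varphi(s/t)t = s\,\varphi(s/t)/(s/t)$ with $\varphi(u)/u$ increasing, the desired blow-up is immediate when $\varphi$ is an $N$-function (since $\varphi(u)/u \to \infty$); for a general Orlicz function one can either reduce to that case by an equivalence argument, perturb $v_n$ to $v_n + \delta_n \chi_{[0,a_n)}$ with $\delta_n \downarrow 0$ well chosen so that $v_\infty > 0$ in the limit, or equivalently approximate $f$ from below by truncations that vanish on $[b,a)$. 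This perturbation step is where the bulk of the technical care is required; the Helly/Fatou skeleton otherwise goes through routinely.
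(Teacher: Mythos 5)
Your proof follows the paper's argument essentially verbatim: Helly's selection theorem applied to the decreasing weights $v_n(t)\le W(t)/t$, passage of the submajorization $v_\infty\prec w$ to the limit via Fatou, a second application of Fatou's lemma to the integrands, and the reduction to decreasing $f_n$ via $f_n^*\uparrow f^*$ and Proposition \ref{prop:ri} (the paper uses $\tfrac1n$ in place of your fixed $\eps$, an immaterial difference). The subtlety you flag at points where $v_\infty(t)=0<f^*(t)$ is genuine but is simply not addressed in the paper, which asserts $\lim_n\varphi(f_n/v_n)v_n=\varphi(f/v)v$ outright; of your proposed remedies, note that reduction to an $N$-function by equivalence is not available in general (e.g.\ $\varphi(t)=t$ is equivalent to no $N$-function), whereas the blow-up argument when $\varphi(u)/u\to\infty$ and the truncation/perturbation alternatives are sound, so your write-up is if anything more careful than the paper's.
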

\begin{proof}
By Corollary \ref{cor:3}, we can assume that each $f_n$ is decreasing and for every $n\in \mathbb{N}$ there exists a non-negative decreasing $v_n$ such that $v_n\prec w$ and $ \frac{1}{n} + P(f_n) \ge \int_I \varphi\left(\frac{f_n}{v_n}\right) v_n$, $n\in \mathbb{N}$. By submajorization of $v_n$ by $w$ we have $v_n(t) \le W(t)/t$, $n\in\mathbb{N}$. Hence the family $\{v_n\}$ is a sequence of decreasing functions bounded uniformly by a decreasing function on $I$.  So, by Helly's selection theorem \cite[Chapter 8, section 4]{N}, for some subsequence we have that $\lim_n v_n (t) = v(t)$ exists a.e. Obviously $v$ is decreasing on $I$ and $\int_0^t v \le \underline{\lim}_n \int_0^t v_n \le \int_0^t w$, and so $v\prec w$. Then by Fatou's Lemma we get
\[
P(f) \ge \lim_n P(f_n) =\lim_n \int_I \varphi\left(\frac{f_n}{v_n}\right) v_n \ge \int_I \lim_n \varphi\left(\frac{f_n}{v_n}\right) v_n = \int_I \varphi\left(\frac{f}{v}\right) v \ge P(f),
\]
and the proof is completed.

\end{proof}

Before the next proof, observe that the function $\psi(s,t)= \varphi(s/t)t,\, s,t > 0$, is convex with respect to two variables. In fact letting $s_i, t_i >0$, $i=1,2$, by convexity of $\varphi$, $\varphi\left(\frac{s_1 + s_2}{t_1 + t_2}\right) \le
\frac{t_1}{t_1 + t_2} \varphi\left( \frac{s_1}{t_1}\right)  + \frac{t_2}{t_1 + t_2}\varphi\left(\frac{s_2}{t_2}\right)$.  Hence
$\psi\left(\frac{s_1+ s_2}{2}, \frac{t_1 + t_2}{2}\right) \le \frac12 \varphi\left(\frac{s_1}{t_1}\right) t_1 + \frac12 \varphi\left(\frac{s_2}{t_2}\right) t_2 = \frac12 \psi(s_1,t_1) + \frac12 \psi(s_2,t_2)$.

\begin{theorem}\label{norm}
The spaces $(\mathcal{M}_{\varphi,w}, \|\cdot\|_\mathcal{M})$ and $(\mathfrak{m}_{\varphi,w}, \|\cdot\|_\mathfrak{m})$ are rearrangement invariant Banach spaces satisfying the Fatou property.

\end{theorem}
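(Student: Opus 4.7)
The plan is to verify in turn that $\|\cdot\|_{\mathcal M}$ is a norm, that it is solid and rearrangement invariant, that it has the Fatou property, and that the resulting normed space is complete. Positive homogeneity and solidity are immediate from the definition: rescaling the test $\varepsilon>0$ in the infimum gives $\|\lambda f\|_{\mathcal M}=|\lambda|\,\|f\|_{\mathcal M}$, and the monotonicity of $\varphi$ yields $P(g/\varepsilon)\le P(f/\varepsilon)$ whenever $|g|\le|f|$. Faithfulness was already observed in Remark~\ref{embeddings}, and rearrangement invariance is exactly Proposition~\ref{prop:ri}.

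The core step is the triangle inequality. Put $a=\|f\|_{\mathcal M}$, $b=\|g\|_{\mathcal M}$; the case $a=0$ or $b=0$ is trivial by solidity, so assume $a,b>0$. Applying Lemma~\ref{lem:Fatou} to $|f|/\varepsilon_n$ with $\varepsilon_n\downarrow a$ yields $P(f/a)\le 1$, and similarly $P(g/b)\le 1$. Given $\delta>0$, pick $v_1,v_2\ge 0$ with $v_i\prec w$ such that
\[
\int_I\varphi\!\left(\frac{|f|}{av_1}\right)v_1\le 1+\delta,\qquad \int_I\varphi\!\left(\frac{|g|}{bv_2}\right)v_2\le 1+\delta.
\]
Set $v=(av_1+bv_2)/(a+b)$; since $u\mapsto\int_0^t u^*$ is sublinear the cone $\{u\ge 0:u\prec w\}$ is convex, so $v\prec w$. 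The function $\psi(s,t)=\varphi(s/t)\,t$ is jointly convex on $\mathbb{R}_+^2$ (as noted just before the statement) and positively $1$-homogeneous, hence subadditive; it is also nondecreasing in $s$. Consequently,
\[
\psi\bigl(|f+g|,(a+b)v\bigr)\le \psi\bigl(|f|+|g|,av_1+bv_2\bigr)\le \psi(|f|,av_1)+\psi(|g|,bv_2).
\]
Integrating and dividing by $a+b$ gives $\int_I\varphi(|f+g|/((a+b)v))\,v\le 1+\delta$. A final rescaling by $1+\delta$, using the convexity inequality $\varphi(t/(1+\delta))\le\varphi(t)/(1+\delta)$, yields $P((f+g)/((1+\delta)(a+b)))\le 1$, whence $\|f+g\|_{\mathcal M}\le(1+\delta)(a+b)$; sending $\delta\downarrow 0$ finishes the triangle inequality.

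The Fatou property of the norm then follows at once: if $0\le f_n\uparrow f$ with $K=\sup_n\|f_n\|_{\mathcal M}<\infty$, then for any $\eta>0$ one has $P(f_n/(K+\eta))\le 1$ for all $n$, so Lemma~\ref{lem:Fatou} gives $P(f/(K+\eta))\le 1$ and therefore $\|f\|_{\mathcal M}\le K+\eta$; combined with solidity this yields $\|f_n\|_{\mathcal M}\uparrow\|f\|_{\mathcal M}$. Completeness is then the standard Luxemburg-type consequence of the Fatou property on a solid normed ideal: given a Cauchy sequence in $\mathcal M_{\varphi,w}$, extract a subsequence with summable increments, apply triangle inequality and Fatou to the partial sums $g_N=\sum_{k=1}^N|f_{n_{k+1}}-f_{n_k}|$ to obtain a.e.\ convergence of $(f_{n_k})$, and invoke Fatou once more to upgrade this to norm convergence. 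The sequence case $(\mathfrak m_{\varphi,w},\|\cdot\|_\mathfrak m)$ is handled identically, replacing integrals by sums and using Theorem~\ref{th:infinite:equality1} and the discrete analogue of Lemma~\ref{lem:Fatou}.

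The main obstacle is the triangle inequality, because the infimum defining $P$ ranges over the \emph{nonlinear} set $\{v\ge 0:v\prec w\}$, and one must assemble a single admissible competitor $v$ for $f+g$ from witnesses $v_1,v_2$ chosen separately for $f$ and $g$. What rescues the argument is a fortuitous coincidence of two convex structures: the joint convexity and $1$-homogeneity of $\psi(s,t)=\varphi(s/t)\,t$, together with the convexity of the submajorization cone; these together make the averaging $v=(av_1+bv_2)/(a+b)$ both admissible and compatible with the subadditivity of $\psi$.
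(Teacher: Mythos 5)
Your proposal is correct and follows essentially the same route as the paper: both arguments rest on the joint convexity (hence, with $1$-homogeneity, subadditivity) of $\psi(s,t)=\varphi(s/t)t$, the convexity of the cone $\{v\ge 0: v\prec w\}$ coming from $(h_1+h_2)^*\prec h_1^*+h_2^*$, Proposition~\ref{prop:ri} for rearrangement invariance, and Lemma~\ref{lem:Fatou} for the Fatou property and thence completeness. The only cosmetic difference is that the paper establishes convexity of the modular $P$ (via the midpoint average $h=(h_1+h_2)/2$) and then invokes the standard Luxemburg-norm mechanism, whereas you prove the triangle inequality directly with the weighted average $v=(av_1+bv_2)/(a+b)$; these are interchangeable.
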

\begin{proof} We prove it only in function case. We show first that $\|\cdot\|_\mathcal{M}$ is a norm. The fact that this functional is faithful was observed in Remark \ref{embeddings}. It remains only to show that the functional $P$ is convex for proving that its the homogeneous functional $\|\cdot\|_\mathcal{M}$ satisfies the triangle inequality. Let $P(f_i) < \infty$ for $i=1,2$, and $\epsilon > 0$ be arbitrary. There exist $h_i$, $i=1,2$, such that $h_i\prec w$ and
\[
 P(f_i) + \epsilon \ge \int_I \varphi\left(\frac{|f_i|}{h_i}\right) h_i, \ \ \ \ i=1,2.
\]
For $h=\frac{h_1 + h_2}{2}$, by subadditivity $(h_1 + h_2)^* \prec h_1^* + h_2^*$ we get $h\prec w$. Thus, in view of convexity of the function $(t,s)\mapsto \varphi(t/s)s,\, s,t > 0$, it follows
\begin{align*}
 P\left(\frac{f_1 + f_2}{2}\right) &\le \int_I \varphi\left(\frac{|f_1+f_2|}{2h}\right) h\\ & \le
\frac12 \int_I \varphi\left(\frac{|f_1|}{h_1}\right) h_1 + \frac12 \int_I \varphi\left(\frac{|f_2|}{h_2}\right) h_2\\
&\le \frac12\left(P(f_1) + P(f_2)\right) + \epsilon.
\end{align*}
Hence $P$ is convex and so $\|\cdot\|_\mathcal{M}$ is a norm on $\mathcal{M}_{\varphi,w}$. It is rearrangement invariant in view of Proposition \ref{prop:ri}.

Finally let $f_n\in \mathcal{M}_{\varphi,w}$, $f\in L^0$ be non-negative,  $f_n\uparrow f$ and $\sup_n\|f_n\|_\mathcal{M} = K < \infty$. Then
$P\left(\frac{f_n}{K}\right) \le 1$ for all $n\in \mathbb{N}$, and
 by Lemma \ref{lem:Fatou}, $P\left(\frac{f_n}{K}\right) \uparrow P\left(\frac{f}{K}\right) \le 1$. Thus
 $\|f\|_{\mathcal{M}} \le \sup_n \|f_n\|_\mathcal{M}$, and so $\|f\|_{\mathcal{M}} = \sup_n \|f_n\|_{\mathcal{M}}$. Therefore  $\mathcal{M}_{\varphi,w}$ has the Fatou property, and thus it is complete in view of \cite[Theorem  1.6]{BS}.
 \end{proof}

 \begin{remark}\label{rem:concavity of P}
If $\varphi$ is equivalent to a $p$-concave Orlicz function for some $1<p<\infty$ then $\mathcal{M}_{\varphi,w}$ is a $p$-concave Banach lattice. The proof is similar to that of Proposition \ref{prop:concavity} and Corollary \ref{cor:316}.
\end{remark}

\subsection{When do $\mathcal{M}_{\varphi,w}$ and $M_{\varphi,w}$ coincide?}

We have seen in Proposition \ref{prop:equiv} that if $w$ is regular then the classes $\mathcal{M}_{\varphi,w}$ and $M_{\varphi,w}$ coincide and the associated functionals  $\|\cdot\|_\mathcal{M}$ and $\|\cdot\|_M$ are equivalent. In this subsection we shall give a partial converse in the case when $\alpha_\varphi>1$. For this aim we shall compare the fundamental functions of $\mathcal{M}_{\varphi,w}$ and $M_{\varphi,w}$.

\begin{proposition}\label{fundamental-P-space}
Let $w$ be a weight function such that $W(t) < \infty$, $t\in I$. The fundamental function of the space $\mathcal{M}_{\varphi,w}$ is given by
\[
F_\mathcal{M}(t)= \|\chi_{(0,t)}\|_\mathcal{M} = {t\over W(t)\varphi^{-1}(1/W(t))}, \ \ \ t\in I.
\]
The analogous formula is also valid in the sequence space $\mathfrak{m}_{\varphi,w}$.
\end{proposition}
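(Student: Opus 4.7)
My plan is to compute $P(\chi_{(0,t)}/\epsilon)$ in closed form as a function of $\epsilon > 0$ and then invert. By Corollary \ref{cor:3} (applied to the already-decreasing function $\chi_{(0,t)}$) the infimum defining $P$ may be restricted to non-negative decreasing $v\prec w$; moreover such a $v$ must be strictly positive on $(0,t)$, since otherwise by monotonicity it vanishes on some $(t_0,\infty)$ with $t_0<t$ and the integral is $+\infty$ by the convention. Writing $c=1/\epsilon$ and $\psi_c(u):=u\varphi(c/u)$ for $u>0$, the problem becomes
\[
P(\chi_{(0,t)}/\epsilon) = \inf\left\{\int_0^t \psi_c(v(s))\,ds : v\prec w,\ v\downarrow,\ v>0\text{ on }(0,t)\right\}.
\]

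Next I would exploit two properties of $\psi_c$ already noted in the paper: it is convex on $(0,\infty)$ (the joint convexity of $(s,t)\mapsto t\varphi(s/t)$ stated just before Theorem \ref{norm}) and it is decreasing in $u$ (stated in the preliminaries). Jensen's inequality applied with the uniform probability measure on $(0,t)$ gives
\[
\int_0^t \psi_c(v(s))\,ds \;\ge\; t\,\psi_c\!\left(\tfrac{1}{t}\int_0^t v\right),
\]
and the submajorization $v\prec w$ together with the monotonicity of $v$ yields $\tfrac{1}{t}\int_0^t v\le W(t)/t$. Since $\psi_c$ decreases, this produces the lower bound
\[
\int_0^t \psi_c(v(s))\,ds \;\ge\; t\,\psi_c(W(t)/t) \;=\; W(t)\,\varphi\!\left(\tfrac{t}{\epsilon W(t)}\right).
\]

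For the matching upper bound I would test against $v_0=(W(t)/t)\chi_{(0,t)}$, which is clearly decreasing and satisfies $v_0\prec w$: for $s\le t$, $\int_0^s v_0=sW(t)/t\le W(s)$ by the decreasing monotonicity of $W(\cdot)/\mathrm{id}$ recalled in Section \ref{prelim}, while for $s\ge t$, $\int_0^s v_0=W(t)\le W(s)$. A direct computation then gives equality in the lower bound, so $P(\chi_{(0,t)}/\epsilon)=W(t)\varphi(t/(\epsilon W(t)))$. Setting this equal to $1$ and solving for $\epsilon$ yields $F_\mathcal{M}(t)=t/(W(t)\varphi^{-1}(1/W(t)))$, as claimed. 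The sequence case is essentially verbatim: apply the discrete Jensen inequality to the convex, decreasing $\psi_c$, exploit the stated decreasing monotonicity of $\{W(n)/n\}$, and test against $v_0(i)=(W(n)/n)\chi_{\{1,\ldots,n\}}(i)$. The only delicate point, which is quite minor, is the verification that the constant test function $v_0$ is admissible and optimal; after that the argument is just Jensen's inequality plus a scalar inversion.
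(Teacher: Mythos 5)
Your proof is correct and follows essentially the same route as the paper's: a Jensen-type lower bound valid for every admissible decreasing $v\prec w$, matched exactly by the test weight $v_0=(W(t)/t)\chi_{(0,t)}$, followed by inversion of $\epsilon\mapsto W(t)\varphi\bigl(t/(\epsilon W(t))\bigr)$. The only cosmetic difference is that you apply Jensen with the uniform measure on $(0,t)$ to the convex perspective function $u\mapsto u\varphi(c/u)$ and then use its monotonicity, whereas the paper applies Jensen to $\varphi$ with respect to the probability measure $v(s)\,ds/V(t)$ and then passes from $V(t)$ to $W(t)$ via the same monotonicity; the two computations are interchangeable.
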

\begin{proof}
For every constant $c>0$ and every decreasing weight $v\prec w$ over  $(0,t)$ we have
by Jensen's inequality
\[
\int_0^t \varphi\left(\frac{1}{c v(s)}\right) \frac{v(s)}{V(t)} \, ds
\ge \varphi\left(\int_0^t \frac{1}{c V(t)}\, ds\right) =  \varphi\left(\frac{t}{c V(t)}\right),
\]
and since $V\le W$ and for $\alpha>0$ the function $t\mapsto t\varphi(\alpha/t)$  is decreasing for $t>0$, we get
\[
\int_0^t \varphi\left(\frac{1}{c v(s)}\right) v(s) \, ds\ge W(t)\varphi\left(\frac{t}{c W(t)}\right).
\]
Taking the infimum with respect to decreasing weights $v\prec w$, by Corollary \ref{cor:3} we obtain
\[
P\left(\frac 1c \chi_{(0,t)}\right)\ge W(t)\varphi\left(\frac{t}{c W(t)}\right).
\]
The weight $v_0=\frac{W(t)}t\chi_{(0,t)}$ is decreasing with support $(0,t)$ and since $W(u)/u$ is decreasing we have for $0\le u\le t$,
\[
V_0(u)=\int_0^u v_0(s)\,ds= \frac ut W(t)\le W(u),
\]
while for $u > t$,
\[
V_0(u)=V_0(t)=W(t)\le W(u).
\]
Hence $v_0\prec w$, and so
\[
P\left(\frac 1c \chi_{(0,t)}\right)\le \int_0^t \varphi\left(\frac 1{cv_0(s)}\right)v_0(s)\,ds= \varphi\left(\frac t{c \,W(t)}\right) W(t).
\]
Consequently for every $c>0$ we obtain the equality
\[
P\left(\frac 1c \chi_{(0,t)}\right)= W(t)\varphi\left(\frac{t}{c W(t)}\right),
\]
which implies that $F_\mathcal{M}(t)=\|\chi_{(0,t)}\|_\mathcal{M}$ is the unique solution $c$ of $P\left(\frac 1c \chi_{(0,t)}\right)=1$, and gives the desired formula.
\end{proof}

\begin{proposition}\label{prop:equality P=M}
 Let $\alpha_\varphi>1$ and $w$ be a weight function such that $W(t) < \infty$, $t\in I$. If the norm $\|\cdot\|_\mathcal{M}$ of $\mathcal{M}_{\varphi,w}$ and the functional $\|\cdot\|_M$ of $M_{\varphi,w}$ are equivalent on $\mathcal{M}_{\varphi,w}$ then the weight $w$ is regular. In particular the spaces $\mathcal{M}_{\varphi,w}$ and $M_{\varphi,w}$ are equal with equivalent quasi-norms if and only if $w$ is regular. A similar statement is valid for sequence spaces.
 \end{proposition}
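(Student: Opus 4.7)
The strategy is to read the hypothesis on norms as a relation between fundamental functions, invert it using the explicit formula for $F_\mathcal{M}$ obtained in Proposition \ref{fundamental-P-space}, and extract regularity of $w$ via the index hypothesis $\alpha_\varphi>1$. Combined with Proposition \ref{prop:equiv}, the ``in particular'' clause follows immediately.

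\emph{Step 1.} Assume the functionals $\|\cdot\|_M$ and $\|\cdot\|_\mathcal{M}$ are equivalent on $\mathcal{M}_{\varphi,w}$. Since $W(t)<\infty$ for every $t\in I$, the indicators $\chi_{(0,t)}$ belong to $M_{\varphi,w}\subset \mathcal{M}_{\varphi,w}$, and testing the equivalence on them yields $F_\mathcal{M}\asymp F_M$ on $I$. Being the fundamental function of a rearrangement invariant Banach space (Theorem~\ref{norm}), $F_\mathcal{M}$ is quasi-concave, so $F_\mathcal{M}(2t)\le 2F_\mathcal{M}(t)$; hence $F_M$ also satisfies condition $\Delta_2$. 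Lemma~\ref{Fundamental M} then gives $F_M\asymp G_M$, and composing with Step~1 one obtains
\[
F_\mathcal{M}(t)=\frac{t}{W(t)\,\varphi^{-1}(1/W(t))}\ \asymp\ G_M(t)=\frac{1}{w(t)\,\varphi^{-1}(1/(tw(t)))}.
\]

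\emph{Step 2.} Cross-multiplying this equivalence and setting $A(t):=tw(t)/W(t)\in (0,1]$ (the upper bound comes from monotonicity of $w$), one obtains
\[
A(t)\ \asymp\ \frac{\varphi^{-1}(1/W(t))}{\varphi^{-1}(1/(tw(t)))}
\ =\ \frac{\varphi^{-1}(A(t)\cdot v(t))}{\varphi^{-1}(v(t))}, \qquad v(t):=\frac{1}{tw(t)}.
\]
The hypothesis $\alpha_\varphi>1$ translates into $\beta_{\varphi^{-1}}=1/\alpha_\varphi<1$, so we may fix $q<1$ and a constant $d\ge 1$ with $\varphi^{-1}(\lambda u)\ge d^{-1}\lambda^{q}\varphi^{-1}(u)$ for every $u>0$ and $\lambda\in (0,1]$. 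Applied with $u=v(t)$ and $\lambda=A(t)$, this gives a lower bound of the ratio above of the form $d^{-1} A(t)^q$.

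\emph{Step 3.} Combining the lower equivalence constant in Step~2 with the index bound produces an inequality of the form $A(t)\ge c\,A(t)^q$ with $q<1$ and $c>0$ independent of $t$. Solving for $A(t)$, one gets $A(t)\ge c^{1/(1-q)}$ uniformly on $I$, which is precisely the regularity of $w$. The ``in particular'' part then follows by combining the result with Proposition~\ref{prop:equiv}. The sequence case is handled identically, with the discrete fundamental function formula of Proposition~\ref{fundamental-P-space} replacing the continuous one.

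The main obstacle I anticipate is the transition from equivalent norms to equivalent fundamental functions accompanied by the $\Delta_2$ property: one must argue that even though $F_M$ is defined through a Luxemburg-type implicit equation (and therefore not \emph{a priori} quasi-concave since $\|\cdot\|_M$ need not be a norm), its equivalence to the Banach fundamental function $F_\mathcal{M}$ is what unlocks Lemma~\ref{Fundamental M}. Once the explicit equivalence $F_\mathcal{M}\asymp G_M$ is in hand, the index manipulation is routine.
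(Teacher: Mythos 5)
Your proof is correct and follows essentially the same route as the paper's: compare the explicit fundamental function $F_\mathcal{M}$ from Proposition \ref{fundamental-P-space} with $G_M$, and use $\beta_{\varphi^{-1}}=1/\alpha_\varphi<1$ to force $A(t)=tw(t)/W(t)$ to be bounded below. The only difference is organizational: the paper bypasses your detour through quasi-concavity of $F_\mathcal{M}$ and Lemma \ref{Fundamental M} by using only the unconditional one-sided bound $G_M(t)\le F_M(2t)$ from (\ref{estim F_M}), applying the index estimate with $\lambda=W(2t)/(tw(t))\ge 1$, and finishing with $W(2t)\ge W(t)$.
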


 \begin{proof} We will conduct the proof only in function case when $I=(0,\infty)$.
 By hypothesis, for some constant $C$ and every $t\in I$, $F_M(t)\le CF_\mathcal{M}(t)$. Thus in view of inequalities (\ref{estim F_M}) and Proposition \ref{fundamental-P-space},
\[
 \frac 1{w(t)\varphi^{-1}\left(\frac 1{tw(t)}\right)}\le F_M(2t)\le CF_\mathcal{M}(2t)= \frac {2Ct}{W(2t)\varphi^{-1}\left(\frac 1{W(2t)}\right)}, \ \ \ t>0,
 \]
 that is  for $t>0$,
\begin{align}\label{control of W}
W(2t)\varphi^{-1}\left(\frac 1{W(2t)}\right)\le 2C tw(t) \varphi^{-1}\left(\frac 1{tw(t)}\right).
\end{align}
Since $\alpha_\varphi>1$ so $\beta_{\varphi^{-1}} = 1/\alpha_\varphi<1$, and hence for some $\varepsilon>0$ and $K>0$ we have for every $\lambda\ge 1$ and $u>0$,
\[
\varphi^{-1}(\lambda u)\le K\lambda^{1-\varepsilon}\varphi^{-1}(u).
\]
Thus in view of $\frac{W(2t)}{tw(t)}>1$ we have
\[
2C tw(t) \varphi^{-1}\left(\frac 1{tw(t)}\right)<2CK tw(t)\left( \frac{W(2t)}{tw(t)}\right)^{1-\varepsilon}\varphi^{-1}\left(\frac 1{W(2t)}\right).
\]
Combining the above  with inequality (\ref{control of W}) we get easily
\[
\frac {tw(t)}{W(2t)}\ge \frac 1{(2CK)^{1/\varepsilon}}, \ \ \ t > 0.
\]
Eventually, since $W(2t)\ge W(t)$ this implies that $w$ is regular and the proof is completed.
 \end{proof}

 \subsection{K\"othe duality}

\begin{proposition}\label{duality2}
Assume that $\varphi$ is $N$-function and $W(t)<\infty$ for $t\in I$. Then the K\"othe dual of the space $\mathcal{M}_{\varphi,w}$ is the Orlicz-Lorentz space $\Lambda_{\varphi_*, w}$ equipped with its Orlicz norm, that is $\mathcal{M}'_{\varphi,w} = \Lambda^0_{\varphi_*,w}$ with equality of norms.
\end{proposition}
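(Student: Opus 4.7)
The plan is to prove two inclusions separately, with matching norm inequalities. The easier inclusion $\mathcal{M}'_{\varphi,w}\subset \Lambda^0_{\varphi_*,w}$ (with $\|f\|^0_{\Lambda_{\varphi_*,w}}\le \|f\|_{\mathcal{M}'}$) will come for free from Theorem~\ref{th:dual} and the embedding noted in Remark~\ref{embeddings}: since $M_{\varphi,w}\subset \mathcal{M}_{\varphi,w}$ with $\|\cdot\|_\mathcal{M}\le \|\cdot\|_M$, the unit ball of $\mathcal{M}_{\varphi,w}$ contains the unit ball of $M_{\varphi,w}$, so any $f$ that pairs boundedly with the former pairs boundedly with the latter and with at least as small a dual norm. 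Combined with the isometric identification $M'_{\varphi,w}=\Lambda^0_{\varphi_*,w}$ from Theorem~\ref{th:dual}, this yields the desired inclusion and norm estimate.

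For the reverse inclusion $\Lambda^0_{\varphi_*,w}\subset\mathcal{M}'_{\varphi,w}$ with $\|f\|_{\mathcal{M}'}\le \|f\|^0_{\Lambda_{\varphi_*,w}}$, I would fix $f\in\Lambda^0_{\varphi_*,w}$ and $g\in\mathcal{M}_{\varphi,w}$, and aim to show
\[
\int_I |fg|\le \|f\|^0_{\Lambda_{\varphi_*,w}}\,\|g\|_\mathcal{M}.
\]
By the standard Hardy-Littlewood inequality $\int_I|fg|\le \int_I f^*g^*$, so one may work with $f^*$ and $g^*$. For any $c'>\|g\|_\mathcal{M}$ we have $P(g/c')\le 1$, and by Corollary~\ref{cor:3}, for every $\epsilon>0$ there is a decreasing weight $v$ with $v\prec w$ and $\int_I \varphi\!\left(\frac{g^*}{c'v}\right)v\le 1+\epsilon$. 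Then Young's inequality with exponents $(a,b)=(g^*/(c'v),kc'f^*)$ for $k>0$ gives pointwise
\[
kf^*g^*=abv\le v\varphi\!\left(\frac{g^*}{c'v}\right)+v\varphi_*(kc'f^*).
\]

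The key step is to replace $v$ by $w$ in the integral of the last term. Since $f^*$ is decreasing and $\varphi_*$ is increasing, the function $\varphi_*(kc'f^*)$ is non-negative and decreasing; combined with $v\prec w$, the classical Hardy-Littlewood-Pólya-Lorentz inequality $\int_I hv\le \int_I hw$ for non-negative decreasing $h$ (a one-line Fubini argument on level sets $\{h>s\}=(0,t_s)$, using $V(t_s)\le W(t_s)$) yields $\int_I v\varphi_*(kc'f^*)\le \int_I w\varphi_*(kc'f^*)$. Integrating the Young estimate over $I$ we obtain
\[
k\int_I f^*g^*\le (1+\epsilon)+\int_I w\,\varphi_*(kc'f^*).
\]
Dividing by $k$, using the trivial bound $1+\epsilon+\int w\varphi_*(kc'f^*)\le (1+\epsilon)(1+\int w\varphi_*(kc'f^*))$, substituting $\tilde k=kc'$, and invoking the Amemiya formula in Definition~\ref{def} gives
\[
\int_I f^*g^*\le (1+\epsilon)\,c'\,\|f^*\|^0_{L_{\varphi_*}(w)}=(1+\epsilon)\,c'\,\|f\|^0_{\Lambda_{\varphi_*,w}}.
\]
Letting $\epsilon\to 0$ and then $c'\downarrow\|g\|_\mathcal{M}$ finishes the proof.

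The main technical hurdle is thus the interaction between three dualities: Young's inequality is applied at the ``variable weight'' $v$, but the target norm is computed against the fixed weight $w$; the Hardy-Pólya-Lorentz transfer $v\prec w\Rightarrow\int hv\le\int hw$ (for decreasing $h$) is what bridges the gap, and the Amemiya formula then converts the parametrised estimate into the desired Orlicz norm. All other steps—positivity/symmetrisation by Hardy-Littlewood, approximating $\|g\|_\mathcal{M}$ from above by $c'$, and removing the $\epsilon$—are routine.
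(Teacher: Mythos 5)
Your proof is correct and follows essentially the same route as the paper: the easy inclusion via $M_{\varphi,w}\subset\mathcal{M}_{\varphi,w}$ and Theorem~\ref{th:dual}, and the reverse inclusion by selecting a decreasing $v\prec w$ from Corollary~\ref{cor:3}, transferring the $\varphi_*$-modular from $v$ to $w$ by Hardy's lemma on the decreasing function $\varphi_*(kc'f^*)$, and invoking the Amemiya formula. The only cosmetic difference is that you inline Young's inequality where the paper instead cites the already-established H\"older-type estimate $\int_I|hg|\le\|h\|_{M_{\varphi,v}}\|g\|^0_{\Lambda_{\varphi_*,v}}$ for the variable weight $v$.
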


\begin{proof}
Since $M_{\varphi,w}\subset \mathcal{M}_{\varphi,w}$, and the inclusion has norm not greater than one, we have
\[ \mathcal{M}'_{\varphi,w}\subset M'_{\varphi,w}=\Lambda^0_{\varphi_*,w},\]
where the inclusion has norm not greater than one, and the equality is  isometric by Theorem \ref{th:dual}.

Conversely let us prove that $\Lambda^0_{\varphi_*,w}\subset \mathcal{M}'_{\varphi,w}$, and the inclusion has norm not greater than one. First we note that for any $g\in \Lambda^0_{\varphi_*,w}$ and any non-negative weight $v$ with $v\prec w$ we have
\[ \int_I \varphi_*(g^*)v \le \int_I \varphi_*(g^*)v^*\le \int_I \varphi_*(g^*)w,\]
since the function $\varphi_*(g^*)$ is decreasing. Thus $\Lambda^0_{\varphi_*,w}\subset \Lambda^0_{\varphi_*,v}$, and from the Amemiya formula for the Orlicz norm by Definition \ref{def} we get
\[ \|g\|^0_{\Lambda_{\varphi_*, v}}= \|g^*\|^0_{L_{\varphi_*}(v)}\le \|g^*\|^0_{L_{\varphi_*}(w)} = \|g\|^0_{\Lambda_{\varphi_*, w}}.\]
Fix now $g\in \Lambda^0_{\varphi_*,w}$. Then for any $v\prec w$ and  $h\in M_{\varphi,v}$, we have
\[\int_I |hg|\le \|h\|_{M_{\varphi,v}}\|g\|^0_{\Lambda_{\varphi_*,v}}\le \|h\|_{M_{\varphi,v}}\|g\|^0_{\Lambda_{\varphi_*,w}}.\]
By Corollary \ref{cor:3}, if $h\in \mathcal{M}_{\varphi,w}$ has norm $\|h\|_{\mathcal{M}_{\varphi,w}}<1$ then there exists some decreasing $v$ such that $v\prec w$ and $\int_I \varphi(h^*/v)v<1$. Hence $\|h\|_{M_{\varphi,v}}\le 1$ and so
\[\int_I |hg|\le\|g\|_{\Lambda^0_{\varphi_*,w}}.\]
This shows that $g\in \mathcal{M}'_{\varphi,w}$ with norm $\|g\|_{\mathcal{M}'_{\varphi,w}}\le \|g\|_{\Lambda^0_{\varphi_*,w}}$.
\end{proof}

\begin{corollary}[K. Le\'snik]\label{cor:lesnik}
Assume that $\varphi$ is $N$-function and $W(t)<\infty$ for $t\in I$.
Then the space  $\mathcal{M}_{\varphi,w}$ is equal to the K\"othe dual of the Orlicz-Lorentz space $\Lambda^0_{\varphi_*, w}$, that is $(\Lambda^0_{\varphi_*,w})' = \mathcal{M}_{\varphi,w}$ with equality of norms.
\end{corollary}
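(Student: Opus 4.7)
The plan is to derive this corollary by a single application of the Köthe bidual identification for Banach function spaces with the Fatou property, combined with the duality statement of Proposition \ref{duality2}.

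First, I would invoke Theorem \ref{norm}, which establishes that $\mathcal{M}_{\varphi,w}$ is a rearrangement invariant Banach function space with the Fatou property. By the classical theorem of Lozanovskii--Luxemburg--Zaanen (see e.g. \cite[Theorem 2.7 in Chap.~1]{BS} or \cite{KA}), every Banach function space with the Fatou property coincides isometrically with its second Köthe dual. Hence
\[
\mathcal{M}_{\varphi,w} = \mathcal{M}''_{\varphi,w} \quad \text{isometrically}.
\]

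Next, I would apply Proposition \ref{duality2}, which identifies the Köthe dual $\mathcal{M}'_{\varphi,w}$ with $\Lambda^0_{\varphi_*,w}$ isometrically. Taking the Köthe dual of both sides of this identification gives
\[
\mathcal{M}''_{\varphi,w} = (\Lambda^0_{\varphi_*,w})' \quad \text{isometrically},
\]
since the operation of taking Köthe duals preserves isometric identifications. Combining this with the first step yields $\mathcal{M}_{\varphi,w} = (\Lambda^0_{\varphi_*,w})'$ with equal norms, which is the desired conclusion.

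There is no substantial obstacle: the argument is essentially a formal consequence of Proposition \ref{duality2} together with the Fatou property established in Theorem \ref{norm}. The only point worth remarking is that one needs the nontriviality hypothesis $W(t)<\infty$ for $t\in I$ to ensure that $\Lambda^0_{\varphi_*,w}$ is a genuine Banach function space (in particular, that its Köthe dual makes sense and is non-degenerate); this is exactly the hypothesis already assumed in Proposition \ref{duality2}, so the corollary inherits it without extra work.
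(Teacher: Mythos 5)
Your proposal is correct and follows essentially the same route as the paper: the paper likewise observes that $\mathcal{M}_{\varphi,w}$ is a K\"othe function space with the Fatou property, hence equals its K\"othe bidual with equal norm, and then concludes $\mathcal{M}_{\varphi,w}=\mathcal{M}''_{\varphi,w}=(\Lambda^0_{\varphi_*,w})'$ via Proposition \ref{duality2}. The only difference is the reference cited for the bidual theorem (the paper uses \cite[1.b.17 and p.~30, Remark 2]{LT2}), which is immaterial.
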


\begin{proof}
Since $\mathcal{M}_{\varphi,w}$ is a K\"othe function space in the sense of \cite[1.b.17]{LT2} and has the Fatou property, it holds by  \cite[p. 30, Remark 2]{LT2} that it is equal to its K\"othe bidual with equal norm. Then by Proposition \ref{duality2}
\[  \mathcal{M}_{\varphi,w}=\mathcal{M}_{\varphi,w}''=(\Lambda^0_{\varphi_*,w})'.\]
\end{proof}

Before we state the next result recall the definition of the Banach envelope of a quasi-Banach space $X$. Denote by $X^*$ the dual space to $X$, that is the (Banach) space of bounded linear functionals. Let us define a functional on $X$ by
\[ \trnorm x=\sup\{ |f(x)|: f\in X^*,\hbox{ and } \|f\|\le 1\}. \]
If $X^*$ separates the points of $X$ then $\trnorm \cdot$ is a norm on $X$. Then the {\it Banach envelope} $\widehat X$ of $X$ is simply the completion of the normed linear space $(X, \trnorm \cdot)$   \cite[pp. 27-28]{KPR}.

\begin{corollary}\label{cor:envelope}
Let $\varphi$ be $N$-function and $W(t)<\infty$ for $t\in I$.
The space $\mathcal{M}_{\varphi,w}$ is the K\"othe bidual of $M_{\varphi,w}$. Consequently if $M_{\varphi,w}$ is a linear space and $\varphi$ verifies condition $\Delta_2$, then $\mathcal{M}_{\varphi,w}$ is the Banach envelope of $M_{\varphi,w}$.
\end{corollary}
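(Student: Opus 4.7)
My plan has two logical parts matching the two statements in the corollary.

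For the K\"othe bidual identification, I would simply stack the two duality results already established. By Theorem \ref{th:dual}, $M_{\varphi,w}' = \Lambda^0_{\varphi_*,w}$ isometrically, the Orlicz norm being exactly dual to the homogeneous functional $\|\cdot\|_M$. By Corollary \ref{cor:lesnik}, $(\Lambda^0_{\varphi_*,w})' = \mathcal{M}_{\varphi,w}$ isometrically. Composing these, $M_{\varphi,w}'' = (M_{\varphi,w}')' = \mathcal{M}_{\varphi,w}$ isometrically.

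For the Banach envelope assertion, assume $M_{\varphi,w}$ is linear and $\varphi$ satisfies $\Delta_2$. By Lemma \ref{lem:quasi-norm}, $(M_{\varphi,w}, \|\cdot\|_M)$ is a quasi-Banach space. The first thing to do is to show $M_{\varphi,w}$ is order continuous. Under $\Delta_2$, $f \in M_{\varphi,w}$ forces $M(\lambda f) < \infty$ for every $\lambda > 0$ (iterate $\varphi(2u) \le K\varphi(u)$). If $f_n \downarrow 0$ a.e., then $f_n^* \downarrow 0$, so $\varphi(\lambda f_n^*/w)w \downarrow 0$ pointwise, dominated by the integrable $\varphi(\lambda f_1^*/w)w$; dominated convergence yields $M(\lambda f_n) \to 0$ for every $\lambda > 0$, whence $\|f_n\|_M \to 0$. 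Being an r.i.\ order continuous space over a $\sigma$-finite measure, $M_{\varphi,w}$ is separable, and the classical argument (continuous linear functionals induce $\sigma$-additive set functions via order continuity, then Radon--Nikodym) yields $(M_{\varphi,w})^* = M_{\varphi,w}' = \Lambda^0_{\varphi_*,w}$ isometrically.

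With this identification of the dual in hand, the envelope norm on $M_{\varphi,w}$ is
\[
\trnorm f = \sup\left\{\left|\int_I fg\right| : g \in \Lambda^0_{\varphi_*,w},\ \|g\|^0_{\Lambda_{\varphi_*,w}} \le 1\right\} = \|f\|_\mathcal{M},
\]
the last equality by Corollary \ref{cor:lesnik}. It remains to verify that $M_{\varphi,w}$ is dense in $(\mathcal{M}_{\varphi,w}, \|\cdot\|_\mathcal{M})$. A parallel argument shows $\mathcal{M}_{\varphi,w}$ is also order continuous: given $g \in \mathcal{M}_{\varphi,w}^+$ and $v \prec w$ near-optimal for $P(\lambda_0 g)$, the condition $\Delta_2$ makes $\int_I \varphi(\lambda g/v)v$ finite for every $\lambda > 0$, and then dominated convergence handles $g_n \downarrow 0$ with $g_n \le g$. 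For any $f \in \mathcal{M}_{\varphi,w}^+$ the truncations $f_n = \min(f,n)\chi_{A_n}$, with $A_n \uparrow I$ of finite measure, lie in $M_{\varphi,w}$ (bounded with bounded support, using that $\int_0^t \varphi(1/w)w < \infty$ for every $t \in I$) and satisfy $f - f_n \downarrow 0$, hence $\|f - f_n\|_\mathcal{M} \to 0$. Thus $M_{\varphi,w}$ equipped with the envelope norm $\trnorm\cdot = \|\cdot\|_\mathcal{M}$ is a dense subspace of the Banach space $\mathcal{M}_{\varphi,w}$, and its completion is $\mathcal{M}_{\varphi,w}$. The delicate step I expect to have to justify carefully is $(M_{\varphi,w})^* = M_{\varphi,w}'$ in the quasi-Banach setting; this is where the combination of linearity and $\Delta_2$ is essential, both to make $M_{\varphi,w}$ a genuine quasi-Banach space and to rule out singular functionals via order continuity.
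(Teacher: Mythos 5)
Your proposal is correct and its skeleton coincides with the paper's: the bidual identification is obtained by composing Theorem \ref{th:dual} with Corollary \ref{cor:lesnik}; the envelope statement is reduced to (a) order continuity of $M_{\varphi,w}$, giving $M_{\varphi,w}^*=M_{\varphi,w}'$ and hence $\trnorm{\cdot}=\|\cdot\|_{\mathcal M}$, and (b) density of $M_{\varphi,w}$ in $\mathcal M_{\varphi,w}$, obtained from order continuity of $\mathcal M_{\varphi,w}$. Where you genuinely diverge is in how the two order-continuity claims are justified. The paper cites the general symmetrization results of \cite[p.~279]{KR} for $M_{\varphi,w}$, and for $\mathcal M_{\varphi,w}$ argues structurally: $\Delta_2$ gives $\beta_\varphi<\infty$, hence $\varphi(t^{1/p})$ is equivalent to a concave function for some $p<\infty$, hence $\mathcal M_{\varphi,w}$ is $p$-concave (Remark \ref{rem:concavity of P}), contains no order copy of $\ell_\infty$, and is therefore order continuous; density is then deduced from the fact that $\mathcal M_{\varphi,w}$ is the closure of the simple integrable functions. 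Your route is more elementary and self-contained: dominated convergence applied directly to the modulars $M$ and $P$ (using $\Delta_2$ to make $M(\lambda f)$, resp.\ $\int\varphi(\lambda g/v)v$ for a near-optimal $v$, finite for \emph{all} $\lambda$), followed by truncation. This buys a proof that does not pass through concavity or $\ell_\infty$-containment, at the cost of one point you should state with care: the implication ``$f_n\downarrow 0$ a.e.\ $\Rightarrow f_n^*\downarrow 0$'' is false in general (e.g.\ $f_n=\chi_{(n,\infty)}$); it holds here only because the $f_n$ are dominated by $f_1\in M_{\varphi,w}$, whose finiteness of $M(\lambda f_1)$ forces $f_1^*$ to be finite everywhere and to vanish at infinity. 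With that hypothesis made explicit, both of your dominated-convergence arguments, and hence the whole proof, go through.
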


\begin{proof}
The first assertion is clear by Theorem \ref{th:dual} and Corollary \ref{cor:lesnik}. For the second one  by Lemma \ref{lem:quasi-norm} the assumption that $M_{\varphi,w}$ is linear implies that $\|\cdot\|_M$ is a quasi-norm.
 Now if $\varphi$ satisfies condition $\Delta_2$, then $M_{\varphi,w}$ is order continuous by  the general results on the symmetrization of Banach function spaces (see \cite[p. 279]{KR}). It results that the dual space  $M_{\varphi,w}^*$  coincides with the K\"othe dual $M'_{\varphi,w}$, and $\trnorm \cdot$ is simply the norm induced on $M_{\varphi, w}$ by the norm of its K\"othe bidual $\mathcal{M}_{\varphi,w}$. On the other hand, the condition  $\Delta_2$ for $\varphi$ implies that $\beta_\varphi < \infty$ and so there exists $\beta_\varphi < p < \infty$ such that the function $\varphi(t^{1/p})/t$ is pseudo-decreasing, that is $\varphi(t^{1/p})/t \ge C\varphi(u^{1/p})/u$ for $0<t<u$ and some $C>0$. Hence $\varphi(t^{1/p})$ is equivalent to a concave function \cite{MO, KMP1}, and by Remark \ref{rem:concavity of P},    the Banach function space $\mathcal{M}_{\varphi,w}$ is $p$-concave.  Then $\mathcal{M}_{\varphi,w}$ cannot contain an order isomorphic copy of $\ell_\infty$ and thus is order continuous (see e.g.  \cite{KA}).  Since $M_{\varphi, w}$ contains all simple integrable functions and $\mathcal{M}_{\varphi,w}$ is order continuous, then $\mathcal{M}_{\varphi,w}$ is the closure of all simple integrable functions \cite{BS}, and thus  we have $\widehat{M_{\varphi,w}}=\mathcal{M}_{\varphi,w}$.
\end{proof}

\begin{proposition}
Assume that $\varphi$ is  $N$-function and that $W(t)<\infty$, $t\in I$. The following assertions are equivalent.
\begin{itemize}
\item[(i)] $M_{\varphi,w}$ is a linear space and has a norm equivalent to the functional $\|\cdot \|_M$.
\item[(ii)] $M_{\varphi,w}=\mathcal{M}_{\varphi,w}$ and the functional $\|\cdot \|_M$ is equivalent to the norm $\|\cdot\|_\mathcal{M}$.
\end{itemize}
\noindent If moreover $\alpha_\varphi>1$ these conditions are equivalent to
\begin{itemize}
\item[(iii)] The weight $w$ is regular.
\end{itemize}
\end{proposition}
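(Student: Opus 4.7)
The implication (ii)$\Rightarrow$(i) is immediate: $(\mathcal{M}_{\varphi,w},\|\cdot\|_\mathcal{M})$ is a Banach space by Theorem \ref{norm}, so (ii) makes $M_{\varphi,w}$ a linear space with $\|\cdot\|_M$ equivalent to the norm $\|\cdot\|_\mathcal{M}$. Under the additional hypothesis $\alpha_\varphi>1$, the equivalence (ii)$\Leftrightarrow$(iii) follows directly from Propositions \ref{prop:equiv} ((iii)$\Rightarrow$(ii)) and \ref{prop:equality P=M} ((ii)$\Rightarrow$(iii)). The substantive content of the proposition is thus (i)$\Rightarrow$(ii) in the general case, and this is what I plan to establish by realising $M_{\varphi,w}$ as a Banach function space with a Fatou-type property and identifying it with its K\"othe bidual via Corollary \ref{cor:envelope}.

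The key auxiliary observation is that the homogeneous functional $\|\cdot\|_M$ already enjoys the Fatou property: if $0\le f_n\uparrow f$ a.e.\ with $L=\sup_n\|f_n\|_M<\infty$, then $f_n^*\uparrow f^*$ a.e., and monotone convergence applied to the modular $M$ gives $M(f/\epsilon)=\lim_n M(f_n/\epsilon)\le 1$ for every $\epsilon>L$, so $\|f\|_M\le L$; solidity provides the reverse inequality. Under (i), a standard symmetrisation (e.g.\ the Minkowski gauge of the solid convex hull of the unit ball of $\|\cdot\|_M$) produces a lattice norm $\|\cdot\|$ on $M_{\varphi,w}$ equivalent to $\|\cdot\|_M$, and the Fatou property of $\|\cdot\|_M$ together with the norm equivalence makes $(M_{\varphi,w},\|\cdot\|)$ a complete Banach function space with the following quasi-Fatou property: any monotone sequence in $L^0$ with bounded $\|\cdot\|$-norms converges to an element of $M_{\varphi,w}$ whose norm is comparable to the supremum of the norms.

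By the K\"othe duality theory of Banach function spaces, such a quasi-Fatou Banach function space coincides as a set with its K\"othe bidual, with the two norms equivalent; Corollary \ref{cor:envelope} identifies this bidual as $\mathcal{M}_{\varphi,w}$. Since equivalent norms on $M_{\varphi,w}$ pass through two iterations of K\"othe duality to equivalent norms on $\mathcal{M}_{\varphi,w}$ -- the dualisation of $\|\cdot\|_M$ yielding the Orlicz norm on $\Lambda^0_{\varphi_*,w}$ by Theorem \ref{th:dual}, and a further dualisation returning $\|\cdot\|_\mathcal{M}$ by Corollary \ref{cor:lesnik} -- one concludes $M_{\varphi,w}=\mathcal{M}_{\varphi,w}$ as sets with $\|\cdot\|_M\sim\|\cdot\|_\mathcal{M}$, which is (ii). The main obstacle is the quasi-Fatou form of the K\"othe bidual theorem: verifying that a Banach function space whose Fatou property holds only up to equivalent norms still coincides with its K\"othe bidual as a set, with equivalent norms. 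This I would establish by the truncation argument $f_n=\min(|f|,n)\chi_{\{t\le n\}}$ applied to $f$ in the bidual, together with the observation that on bounded functions with supports of finite measure the original and bidual norms of $(M_{\varphi,w},\|\cdot\|)$ agree up to a universal constant, so that the Fatou property of $\|\cdot\|_M$ can then place $f$ in $M_{\varphi,w}$.
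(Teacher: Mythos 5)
Your argument follows the paper's route essentially step for step: (ii)$\Rightarrow$(i) from Theorem \ref{norm}, (ii)$\Leftrightarrow$(iii) from Propositions \ref{prop:equiv} and \ref{prop:equality P=M}, and (i)$\Rightarrow$(ii) via the Fatou property of $\|\cdot\|_M$, passage to an equivalent solid (lattice) norm, and identification of the K\"othe bidual of $M_{\varphi,w}$ with $\mathcal{M}_{\varphi,w}$ through Theorem \ref{th:dual} and Corollaries \ref{cor:lesnik} and \ref{cor:envelope}. The only divergence is at the weak-Fatou-to-bidual step: the paper renorms by $\trnorm{f}_2=\inf\{\lim_n\trnorm{f_n}_1:0\le f_n\uparrow|f|\}$ to recover the exact Fatou property and then invokes \cite[p.~30, Remark 2]{LT2}, whereas your truncation sketch leans on the unproved ``observation'' that the original and bidual norms already agree up to a universal constant on bounded, finitely supported functions --- that assertion is essentially the Lorentz--Luxemburg theorem you are trying to establish, so you should either cite that theorem in its weak-Fatou form or adopt the paper's renorming.
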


\begin{proof}
(ii) $\implies$ (i) is clear since $\mathcal{M}_{\varphi,w}$ is a Banach function space.

(i) $\implies$ (ii) Notice that the quasi-norm $\|\cdot\|_M$ has the Fatou property, by the general results on the symmetrization of quasi-Banach function spaces \cite[p. 279]{KR}. Thus if $\trnorm \cdot$ is an equivalent norm to $\|\cdot\|_M$ on $M_{\varphi,w}$, then $ \trnorm f _1 = \inf\{\trnorm g : |f|\le g\}$ is a norm equivalent to $\trnorm \cdot$ and so to $\|\cdot\|_1$, which preserves the order structure  of $M_{\varphi,w}$ and satisfies the isomorphic Fatou property, that is if $f_n\in M_{\varphi,w}$, $f_n \uparrow f$ a.e., and $\sup_n \trnorm {f_n} _1 < \infty$ then $f\in M_{\varphi,w}$ and $\sup_n \trnorm {f_n}_1 \le C \trnorm f _1$ for some $C>0$ depending only on the norm $\trnorm\cdot_1$. It is well known then that $M_{\varphi,w}$ can be renormed with an order compatible norm satisfying the usual (isometric) Fatou property, namely
\[ \trnorm{f}_2=\inf\{\lim_n\trnorm{f_n}_1: 0\le f_n\uparrow |f|\}\]
 (see e.g. \cite[pp. 446-452]{Z} where the isomorphic Fatou property is called the weak Fatou property). Then $(M_{\varphi,w}, \trnorm \cdot_2)$ becomes a Banach function space with the Fatou property. Since $M_{\varphi,w}$ as well as its K\"othe dual $\Lambda_{\varphi_*,w}$ contain the indicator functions of integrable sets, they become K\"othe function spaces in the sense of \cite[1.b.17]{LT2}. Hence $(M_{\varphi,w}, \trnorm \cdot_2)''= (M_{\varphi,w}, \trnorm \cdot_2)$ isometrically. By Corollary \ref{cor:envelope} we also have that $(M_{\varphi,w}, \| \cdot\|_M)'' = (\mathcal{M}_{\varphi,w}, \|\cdot\|_{\mathcal{M}})$. The equivalence of $\trnorm \cdot_M$ and $\|\cdot\|_M$ propagates to their dual and bidual norms, so finally $M_{\varphi,w} = \mathcal{M}_{\varphi,w}$ as sets and the quasinorm $\|\cdot\|_M$ is equivalent to the norm $\|\cdot\|_{\mathcal{M}}$.

Finally the equivalence (ii) $\iff$ (iii) when $\alpha_\varphi>1$ is simply Proposition \ref{prop:equality P=M}.
\end{proof}

\begin{example}
Here is an example of a decreasing weight function $w$ with $W(t) < \infty$, $t\in I$, and such that $1/w$ verifies condition $\Delta_2$ but $w$ is not regular.  Consequently if $\alpha_\varphi>1$ then the class $M_{\varphi,w}$ is linear and $\|\cdot\|_M$ is a quasi-norm, but it does not admit an equivalent norm.

Let $I=(0,1)$ and $w$ be defined by $w(t)=\max(2^{-(k+1)^2}t^{-1},2^{k^2})$ when $t\in (4^{-(k+1)^2},4^{-k^2}]$, $k=0,1,\dots$. Then $w(t)\le t^{-1/2}$ for all $t\in I$, and so $W(t)<\infty$ for $t\in I$. We also have that $w(st)\ge s^{-1}w(t)$ for all $s>1$, $t>0$ such that $st\le 1$, while $w(st_k)= s^{-1}w(t_k)$ for $t_k= 4^{-k^2}$, $k=1,2\dots$, and $1\le s\le w(t_{k})/w(t_{k-1})= 2^{2k-1}$. This implies that $\alpha_w =-1$. Thus $w$ is not regular by \cite[Lemma 6]{KMP}, while $1/w$ verifies $\Delta_2$ condition since $\beta_{1/w}= - \alpha_w = 1 < \infty$.
\end{example}

  \begin{remark}\label{rem:3}
Let's recall the space which was discussed in \cite{KM}. For an Orlicz function $\varphi$ and a weight function $w$ let  $S_{\varphi,w}$ be the space of $f\in L^0$ such that
\[
\|f\|_S = \inf\{\epsilon>0: S(f/\epsilon) \le 1\} < \infty, \ \ \ \text{where}\ \ \
S(f) = \int_I \varphi \left(\frac{\int_0^t f^*}{W(t)}\right) w(t)\,dt.
\]
Then $(S_{\varphi,w}, \|\cdot\|_S)$ is a r.i. Banach space (it is called $M_{\varphi,w}$ in \cite{KM}). It follows from \cite[Theorem 3.1]{KM} that $S_{\varphi,w}=\Lambda'_{\varphi_*w}$  with equivalent norms whenever  $\varphi$ and its complementary function
$\varphi_*$, satisfy condition $\Delta_2$ and $W(\infty)=\infty$ (in the case where the interval $I$ is infinite). However they are not equal neither to $M_{\varphi,w}$ nor $\mathcal{M}_{\varphi,w}$ without these  assumptions.

 Let's take for instance $w\equiv 1$ on $I$, and $\varphi(t) = t$, $t\ge 0$. Clearly $\varphi_*$ does not satisfy $\Delta_2$. Moreover $S_{\varphi,w} = L Log L$ if $a=1$ and $S_{\varphi,w} = \{0 \}$ in case when $a=\infty$ \cite{BS}.
However $M_{\varphi,w} = \mathcal{M}_{\varphi,w} = L_1$ with equality of norms.
\end{remark}

\end{document}